\documentclass[a4paper,twoside,reqno]{amsart}
\newcommand{\Ueberschrift}{Cubic twin prime polynomials are counted by a modular form}
\newcommand{\Kurztitel}{Cubic twin prime polynomials}
\usepackage{amsmath} \usepackage{amssymb} \usepackage{amsopn}
\usepackage{amsthm} \usepackage{amsfonts} \usepackage{mathrsfs}
\usepackage[headings]{fullpage}
\pagestyle{headings} 
\usepackage[bottom]{footmisc}
\usepackage{mathtools}
\usepackage[dvips]{graphicx} \usepackage[usenames]{color}
\usepackage[all]{xy}
\usepackage[OT2, T1]{fontenc}
\usepackage[utf8]{inputenc}
\usepackage{dsfont}
\usepackage{float}
\usepackage{booktabs}
\usepackage[pdfpagelabels, pdftex]{hyperref}
\hypersetup{
  pdftitle={},
  pdfauthor={},
  pdfsubject={},
  pdfkeywords={},
  colorlinks=true,    
  linkcolor=red,     
  citecolor=blue,     
  filecolor=blue,      
  urlcolor=blue,       
  breaklinks=true,
  bookmarksopen=true,
  bookmarksnumbered=true,
  pdfpagemode=UseOutlines,
  plainpages=false}

\DeclareMathOperator{\rH}{H}

\DeclareMathOperator{\rO}{O}

\DeclareMathOperator{\rc}{c}

\newcommand{\bA}{{\mathbb A}}

\newcommand{\bC}{{\mathbb C}}

\newcommand{\bF}{{\mathbb F}}

\newcommand{\bP}{{\mathbb P}}
\newcommand{\bQ}{{\mathbb Q}}

\newcommand{\bZ}{{\mathbb Z}}


\newcommand{\cE}{{\mathscr E}}



\newcommand{\fS}{{\mathfrak S}}

\newcommand{\fp}{{\mathfrak p}}

 
\newcommand{\inj}{\hookrightarrow}
\DeclareMathOperator{\id}{id}
\DeclareMathOperator{\pr}{pr}

\DeclareMathOperator{\Aut}{Aut}

\DeclareMathOperator{\GL}{GL}
\newcommand{\tr}{{\rm tr}} 
\newcommand{\matzz}[4]{\left(\begin{array}{cc} #1 & #2 \\ #3 & #4 \end{array} \right)}

\DeclareMathOperator{\Spec}{Spec}
\DeclareMathOperator{\Proj}{Proj}
\newcommand{\redu}{{\rm red}}
\DeclareMathOperator{\Frob}{Frob}

\newcommand{\qrB}[2]{\left\lgroup\frac{#1}{#2}\right\rgroup}
\DeclareMathOperator{\ind}{ind}
\DeclareMathOperator{\Gal}{Gal}

\newcommand{\ph}{\varphi}
\newcommand{\teta}{\vartheta}
\DeclareMathOperator{\Sym}{Sym}

\DeclarePairedDelimiter\abs{\lvert}{\rvert}
\newcommand{\ov}[1]{\mbox{${\overline{#1}}$}} 
\newtheorem{thm}{Theorem}[section]
\newtheorem{prop}[thm]{Proposition}
\newtheorem{lem}[thm]{Lemma}

\newtheorem{cor}[thm]{Corollary}
\newtheorem{conj}[thm]{Conjecture}

\theoremstyle{definition}

\theoremstyle{remark}
\newtheorem{rmk}[thm]{Remark}

\newtheorem{ex}[thm]{Example}

\newenvironment{pro*}[1][Proof]{{\it{#1:}} }{}
\newenvironment{pro**}[1][]{{\it{#1}} }{\hfill $\square$}

\numberwithin{equation}{section}
\begin{document}

\hrule width\hsize
\hrule width\hsize

\vskip 0.5cm

\title[\Kurztitel]{\Ueberschrift} 
\author{Lior Bary-Soroker}
\address{Lior Bary-Soroker, School of Mathematical Sciences, Tel Aviv University, 
Ramat Aviv, Tel Aviv 6997801, Israel}
\email{barylior@post.tau.ac.il}

\author{Jakob Stix}
\address{Jakob Stix, Institut f\"{u}r Mathematik, Goethe--Universit\"{a}t Frankfurt, Robert-Mayer-Stra{\ss}e~{6--8},
60325 Frankfurt am Main, Germany}
\email{stix@math.uni-frankfurt.de}

\thanks{The authors acknowledge support provided by DAAD-Programm 57271540 Strategische Partnerschaften (supported by BMBF). 
The first author 
was partially supported by a grant of the Israel Science Foundation.}

\date{\today} 

\maketitle

\begin{quotation} 
\noindent \small {\bf Abstract} --- 
We present the geometry lying behind counting twin prime polynomials in $\bF_q[T]$ in general. 
We compute cohomology and explicitly count points by means of a twisted Lefschetz trace formula 
applied to these parametrizing varieties for \textit{cubic} twin prime polynomials. 
The elliptic curve $X^3 = Y(Y-1)$ occurs in the geometry, and thus counting cubic twin prime polynomials involves the associated modular form. In theory, this approach can be extended to higher degree twin primes, but the computations become harder. 

The formula we get in degree $3$ is compatible with the Hardy-Littlewood heuristic on average,  agrees with the prediction for $q \equiv 2 \pmod 3$  
but shows anomalies  for $q \equiv 1 \pmod 3$. 
\end{quotation}

\section{Introduction}
One of the most exciting open problems in number theory is the \emph{twin prime conjecture},  predicting 
infinitely many twin primes, i.e., pairs $(n,n+2)$ with both $n$, $n+2$ being primes. There is nothing special about the difference being $2$ or about pairs, and indeed, one may easily state a general conjecture for $k$-tuples of the form $n+h_i$ for a fixed set of shifts $h_1 < h_2 < \ldots <h_k$.
Applications often necessitate a quantitative version of the twin prime conjecture. Hardy and Littlewood  made a precise quantitative conjecture in \cite[\S5 Theorem X{\tiny I}\footnote{Actually not a Theorem but a consequences of Hypothesis X of loc.~cit.}] 
{HardyLittlewood}, which essentially means that the events that `$n$ is prime' and `$n+2$ is prime' are independent up to a precise 
correlation factor 
$\mathfrak{S}(0,2)\approx 1.32032363169\ldots.$

\begin{conj}[The Hardy-Littlewood Prime Tuple Conjecture]\label{conj:HL}
Let $h = (h_1,\ldots, h_k)\in \bZ^k$ be a $k$-tuple of pairwise distinct integers. Then in the limit $x \to \infty$, 
\begin{equation}\label{eq:HLconjecture}
\frac{1}{x}\#\{x\leq n<2x \ : \ n+h_1,\ldots, n+h_k \mbox{ are prime}\}   = \mathfrak{S}(h) \frac{1}{(\log x)^k} (1+o(1)), 
\end{equation}
where, with $\nu_p(h)$ denoting the number of residues covered by $h_1,\ldots,h_k$ modulo $p$, we have  
\begin{equation}\label{eq:HLsing}
\mathfrak{S}(h) = \prod_{p \mbox{ \tiny prime}} \frac{1-\nu_p(h)p^{-1}}{(1-p^{-1})^k}.
\end{equation}
\end{conj}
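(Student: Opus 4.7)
The plan is to attack the conjecture through the Hardy--Littlewood circle method applied to the von Mangoldt weighted tuple sum
\[
N(x) = \sum_{x \le n < 2x} \Lambda(n+h_1)\Lambda(n+h_2)\cdots \Lambda(n+h_k),
\]
from which \eqref{eq:HLconjecture} follows by partial summation, the prime-power contribution to $N(x)$ being $O(x^{1/2}(\log x)^k)$ and therefore negligible. Writing $S(\alpha) = \sum_{x \le n < 2x}\Lambda(n)\,e(n\alpha)$ for the standard exponential sum over primes, I would express $N(x)$ as an integral over the torus $[0,1]$ of products of suitably shifted copies of $S(\alpha)$ (iteratively via Plancherel for $k\ge 2$), and decompose $[0,1] = \mathfrak{M}\sqcup \mathfrak{m}$ into major arcs, small neighborhoods of rationals $a/q$ with $q\le Q = (\log x)^B$, and their complement, the minor arcs.

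On the major arcs, I would invoke the Siegel--Walfisz theorem to approximate $S(\alpha)$ near $a/q$ as a product of a Gauss sum and a smooth model integral. Resumming over $(a,q)$ converts the resulting Ramanujan-type sums into an Euler product whose local factor at each prime $p$ equals $(1-\nu_p(h)/p)(1-1/p)^{-k}$, reproducing exactly the singular series $\mathfrak{S}(h)$ of \eqref{eq:HLsing}. Choosing $B=B(A)$ sufficiently large, the major arc contribution becomes $\mathfrak{S}(h)\, x + O(x(\log x)^{-A})$ for any $A$.

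The minor arc contribution must then be shown to be $o(x)$. For $k=1$, the classical Vinogradov estimate $\sup_{\alpha\in\mathfrak{m}}|S(\alpha)| \ll x(\log x)^{-A}$, combined with a trivial $L^2$ bound for $S$, suffices and indeed completes the proof of the three-primes theorem. For $k\ge 2$, however, one would need an analogous bound on the correlation $\sum_n \Lambda(n)\Lambda(n+h_i)\,e(n\alpha)$ uniformly in $\alpha\in \mathfrak{m}$; the naive pointwise estimate for $S$ loses an entire power of $x$ relative to the target size $\mathfrak{S}(h)\, x$, so genuine cancellation in the $k$-fold product, not merely in $S$ itself, is required.

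This is the fundamental obstacle, and it is where the plan ceases to be completable with present technology. No currently available technique---Vinogradov's type~I/II bilinear decompositions, Selberg's $\Lambda^2$-sieve (which gives only an upper bound of the conjectured order of magnitude), the Green--Tao--Ziegler Gowers-norm and nilsequence apparatus, the GPY and Maynard--Tao sieves (which detect prime tuples of bounded diameter but not with the sharp density $\mathfrak{S}(h)$), nor the Bombieri--Friedlander--Iwaniec dispersion method---produces the necessary cancellation in products of two or more von Mangoldt functions twisted by an additive character. Consequently Conjecture~\ref{conj:HL} remains open for every $k\ge 2$, and the proposal above is best regarded as a blueprint isolating the precise analytic bottleneck: a proof must supply a genuinely new input into the $\Lambda \ast \Lambda$ correlation problem on the minor arcs.
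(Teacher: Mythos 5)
This statement is Conjecture~\ref{conj:HL}, which the paper states without proof: it is the classical Hardy--Littlewood prime tuple conjecture, open for every $k\ge 2$, and the paper's actual contribution is an exact count in a function-field analogue for $d\le 3$. You correctly recognize that no proof exists and do not claim one, so there is nothing to compare against a ``paper's own proof.'' Your diagnosis of the analytic obstruction is essentially the standard one: for binary and higher correlations of $\Lambda$, the minor-arc contribution cannot be bounded below the size of the expected main term, since already $\int_0^1 |S(\alpha)|^2\,d\alpha \sim x\log x$ exceeds the target $\mathfrak{S}(h)\,x$, so pointwise savings on $S$ alone (Vinogradov) are useless and one would need cancellation in the correlation sums themselves --- which is precisely what no current method (sieves, Gowers norms, dispersion) supplies.

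Two small corrections to your sketch. First, for $k=1$ the conjecture has nothing to do with the three-primes theorem: $\nu_p(h)=1$ for all $p$, so $\mathfrak{S}(h)=1$ and the statement reduces to the prime number theorem for the interval $[x,2x)$; the ternary Goldbach problem is the canonical \emph{success} of the major/minor arc dichotomy you describe (three copies of $S$ allow $\sup_{\mathfrak{m}}|S|\cdot\|S\|_2^2$), but it is a different statement. Second, your major-arc computation of the local factors is the right heuristic and does reproduce \eqref{eq:HLsing}, but it should be presented as the origin of the singular series rather than as a proved asymptotic, since the major-arc ``main term'' is only meaningful once the minor arcs are controlled. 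With those caveats, your assessment --- that the conjecture is open and that the bottleneck is cancellation in $\Lambda*\Lambda$ correlations twisted by additive characters --- is accurate and consistent with the paper's framing, which is exactly why the authors pass to $\bF_q[T]$ where geometric and cohomological tools become available.
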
 

Note that a \emph{local obstruction} occurs, if there is  a prime $p$ with $\nu_p(h)=p$. 
Then the conjecture trivially holds, because both sides of \eqref{eq:HLconjecture} are zero.
If there are no local obstructions, 
the singular series $\mathfrak{S}(h)$ may be shown to converge to a positive constant.

In the course of history there were many attempts to solve this conjecture that, in spite of failing, produced exciting results. In his seminal work, Brun \cite{Brun} was the first to apply sieve methods to the twin prime problem and showed that the sum of reciprocals of twin primes converges. Brun also showed that there are infinitely many pairs $(n,n+2)$ with at most $16$ prime factors. Chen \cite{Chen} gave the state-of-the-art result in this direction proving that there exist infinitely many primes $p$ such that $p+2$ is a product of at most $2$ primes. In their famous theorem about arithmetic progression of primes, Green and Tao \cite{GreenTao} and Green, Tao, and Ziegler \cite{GTZ} tackle the easier problem when one more degree of freedom is added.  More recently, Zhang \cite{Zhang} made a breakthrough, by showing that there exist infinitely many prime pairs  with bounded distance. The bound was improved by Polymath8 \cite{Polymath} and later Maynard \cite{Maynard} generalized Zhang's result to $k$-tuples, for any fixed $k>0$. See the survey paper \cite{Granville} for more details. 

\medskip

The objective of this paper is to give more evidence to the Hardy-Littlewood prime tuple conjecture by studying its function field analog. 
For a finite field $\mathbb{F}_q$ 
we take the set 
\[
M_{d,q} = \{ f(T) \in \bF_q[T] \ ; \ \text{ monic of } \deg(f) = d\}
\]
as the analog of the interval $[x,2x)$ with $q^d =\#M_{d,q}$ playing the role of $x\sim\#\{x\leq n<2x\}$. The goal is now to give for a $k$-tuple of pairwise distinct polynomials $h=(h_1,\ldots, h_k)\in \bF_q[T]$ with $\deg h_i<d$, precise asymptotics in the limit $q^d \to \infty$ for 
\[
\pi(d,q;h) = \#\{f \in M_{d,q} \ ; \ f+h_1, \ldots f+h_k \text{ are irreducible}\}.
\]
If we fix $h$, this only makes sense with fixed $q$ and $d \to \infty$, and this case should be considered an analog of Conjecture~\ref{conj:HL} for the field $\bF_q(T)$. If $h = (0,1)$, or if we expect some uniformity in $h$, then we can also ask for the asymptotics for $d$ fixed and $q \to \infty$. 

\medskip

Hall \cite{Hall} proved that for any fixed $q>3$ there exist infinitely many $d$'s and $f\in M_{d,q}$ such that $f$ and $f+1$ are prime. This proof is elementary, and extends to $k$-tuples with scalar shifts if $q$ is sufficiently large with respect to $k$ and using the Riemann hypothesis for curves. The sequence of $d$'s in Hall's proof grows exponentially fast and hence is very sparse. Adapting Maynard's result to function fields, Castillo et al \cite{Castillo} extend Hall's result for sufficiently large fixed $q$, to any $d$ in an explicit arithmetic progression and give a lower bound on the number of pairs of the right order of magnitude. However these method seems to fail when the shift is not a monomial.

Another approach taken in recent studies is to let $q\to \infty$ and control $\pi(d,q;h)$ as an 
application of the Lang-Weil bounds. This direction proved successful as the analogue of the Hardy-Littlewood prime tuple conjecture was completely resolved: Bender and Pollack \cite{Bender}  for pairs in odd characteristic, the first author \cite{BarySoroker} for general tuples in odd characteristic, and Carmon \cite{Carmon} applies the method of the first author in even characteristic, and establishes the following in all cases in the limit $q \to \infty$: 
\begin{equation}\label{HLlargeqlimit}
\pi(d,q;h) =\frac{q^d}{d^k} \mathfrak{S}_q(h)(1+E(d,q;h)),  \qquad \text{ with error term } \quad  E(d,q;h) \ll_{d,k} q^{-1/2}. 
\end{equation}
Note that the size of the error term depends on the tuple $h$ only through its degree bound $d$. 
We discuss the function field analog $\mathfrak{S}_q(h)$ of the Hardy-Littlewood singular series in Section~\S\ref{sec:sing series}. 

One may naively expect a square root cancelation, namely a bound for the error term of the form 
\[
E(d,q;h)  \ll q^{-d/2+\epsilon}.
\] 
Some results give better error terms than \eqref{HLlargeqlimit} on average \cite{KeatingRodittyGershon} and for special $h$ \cite{GorodetskySawin} based on variance computations \cite{KeatingRudnick} and equidistribution results by Katz \cite{Katz1,Katz2}. 
See \cite{HastMatei} for geometric interpretation of \cite{KeatingRudnick}. 
The asymptotic formula \eqref{HLlargeqlimit} has inspired more work, see \cite{ABSR,BBS,BSF,Entin}.
The downside of \eqref{HLlargeqlimit}, is that the error term is not small enough for us to see the `arithmetic' due to the dependence on $h$. Pollack \cite[Appendix]{Pollack} computed that 
\[
\mathfrak{S}_q(h) = 1+O(q^{-1}),
\]
so in the limit $q\to \infty$ the events that the $f+h_i$ are irreducible are indeed independent. 

\medskip

In this paper we give an exact formula for $\pi(d,q;h)$ when $d\leq 3$ and $h = (0,1)$. The case of $d=1$ is trivial and left to the reader.
The case $d=2$ is straight forward using Weil's bounds on exponential sums. We use the technique of the present paper to deduce the $d=2$ case in Example~\ref{ex:caseD=2} as \eqref{eq:caseD=2}:
\[
\pi(2,q;(0,1))  = \frac{1}{4}q^2 \cdot
\begin{cases}
 1- (2 - (-1)^{(q-1)/2} 
 ) \cdot q^{-1}  &  \text{ if $q$ is odd}, \\
 1- 2q^{-1} &  \text{ if $q$ is even}.
\end{cases}
\]

For $d=3$, the formula depends on whether or not the characteristic is $3$. If $3\mid q$ we get a polynomial formula, while if $3\nmid q$, our formula involves the number of rational points in the reduction of the elliptic curve $\cE$ with Weierstra\ss -equation 
\[
X^3 = Y(Y-1).
\]
Recall that if $3\nmid q$, the curve $\cE$ has good reduction and by the Hasse bound
\begin{equation}\label{eq:HB}
\abs{q+1-\#\cE(\bF_q) } \leq 2\sqrt{q}.
\end{equation}

\begin{thm}
\label{thm:main}
Let $q$ be a prime power.
Then, we have
\begin{equation}\label{eq:MTmod3version}
\pi(3,q;(0,1)) = \frac{1}{9} q^3 \cdot
\begin{cases}
1 - q^{-1} - 3q^{-2} & \mbox{if } q \equiv 0 \pmod 3,\\[1ex]
1 + \left(c^2_q - 3\right) \cdot q^{-1} -2q^{-2} &  \mbox{if } q \equiv 1 \pmod 3,\\[1ex]
1 - q^{-1} - 2q^{-2} & \mbox{if } q \equiv 2 \pmod 3,
\end{cases}
\end{equation}
where $c_q := \frac{1+ q - \#\cE(\bF_q))}{\sqrt{q}}$,
so that $0 \leq c^2_q\leq 4$ by \eqref{eq:HB}.
\end{thm}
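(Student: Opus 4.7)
\begin{pro*}[Proof proposal]
The plan is to apply inclusion-exclusion, using that a monic cubic over $\bF_q$ is irreducible if and only if it has no $\bF_q$-root. Writing $\mathbf{1}[\cdot]$ for the indicator, expanding
\[
\prod_{a\in\bF_q}\bigl(1-\mathbf{1}[f(a)=0]\bigr)\prod_{b\in\bF_q}\bigl(1-\mathbf{1}[(f+1)(b)=0]\bigr)
\]
and summing over $f\in M_{3,q}$ yields
\[
\pi(3,q;(0,1)) = \sum_{\substack{A,B\subset\bF_q\\ A\cap B = \emptyset}} (-1)^{|A|+|B|} N(A,B),\qquad N(A,B) := \#\{f\in M_{3,q}: f|_A = 0,\ (f+1)|_B = 0\},
\]
where we restrict to $A\cap B=\emptyset$ because $f(a)=0$ and $(f+1)(a)=0$ cannot hold simultaneously. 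I then organise the sum by $k := |A|+|B|$.

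For $k \le 3$, Lagrange interpolation in the three-dimensional $\bF_q$-space of monic cubics immediately gives $N(A,B)=q^{3-k}$, and the pairs $(A,B)$ of total size $k$ number $\binom{q}{k}2^k$; these contributions amount to an explicit polynomial in $q$. For $k\ge 5$ the prescribed conditions are over-determined and I would enumerate the short list of admissible shapes, obtaining only low-order corrections. The heart of the matter is the range $k=4$, where the unique Lagrange interpolant of degree $\le 3$ through the four data points gives $N(A,B)\in\{0,1\}$, the value being $1$ exactly when its leading coefficient equals $1$; only the shapes $(|A|,|B|)\in\{(3,1),(1,3),(2,2)\}$ remain.

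For shape $(3,1)$ the leading-coefficient condition is $\prod_i(b-a_i)=-1$, and after the substitution $x_i=b-a_i$ this becomes a count of distinct ordered triples in $(\bF_q^\times)^3$ with product $-1$, i.e.\ a point count on a subtorus of $\bG_m^3$. Using the cubic residue character together with elementary Gauss-sum manipulations I evaluate this count explicitly; the answer depends on whether $3\mid q-1$, which is the source of the split between $q\equiv 1$ and $q\equiv 2\pmod 3$ (together with the separate, elementary computation when $\mathrm{char}\,\bF_q=3$). Shape $(1,3)$ reduces to $(3,1)$ via $x\mapsto -x$.

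Shape $(2,2)$ is where $\cE:X^3=Y(Y-1)$ enters the picture. A short partial-fraction calculation reduces the leading-coefficient condition to
\[
(b_1-a_1)(b_1-a_2)(b_2-a_1)(b_2-a_2)=(b_1+b_2)-(a_1+a_2),
\]
cutting out a threefold in $\bA^4$ that is invariant under a translation $\bG_a$-action and under the $S_2\times S_2$-symmetry swapping $a_1\leftrightarrow a_2$ and $b_1\leftrightarrow b_2$. My plan is to produce an explicit birational model exhibiting the quotient of this threefold by translation as an elliptic surface whose generic fibre is a quadratic twist of $\cE$, and then to extract the trace of Frobenius on its first $\ell$-adic cohomology by the Lefschetz trace formula. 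The main obstacle I anticipate is precisely here: controlling the singular fibres, the $S_2\times S_2$-fixed loci, and the boundary strata carefully enough that no lower-order term is dropped. Once this is in hand, reassembling all shapes, invoking the Hasse bound for $\cE$ to introduce $c_q$, and splitting on $\mathrm{char}\,\bF_q$ and $q\bmod 3$ should deliver the three cases of the theorem; the supersingular vanishing of $a_q$ when $q\equiv 2\pmod 3$ explains why $c_q$ is absent there.
\end{pro*}
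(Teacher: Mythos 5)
Your starting point is genuinely different from the paper's. The paper parametrizes roots of twin prime pairs by a variety $U$ with an $S_3$-action, twists by the $3$-cycle $\tau$, and evaluates $\tr(\tau F\,|\,\rH^\ast_{\rc}(U))$ after identifying a compactification of the base of an $\bA^1$-bundle $U\to V$ with a bielliptic surface covered by $E\times E$, $E:\,w^3=uv(u-v)$. You instead use the fact that a cubic is irreducible iff it has no $\bF_q$-root, expand the indicator products, and reduce to counts $N(A,B)$ by Lagrange interpolation, organised by $k=\abs{A}+\abs{B}$. That inclusion--exclusion skeleton is valid (the disjointness constraint is correct, and for $k\le 3$ indeed $N(A,B)=q^{3-k}$), and your identity for the $(2,2)$ shape, $(b_1-a_1)(b_1-a_2)(b_2-a_1)(b_2-a_2)=(b_1+b_2)-(a_1+a_2)$, checks out. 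You are also right that the curve $\cE$ must enter through this shape. So this is a legitimate alternative route in outline.

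However there are real gaps, and the most serious one is exactly where you locate the difficulty. First, the $(2,2)$ count is not actually carried out, and your stated plan would not produce the right shape of answer. The term $c_q^2\cdot q^{-1}$ in \eqref{eq:MTmod3version} is a \emph{weight-two, tensor-square} quantity: $q\,c_q^2-2q=\alpha^{2e}+\beta^{2e}$ with $\alpha,\beta$ the Frobenius eigenvalues on $\rH^1(\cE_{\bar\bF_q})$. In the paper it arises as the trace of Frobenius on the $\mu_3$-invariant block $(\rH_\alpha\otimes\ov{\rH}_\alpha)\oplus(\ov{\rH}_\beta\otimes\rH_\beta)\subset \rH^2(E\times E)$ (Lemma~\ref{lem:cohEtimesEmu3}, Proposition~\ref{prop:traceFEtimesEmu3}). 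Your plan to ``extract the trace of Frobenius on its first $\ell$-adic cohomology'' of the surface cannot yield this: $\rH^1$ of the relevant surface is weight one (and for a non-isotrivial-looking fibration over $\bA^1$ it is essentially trivial), the Lefschetz formula requires the full alternating sum of $\rH^i_{\rc}$, and the contribution you need lives in $\rH^2$. Second, ``elliptic surface whose generic fibre is a quadratic twist of $\cE$'' is imprecise in a way that matters. If you fibre over the natural translation-invariant parameter $s=a_2-a_1$ the fibre is \emph{not} a twist of $\cE$ (at $s=1$, after eliminating $Y_1+Y_2$, it becomes $w^2=p^4-2p^3-5p^2-2p+1$, whose $j$-invariant is nonzero). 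If instead you fibre over $u=(b_1+b_2)-(a_1+a_2)$ you do get $j=0$ fibres, but they are \emph{cubic/sextic} twists varying with $u$, and summing $\#\cE_{t(u)}(\bF_q)$ over a varying family of cubic twists is itself a substantial sub-problem that your sketch does not address. Finally, the shapes with $k\ge 5$ (namely $(2,3),(3,2),(3,3)$) are acknowledged but not estimated; the $(3,3)$ shape counts ordered disjoint pairs of fully-split monic cubics differing by $1$ with distinct roots, which is again not a ``low-order correction'' you can wave away. In short, the decomposition is sound and the appearance of $\cE$ is correctly identified, but the mechanism by which $c_q^2$ (rather than $c_q$) emerges is not in place, and the remaining shapes are not controlled, so as written this is a plan, not a proof.
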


We will discuss the comparison with the prediction by the analog of the Hardy-Littlewood conjecture in Section~\S\ref{sec:sing series}. 

\begin{rmk}
Since all elliptic curves over $\bQ$ are modular, there is a newform  $f_\cE(q)$ associated to the elliptic curve $\cE$ with Weierstra\ss -equation $X^3 = Y(Y-1)$. The newform can easily be determined being the only newform of weight 2 and level $27$, see the database 
\cite[\href{http://www.lmfdb.org/EllipticCurve/Q/27/a/4}{Elliptic Curve 27.a4}]{LMFDB},  
\[
f_\cE(q) = \sum_{n \geq 1} a_n q^n = q - 2q^{4} - q^{7} + 5q^{13} + 4q^{16} - 7q^{19} + O(q^{20}).
\]
This means that for all prime numbers $p$ not divisible by $3$ we have by the Eichler-Shimura congruence relation 
\[
a_p = 1+p - \# \cE(\bF_p) = \alpha_p + \beta_p,
\]
where $\alpha_p$, $\beta_p$ are the eigenvalues of Frobenius, i.e., the roots of 
\[
T^2 - a_p T + p = 0.
\]
For both $\lambda = \alpha_p$ or $\beta_p$, the sequence $(\lambda^m)_m$ belongs to the $2$-dimensional vector space $V$ of all sequences $(b_m)_m$ satisfying 
\[
b_{m+1} = a_p b_m - p b_{m-1}.
\]
Moreover, for $3 \nmid p$, the coefficients of $f_\cE(q)$ satisfy the recursion $a_{p^{m+1}} = a_p a_{p^m} - p a_{p^{m-1}}$ for all $m \geq 1$ due to the relations among the various Hecke operators, see for example \cite[\S VII.5.4]{serre-coursearithmetic}.
Hence $(a_{p^m})_m$ and $(a_{p^{m-1}})_m$ (the latter suitably interpreted as $0$ for $m=0$) also belong to $V$. It follows easily that for $q = p^m$ and all $m \geq 1$
\[
1 + q - \#{\cE(\bF_{q})} = (\alpha_p)^m + (\beta_p)^m = 2a_{p^m} - a_p a_{p^{m-1}} = 2a_q - a_p a_{q/p}.
\]
In conclusion the term $c^2_q$ in Theorem~\ref{thm:main} can be computed as 
\[
c^2_q = \frac{1}{q} \cdot (2a_q - a_p a_{q/p})^2
\]
as a combination of $q$-expansion coefficients of the newform $f_{\cE}(q)$. 
\end{rmk}

The geometric description of the parametrizing variety and thus also the computation of its cohomology can be adapted easily to the case of general scalar shifts $h \in \bF^\times_q$. The resulting fomula involves the cubic twist $\cE_h/\bF_q$ of the elliptic curve $\cE \otimes_{\bZ[1/3]} \bF_q$ over $\bF_q$ given by 
\[
hX^3 = Y ( Y-1).
\]
We set $c_{q,h} := \frac{1+ q - \#\cE_h(\bF_q)}{\sqrt{q}}$, so that $0 \leq c^2_{q,h} \leq 4$ by \eqref{eq:HB}. Note also that $c_{q,h}$ only depends on $h$ modulo cubes: the class of $h$ in $\bF_q^\times/(\bF_q^\times)^3$.

\begin{thm}
\label{thm:main_twisted}
Let $q$ be a prime power. For all $h \in \bF_q$, $h \not= 0$, we have
\begin{equation}
\label{eq:MTmod3version_twisted}
\pi(3,q;(0,h)) = \frac{1}{9} q^3 \cdot
\begin{cases}
1 - q^{-1} - 3q^{-2} & \mbox{if } q \equiv 0 \pmod 3,\\[1ex]
1 + \left(c^2_q  - 3\right) \cdot q^{-1} -2q^{-2} &  \mbox{if } q \equiv 1 \pmod 3 \mbox{ and $h$ is a cube in } \bF_q, \\[1ex]
1 + c_q \cdot c_{q,h} \cdot q^{-1} - 8q^{-2} &  \mbox{if } q \equiv 1 \pmod 3 \mbox{ and $h$ is not a cube in } \bF_q, \\[1ex]
1 - q^{-1} - 2q^{-2} & \mbox{if } q \equiv 2 \pmod 3.
\end{cases}
\end{equation}
\end{thm}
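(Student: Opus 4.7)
\begin{pro*}[Proof plan]
The strategy is to reduce the theorem, as far as possible, to Theorem~\ref{thm:main} via a rescaling argument, and then to extend the underlying \'etale-cohomological computation to treat the one genuinely new case.

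First, a rescaling reduction. Suppose $h = m^3$ is a cube in $\bF_q^\times$. The substitution $T \mapsto T/m$ followed by multiplication by $m^3$ defines a bijection $f \leftrightarrow g$ on monic cubics in $\bF_q[T]$ that preserves irreducibility and sends $f + h$ to $g + 1$. Hence $\pi(3, q; (0, h)) = \pi(3, q; (0, 1))$ whenever $h$ is a cube in $\bF_q$. This immediately handles the first row of \eqref{eq:MTmod3version_twisted} (for $q \equiv 0 \pmod 3$ cubing equals Frobenius, so every element is a cube), the fourth row (for $q \equiv 2 \pmod 3$ we have $\gcd(3, q-1) = 1$, so cubing is bijective on $\bF_q^\times$), and the second row (the case $q \equiv 1 \pmod 3$ with $h$ a cube is handled directly). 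In each of these sub-cases the formula in \eqref{eq:MTmod3version_twisted} reduces to the corresponding row of \eqref{eq:MTmod3version}, i.e.~to Theorem~\ref{thm:main}.

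The remaining case is $q \equiv 1 \pmod 3$ with $h$ non-cube, and here I would rerun the proof of Theorem~\ref{thm:main} while carrying $h$ as a parameter throughout. The elliptic curve $\cE\colon X^3 = Y(Y-1)$ enters the untwisted computation through a discriminantal relation that, with $h$ reinstated, reads $hX^3 = Y(Y-1)$, i.e., it becomes the cubic twist $\cE_h$ by definition. On each stratum of the parametrizing variety the Frobenius action on \'etale cohomology is twisted by the cubic character class of the corresponding polynomial. The essential new feature, responsible for the $q^{-1}$-coefficient, is a middle-cohomology contribution that simultaneously involves the irreducibility obstruction for $f$ (governed by the trivial cubic class, hence by $\cE$) and for $f + h$ (governed by the class of $h$, hence by $\cE_h$). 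A K\"unneth-style cup-product computation then produces the Frobenius trace
\[
\frac{(1 + q - \#\cE(\bF_q)) \cdot (1 + q - \#\cE_h(\bF_q))}{q} = c_q \cdot c_{q,h},
\]
as the coefficient of $q^{-1}$. A combinatorial count of the remaining diagonal strata, whose structure genuinely differs from the cube case because the two cubic classes are no longer equal, yields the $-8 q^{-2}$ correction in place of the $-2 q^{-2}$ of the cube case.

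The hard part will be justifying with the necessary precision that the Frobenius representation on this middle-cohomology piece is in fact the tensor product of the Tate-module representations of $\cE$ and $\cE_h$, rather than some symmetric square of a single representation: this requires exhibiting the relevant cohomology class as an external cup product of classes coming from $\cE$ and from $\cE_h$ separately, which in turn means cleanly decomposing the $h$-dependence of the defining equations into a factor governed by the trivial cubic character and a factor governed by the cubic character of class $h$. A secondary subtlety worth flagging is the combinatorial difference between the cube and non-cube cases: for $h$ a cube certain diagonal strata that are separate when $h$ is non-cube collapse into a single stratum, accounting both for the extra $-3 \cdot q^{-1}$ term in the second row of \eqref{eq:MTmod3version_twisted} and for the shift from $-8$ to $-2$ in the $q^{-2}$ coefficient. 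Keeping this combinatorial bookkeeping consistent with the rescaling reduction is where the computation demands the most care.
\end{pro*}
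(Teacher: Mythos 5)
Your rescaling observation is correct and is a genuinely more elementary route to three of the four cases: for $h = m^3$ a cube, the bijection $f \mapsto m^{-3}f(mT)$ on monic cubics carries $(f, f+h)$ to $(g, g+1)$ with both constituents irreducible, so $\pi(3,q;(0,h)) = \pi(3,q;(0,1))$ reduces those cases directly to Theorem~\ref{thm:main}. The paper instead reruns the full cohomological computation with the twisted varieties $U_h$, $Y_h$, $E_h$ throughout, and only short-circuits to Theorem~\ref{thm:main} when $3 \mid q$, using inseparability of the $\mu_3$-torsor rather than your change of variables. Your route buys a shorter proof for three of the four rows; the paper's uniform computation is needed anyway for the remaining one.

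For the noncube case, however, there is a real flaw in the plan as written. Away from finitely many points, $Y_h$ is the quotient of $E \times E_h$ by the \emph{antidiagonal} $\mu_3$-action, so the middle cohomology entering the Frobenius trace is not the full tensor product $\rH^1(E)\otimes\rH^1(E_h)$ but its $\mu_3$-invariants, a $2$-dimensional subspace. The naive K\"unneth trace you write, $c_q c_{q,h}\, q = (\alpha^e+\beta^e)(\zeta\alpha^e+\zeta^2\beta^e)$, is the trace on the full $4$-dimensional tensor product; feeding that into the bookkeeping yields a final $q^{-1}$-coefficient of $c_q c_{q,h}-1$, not $c_q c_{q,h}$. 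The correct trace on the $\mu_3$-invariant piece is $\zeta\alpha^{2e}+\zeta^2\beta^{2e}=q(c_q c_{q,h}+1)$, where $\zeta\not=1$ is the cubic class of $h$; the extra $+1$ then cancels against the $(1+q)^2$ and $-3(1+q)$ contributions to leave $c_q c_{q,h}$. So the dichotomy you flag, ``tensor product versus symmetric square,'' is not the right one; the subtlety is the cut-down to $\mu_3$-invariants, and skipping it produces a formula off by $q$ at the middle-cohomology stage. Relatedly, you attribute both the change in the $q^{-1}$-coefficient and the shift $-2q^{-2}\leadsto -8q^{-2}$ to the diagonal strata, but these have distinct sources: the $q^{-1}$ change is entirely a middle-cohomology effect, while the $q^{-2}$ shift comes from a separate $\tau F$-fixed-point count on the triple-intersection locus $H_{3,h}$, where six fixed points appear precisely when $h$ is a noncube.
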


On avarage over all $h \in \bF_q^\times$ we obtain the predicted asymptotic up to $\rO(q^{-2})$ in the error term:
\[
\frac{1}{q-1} \sum_{h \in \bF_q^\times} \pi(3,q;(0,h)) =
\frac{1}{9} q^3 \cdot
\begin{cases}
1 - q^{-1} - 3q^{-2}, & \mbox{if } q \equiv 0 \pmod 3,\\[1ex]
1 - q^{-1} - 6q^{-2}, &  \mbox{if } q \equiv 1 \pmod 3,\\[1ex]
1 - q^{-1} - 2q^{-2}, & \mbox{if } q \equiv 2 \pmod 3.
\end{cases}
\]

\smallskip

\textbf{Method.} The proof of Theorem~\ref{thm:main} will be given at the end of 
Section~\S\ref{sec:proof of main thm} when enough of the geometry and cohomology of the problem has been explained. It then follows by combining the twisted Lefschetz trace formula of Proposition~\ref{prop:LTF}, which reduces the counting to the problem of determining the effect of Frobenius and the twisting $3$-cycle on cohomology,  with the explicit computation of the respective traces that is completed in Proposition~\ref{prop:traceFU}.

We will study the $\bF_q$-variety parametrizing pairs of algebraic elements over $\bF_q$ of general degree $d$ such that the respective minimal polynomials differ by $1$ . We count these pairs by means of the Lefschetz trace formula in \'etale cohomology. This strategy is applicable in principle for all $d$, but we succeed to pursue it only for $d\leq 3$ because here we manage to compute the respective cohomology as a representation. To summarize, the geometry for $d=3$ is as follows.

\begin{thm}
Let $q$ be a prime power and $3 \nmid q$. 
Roots of Cubic twin prime polynomial pairs in $\bF_q[T]$ are parametrized by a variety that is an $\bF_q$-form of a variety which is an $\bA^1$-bundle over the complement of an explicit divisor on an explicit hyperelliptic surface.
\end{thm}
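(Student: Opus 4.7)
I would realize the parametrizing variety as an explicit affine subscheme, extract the $\bA^1$-bundle structure from a translation symmetry, and present the base as a double-double cover of a rational surface that compactifies to the hyperelliptic surface of the statement.

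The natural parametrizing variety is
\[
X := \bigl\{(x_1, x_2, x_3, y_1, y_2, y_3) \in \bA^6 :\ \sigma_1(x) = \sigma_1(y),\ \sigma_2(x) = \sigma_2(y),\ \sigma_3(x) - \sigma_3(y) = 1 \bigr\},
\]
whose points correspond to ordered root 6-tuples of monic cubics $f = \prod(T-x_i)$ and $g = \prod(T-y_j) = f+1$; a Jacobian computation shows $\dim X = 3$. The group $\bG_a$ acts freely on $X$ by simultaneous translation $c\cdot(x_i, y_j) = (x_i+c, y_j+c)$, preserving all three defining equations precisely because $3 \nmid q$; the inverse of the action on any point is $c = -\sigma_1(x)/3$. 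The quotient map $X \to V_0 := X/\bG_a$ is thus a $\bG_a$-torsor, hence an $\bA^1$-bundle, and the base $V_0$ is naturally identified with the affine slice $\{\sigma_1(x) = \sigma_1(y) = 0\} \subset X$.

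To describe $V_0$, eliminate $x_3 = -x_1-x_2$ and $y_3 = -y_1-y_2$ and pass to the pairwise symmetric coordinates $U = x_1+x_2$, $V = x_1x_2$, $U' = y_1+y_2$, $V' = y_1y_2$. The residual equations become $V - V' = U^2 - U'^2$ and $U'V' - UV = 1$, which solve uniquely for $V$ and $V'$ as explicit rational functions of $(U, U')$ with a simple pole along the diagonal $U=U'$. Hence $V_0/(S_2 \times S_2) \cong \bA^2 \setminus \Delta$, and $V_0$ itself is the $(\bZ/2)^2$-cover
\[
s^2 = U^2 - 4V(U, U'), \qquad t^2 = U'^2 - 4V'(U, U'),
\]
where $s = x_1 - x_2$ and $t = y_1 - y_2$. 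This realizes $V_0$ as a smooth affine surface that is simultaneously a double cover of two rational surfaces.

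The remaining and most delicate step is to compactify $V_0$ to a smooth projective surface $S$, verify that $S$ is hyperelliptic, and pin down the boundary divisor $D$. A natural compactification of $\bA^2 \setminus \Delta$ (for instance in $\bP^1 \times \bP^1$, with blow-ups resolving the diagonal and the locus at infinity) extends the $(\bZ/2)^2$-cover to a smooth projective surface $S$; the invariants $p_g$, $q$, $K_S^2$ computed from the explicit equations should place $S$ in the hyperelliptic class. The projection induced by $m := V = V'$ yields an elliptic fibration on $S$ whose generic fiber is a double cover of $\bP^1$ branched at the four points provided by the two discriminants $U^2 - 4V$ and $U'^2 - 4V'$, and matching this fiber with $\cE : X^3 = Y(Y-1)$ (up to twist) is the source of $\cE$ in the cohomological calculation underlying Theorem~\ref{thm:main}. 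The $\bF_q$-form aspect finally records that all of this descends to $\bF_q$ together with Galois twisting on the $(\bZ/2)^2$-cover and on the $\bG_a$-torsor---exactly the descent data required by the twisted Lefschetz trace formula in Proposition~\ref{prop:LTF}. The main obstacle is this last step: producing the explicit compactification, verifying the hyperelliptic classification while tracking numerical invariants through the blow-ups, identifying $D$, and recognizing $\cE$ as the generic fiber simultaneously with the Galois descent.
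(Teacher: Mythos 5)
Your $\bA^1$-bundle extraction is essentially the same idea as the paper's: translation $(x_i,y_j)\mapsto(x_i+c,y_j+c)$ preserves the defining equations, and quotienting by $\bG_a$ (or, as the paper does it, projecting away the linear coordinate not lying in the span $N$ of the differences $x_i-y_j$ and $z$) gives the line bundle $U\to V$. That part is sound.

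The gap is in the second half, and it is not a small one. After the $\bG_a$-quotient you immediately switch to a \emph{quadratic} perspective: you eliminate down to $(U,U')$ and reconstruct $V_0$ as a $(\bZ/2)^2$-cover via $s^2 = U^2-4V$, $t^2 = U'^2-4V'$. This packaging is tailored to degree $2$ — it encodes the separability of $f$ and $g$ via the two discriminants — but it hides the structure that makes the base a hyperelliptic surface, which is inherently \emph{cubic}. The paper instead projects once more to $\bP(L)$ and observes that the equation $\sigma_3(\underline x)-\sigma_3(\underline y)=z^3$ becomes $z^3=(a-c)b(d-c)$ on the quadric $Z=V(ad-bc)\cong\bP^1\times\bP^1$, in bihomogeneous coordinates $z^3 = uv(u-v)\cdot rs(s-r)$. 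This exhibits $Y$ as a ramified $\mu_3$-cover of $\bP^1\times\bP^1$ dominated by $E\times E$, where $E:\,w^3=uv(u-v)$ (Weierstra{\ss} form $X^3=Y(Y-1)$), with $Y=(E\times E)/\mu_3$ for the antidiagonal action — exactly the classical construction of a hyperelliptic (bielliptic) surface, birationally. Your $(\bZ/2)^2$ description never surfaces any $\mu_3$-symmetry, so there is no route from it to $E\times E/\mu_3$.

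Concretely, two things would go wrong if you tried to finish your route. First, your proposed elliptic fibration (double cover of $\bP^1$ branched at the four roots of the two discriminants) has no reason to be isotrivial: a double cover of $\bP^1$ branched at four points that vary with the base has varying $j$-invariant, whereas the two elliptic fibrations of a hyperelliptic surface are isotrivial (here, constant fiber $E$, which has $j=0$). The paper's model makes the two constant-$j$ fibrations visible at once as the two projections $E\times E/\mu_3\to E/\mu_3\cong\bP^1$. Second, the whole point of identifying the geometry is to track the action of the $3$-cycle $\tau\in A_3$ needed for the twisted Lefschetz trace formula (Proposition~\ref{prop:LTF}); the paper does this by showing $\tau$ acts on $Z\cong\bP^1\times\bP^1$ as $\tau\times\id$ and lifts to translation by a $3$-torsion point on the first $E$-factor. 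In your $(U,U')$ and $s,t$ coordinates this $\tau$-action becomes intractable because it mixes $(x_1,x_2)$ with $x_3$ (i.e.\ it is not compatible with your choice of two-element blocks), so you could not carry the descent data you need through your construction even if the compactification worked out. Your own closing paragraph correctly identifies the compactification and classification as "the main obstacle," but the obstacle is not merely computational: the quadratic framing is the wrong lens for this problem.
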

\begin{proof}
This is the geometric content of Section~\S\ref{sec:geometry}. The hyperelliptic surface is covered by $E \times E$ for the elliptic curve $E: X^3 = Y(Y-1)$, and the divisor to  be removed is $T + H$ in the notation of loc.~cit.
\end{proof}

\textbf{Outline.} In Section~\S\ref{sec:parametervariety} we recall how to count rational $\bF_q$-points of a Galois-twisted variety and describe the variety $U_\tau$ over $\bF_q$ that enumerates twin prime polynomial pairs of any degree $d$ over $\bF_q$. In 
Section~\S\ref{sec:geometry} we analyse its geometry in the case $d=3$ with special emphasis on the action of a $3$-cycle $\tau$. 
Section~\S\ref{sec:cohomology} is devoted to the computation of (total) \'etale cohomology with compact support as a virtual representation in a Grothendieck-group of Galois representations together with an action by $\tau$. In the penultimate Section~\S\ref{sec:proof of main thm} we then evaluate the trace of the twisted Frobenius and derive the formula of 
Theorem~\ref{thm:main}.

\section{The singular series for function fields}
\label{sec:sing series}
Since we are interested in studying the number of prime polynomials of fixed degree $d$ as a function of the number of elements in the field $q$, we denote by $\pi_d(q) = \pi(d,q;(0))$ the number of primes of $\bF_q[T]$ of degree $d$. 
We would like to express the Hardy-Littlewood singular series  for the function field $\bF_q(T)$ 
\[
\fS_q((0,1)) = \prod_{\fp \mbox{ \tiny prime of } \bF_q[T]} \left(\frac{1-2N(\fp)^{-1}}{(1-N(\fp)^{-1})^2}\right) = \prod_{d \geq 1}  \left(\frac{1-2q^{-d}}{(1-q^{-d})^2}\right)^{\pi_d(q)}
\]
as the value of a convergent power series $S(u) \in \bQ[[u]]$ in $u = q^{-1}$. Here $S(u)$ shall be independent of $q$ and thus is uniquely determined if it exists. This can be done as follows. 
First, for $q = 2$ we have a local obstruction at places of degree $1$, hence $\fS_2((0,1)) = 0$.  Therefore we assume from now on that $q \geq 3$. 

\begin{lem}
\label{lem:gauss}
For all  $d \geq 1$ there is a polynomial $P_d(u) \in \bQ[u]$ such that for all prime powers $q$
\[
P_d(q^{-1}) = \pi_d(q) \cdot q^{-d}.
\]
Moreover, we have $\abs{P_d(t)} \leq 1$ for all complex $\abs{t} \leq 1$.
\end{lem}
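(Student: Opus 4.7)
The plan is to invoke the classical Gauss formula counting monic irreducible polynomials, which expresses $\pi_d(q)$ as an explicit $\bZ$-linear combination of powers of $q$. Concretely, by counting elements of $\bF_{q^d}$ according to the degree of their minimal polynomial one has the identity
\[
q^d = \sum_{e \mid d} e\, \pi_e(q),
\]
and M\"obius inversion then yields
\[
\pi_d(q) = \frac{1}{d} \sum_{e \mid d} \mu(d/e)\, q^e.
\]
Dividing through by $q^d$ and substituting $u = q^{-1}$, one is led to define
\[
P_d(u) := \frac{1}{d} \sum_{e \mid d} \mu(d/e)\, u^{d-e} \in \bQ[u],
\]
which is a polynomial of degree at most $d-1$ satisfying $P_d(q^{-1}) = \pi_d(q) \cdot q^{-d}$ for every prime power $q$ by construction.

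For the bound, the plan is simply to apply the triangle inequality to this explicit expression. For $t \in \bC$ with $\abs{t} \leq 1$, we have $\abs{t}^{d-e} \leq 1$, so
\[
\abs{P_d(t)} \leq \frac{1}{d} \sum_{e \mid d} \abs{\mu(d/e)} \cdot \abs{t}^{d-e} \leq \frac{1}{d} \sum_{f \mid d} \abs{\mu(f)} = \frac{2^{\omega(d)}}{d},
\]
where $\omega(d)$ denotes the number of distinct prime divisors of $d$.

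It then remains to observe that $2^{\omega(d)} \leq d$. If $d = p_1 p_2 \cdots p_k \cdot m$ with $p_1 < p_2 < \ldots < p_k$ the distinct prime divisors and $m \geq 1$, then since each $p_i \geq 2$ we have $d \geq p_1 p_2 \cdots p_k \geq 2^k = 2^{\omega(d)}$. Combining, $\abs{P_d(t)} \leq 1$ for all $\abs{t} \leq 1$, as claimed. No step here is a real obstacle; the statement is essentially a packaging of Gauss's formula, and the only minor subtlety is that the bound has to hold on the entire complex unit disk rather than merely at the arithmetic points $t = q^{-1}$, which is why one cannot simply invoke $\pi_d(q) \leq q^d/d$ and instead relies on the explicit coefficient estimate via M\"obius.
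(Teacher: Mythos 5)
Your proof is correct and follows essentially the same route as the paper: write down Gauss's formula, read off the polynomial $P_d$, and bound it on the closed unit disk by the triangle inequality. The paper's closing estimate is even more terse (each summand has modulus at most $1$ and there are at most $d$ divisors of $d$, hence at most $d$ summands), whereas you sharpen this to the squarefree divisor count $2^{\omega(d)} \leq d$; both work, and the difference is cosmetic.
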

\begin{proof}
This follows from Gau\ss's formula $\pi_d(q) = \frac{1}{d} \sum_{k \mid d} \mu(k) q^{d/k}$. Concretely, we have
\[
P_d(u) = \frac{1}{d} \sum_{k \mid d} \mu(k) u^{d-d/k}.
\]
The estimate is trivial because there are at most $d$ summands.
\end{proof}

\begin{lem}
\label{lem:log}
The following power series is convergent for $\abs{u} < 1/2$:
\[
\lambda(u) := \sum_{k \geq 2} \frac{2-2^k}{k} u^{k-1} \in u \cdot \bQ[[u]].
\]
For all $0<t_0 < 1/2$ there is a constant $c = c(t_0)$ such that $\abs{\lambda(t)} \leq c\cdot \abs{t}$ for all complex $\abs{t} \leq t_0$.
\end{lem}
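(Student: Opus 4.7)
The plan is to first recognize $\lambda(u)$ as an explicit logarithmic expression and then read off both assertions from standard facts about convergent power series.

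For the radius of convergence, I would apply the root test to the coefficients $a_{k-1} = (2-2^k)/k$. Since $\limsup_{k\to\infty} |a_{k-1}|^{1/(k-1)} = 2$, the series converges absolutely precisely on $|u|<1/2$. More transparently, I would combine the standard expansions
\[
-\log(1-u) = \sum_{k\geq 1}\frac{u^k}{k}, \qquad -\log(1-2u) = \sum_{k\geq 1}\frac{2^k u^k}{k},
\]
both valid on $|u|<1/2$, to obtain the closed form
\[
u\cdot\lambda(u) = \sum_{k\geq 2}\frac{2-2^k}{k} u^k = 2\log(1-u)-\log(1-2u) = \log\!\left(\frac{(1-u)^2}{1-2u}\right),
\]
which displays $\lambda$ as an analytic function on the disk $\{|u|<1/2\}$.

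For the linear bound, the key observation is that the series defining $\lambda$ starts at $k=2$ and hence contains no constant term; the lowest power is $u^{k-1}=u$. Consequently
\[
\frac{\lambda(u)}{u} = \sum_{k\geq 2}\frac{2-2^k}{k}u^{k-2}
\]
is itself a power series convergent on $|u|<1/2$. On the closed sub-disk $|u|\leq t_0<1/2$ it is therefore absolutely and uniformly bounded by the positive number
\[
c := c(t_0) = \sum_{k\geq 2}\frac{2^k-2}{k} t_0^{k-2} < \infty,
\]
and multiplying through by $|t|$ yields $|\lambda(t)|\leq c\cdot |t|$ for all $|t|\leq t_0$.

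I do not anticipate any serious obstacle here: both statements are routine consequences of the root test together with the vanishing $\lambda(0)=0$. The only point worth highlighting is that the closed-form identification $u\lambda(u)=\log((1-u)^2/(1-2u))$, while not strictly needed, explains why $1/2$ is the natural boundary (the zero of $1-2u$) and will be convenient later when singular-series computations compare the two logarithmic factors.
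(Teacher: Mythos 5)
Your argument follows the paper's proof in all essentials: identify $\lambda$ with a logarithmic expression holomorphic on $|u|<1/2$ (hence radius of convergence $1/2$), then bound $\lambda(u)/u$ on the closed disk of radius $t_0$; the paper invokes continuity of $\lambda(u)/u$ on a compact set, you instead exhibit the explicit majorant $\sum_{k\geq 2}\frac{2^k-2}{k}t_0^{k-2}$, which is the same content. One sign slip, though: the correct identity is
\[
u\,\lambda(u) \;=\; \log(1-2u) - 2\log(1-u),
\]
not $2\log(1-u)-\log(1-2u)$; check the $u^2$-coefficient, $\tfrac{2-2^2}{2}=-1$, against $\log(1-2u)-2\log(1-u) = -u^2 + O(u^3)$. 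This reversal is harmless for the lemma itself (both candidates are holomorphic on $|u|<1/2$ and vanish to first order at $0$), but the correct sign is used verbatim in the subsequent computation of $\log S(q^{-1})$, so it is worth fixing before proceeding.
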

\begin{proof}
The power series is an expansion of $\frac{\log(1-2u) - 2 \log (1-u)}{u}$ which is holomorphic in $\abs{u} < 1/2$, hence its radius of convergence is $1/2$.

The existence of a constant $c(t_0)$ and the estimate follows because $\lambda(u)/u$ is continuous and thus bounded on the compact ball of radius $t_0$.
\end{proof}

Since $\lambda(u^d)$ is divisible by $u^d$, the following sum formally converges 
\[
S(u) : = \exp\left(\sum_{d \geq 1} P_d(u) \cdot \lambda(u^d) \right) \in \bQ[[u]].
\]

\begin{prop}
The power series $S(u)$ converges for $\abs{u} < 1/2$. For all prime powers $q \geq 3$ we have
\[
S(q^{-1}) = \fS_q((0,1)).
\]
In particular, the product defining the function field singular series converges absolutely.
\end{prop}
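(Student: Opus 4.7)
The plan is to reduce everything to a manipulation of the logarithm and apply the two preceding lemmas. I will show that (i) the series in the exponent defining $S(u)$ converges absolutely on $\abs{u}<1/2$, and (ii) its value at $u=q^{-1}$ matches $\log \fS_q((0,1))$ termwise after expanding each Euler factor as a power series.

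First I would establish convergence. Fix any $t_0$ with $0 < t_0 < 1/2$, and let $c = c(t_0)$ be the constant of Lemma~\ref{lem:log}. For $\abs{u} \leq t_0$ we have $\abs{u^d} \leq t_0^d \leq t_0 < 1/2$, hence
\[
\abs{P_d(u) \cdot \lambda(u^d)} \leq 1 \cdot c \cdot \abs{u^d} \leq c \cdot t_0^d
\]
by Lemma~\ref{lem:gauss} and Lemma~\ref{lem:log}. This is a convergent geometric series, so $\sum_{d \geq 1} P_d(u) \lambda(u^d)$ converges absolutely and uniformly on compact subsets of $\abs{u}<1/2$, and its exponential $S(u)$ is a holomorphic function there.

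Next I would identify the Euler factor of $\fS_q((0,1))$ at a prime $\fp$ of degree $d$ with the corresponding summand. A direct Taylor expansion gives, for $\abs{x} < 1/2$,
\[
\log\!\left(\frac{1-2x}{(1-x)^2}\right) = \log(1-2x) - 2\log(1-x) = \sum_{k \geq 1} \frac{2 - 2^k}{k} x^k = x \cdot \lambda(x),
\]
the $k=1$ term vanishing. Since there are $\pi_d(q)$ primes of degree $d$ in $\bF_q[T]$ (each contributing the same local factor with $N(\fp) = q^d$), grouping by degree yields
\[
\log \fS_q((0,1)) = \sum_{d \geq 1} \pi_d(q) \cdot q^{-d} \cdot \lambda(q^{-d}) = \sum_{d \geq 1} P_d(q^{-1}) \cdot \lambda(q^{-d}),
\]
using Lemma~\ref{lem:gauss} in the last step. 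The hypothesis $q \geq 3$ gives $q^{-1} \leq 1/3 < 1/2$, so by the convergence established above both the rearrangement of terms in the product and in its logarithm are legitimate, and the product defining $\fS_q((0,1))$ converges absolutely. Exponentiating gives $S(q^{-1}) = \fS_q((0,1))$.

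There is no essential obstacle here: the real work was already invested in Lemmas~\ref{lem:gauss} and~\ref{lem:log} to package the two sources of growth (the number of primes and the size of each Euler factor) into functions with good bounds on $\abs{u}<1/2$. The only point deserving care is the exchange of sum and logarithm in the Euler product, which is justified by the absolute convergence bound $\abs{P_d(u)\lambda(u^d)} \leq c \cdot t_0^d$ proved in the first step.
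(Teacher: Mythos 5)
Your proof is correct and follows essentially the same route as the paper: you verify uniform convergence on balls of radius $t_0 < 1/2$ via the bounds $\abs{P_d(u)}\leq 1$ and $\abs{\lambda(u^d)}\leq c\,t_0^d$ from the two preceding lemmas, and then match $\log S(q^{-1})$ with $\log\fS_q((0,1))$ by grouping Euler factors by degree, exactly as in the paper (you simply run the chain of equalities from $\log\fS_q$ toward the sum rather than the other way, and spell out the geometric-series bound that the paper leaves implicit).
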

\begin{proof}
It suffices to show that the sum of holomorphic functions $\sum_{d \geq 1} P_d(u) \cdot \lambda(u^d)$  converges uniformly on any ball of radius $t_0 < 1/2$. This follows at once from the estimates in Lemma~\ref{lem:gauss} and Lemma~\ref{lem:log}.

In order to compute the value $S(q^{-1})$ we rather compute its logarithm as follows:
\begin{align*}
\log S(q^{-1}) & = \sum_{d \geq 1} P_d(q^{-1}) \cdot \lambda(q^{-d}) = \sum_{d \geq 1} \pi_d(q)q^{-d} \cdot \frac{\log(1-2q^{-d}) - 2 \log (1-q^{-d})}{q^{-d}} \\
& = \sum_{d \geq 1} \pi_d(q) \cdot \log \left(\frac{1-2q^{-d}}{(1-q^{-d})^2} \right) = \log \prod_{d \geq 1}  \left(\frac{1-2q^{-d}}{(1-q^{-d})^2}\right)^{\pi_d(q)} = \log \fS_q((0,1)). \qedhere
\end{align*}
\end{proof}

\begin{rmk}
In order to analyse $S(u)$ at $u=1/2$, we use $P_1(u) = 1$ and split the factor corresponding to $d=1$ by writing
\[
S(u) = (1-2u)^2 \cdot \exp\Big(\sum_{k \geq 1} \big(2^{k+1}(\frac{1}{k} - \frac{1}{k+1}) + \frac{2}{k+1}\big)u^k \Big) \cdot \exp \left(\sum_{d \geq 2} P_d(u) \cdot \lambda(u^d) \right).
\]
The factor $\exp \left(\sum_{d \geq 2} P_d(u) \cdot \lambda(u^d) \right)$ converges for $u<1/\sqrt{2}$, and the middle factor can be dealt with at $u = 1/2$ by Abel's theorem of converging power series on their radius of convergence. Hence the series $S(u)$ converges at $u=1/2$ and takes the value $S(1/2) = 0$ there. This agrees with $\fS_2((0,1)) = 0$.
\end{rmk}

\begin{rmk}
It is not difficult to calculate the low degree terms of the power series $S(u)$. For all $m \geq 1$, we have in $\bQ[[u]]$, by truncating all of the power series in the definition, that 
\[
S(u) =  \sum_{\nu = 0}^m \frac{1}{\nu!} \left(\sum_{d=1}^m P_d(u) \sum_{k=2}^{\lfloor 1 + \frac{m}{d}\rfloor} \frac{2-2^k}{k} u^{d(k - 1)} \right)^\nu   + \rO(u^{m+1}).
\]
Concretely, we obtain using SageMath:
\[
S(u) = 1 - u - 2u^{2} - u^{3} - 2u^{4} + 2u^{5} + 6u^{7} + 7u^{8} + 13u^{9} + 20u^{10} + 32 u^{11} + 41u^{12}+ O(u^{13}).
\]
\end{rmk}

\begin{rmk}
\label{rmk:heuristicerrorterm}
When analysing the asymptotic $q \to \infty$ for fixed $d$, then of course the primes of degree $> d$ do not pose any constraints in the heuristic. Consequently, the correlation factor $\fS_q((0,1))$ should skip these primes. If we use the truncated power series 
\[
S(u)  = S_d(u) + \rO(u^{d+1}),
\]
with a polynomial $S_d(u)$ of degree $\leq d$, then it is easy to see that primes of degree $> d$ do not influence $S_d(u)$. Moreover, in order to take into acount the translation invariance for $h = (0,1)$, we should rather truncate modulo $u^d$. Hence we propose to compare the actual count with the truncated series $\frac{q^d}{d^2} \cdot S_{d-1}(q^{-1})$:
\[
\pi(d,q;(0,1)) = \frac{q^d}{d^2} \cdot S_{d-1}(q^{-1}) \cdot \big(1 + E(d,q;(0,1)) \big).
\]
Here are the predictions and error terms 
accordingly:
\[
{\renewcommand{\arraystretch}{1.2}
\setlength{\arraycolsep}{1em} 
\begin{array}{llllll}
\toprule
 d & \text{prediction} & q & \pi(d,q;(0,1))  & E(d,q;(0,1))  \\ \midrule
1 & q &                                 & q   &  0  \\ \midrule
2 &  \frac{q^2}{4}  (1- q^{-1}) &  \equiv  0  \ (2) & \frac{q^2}{4} (1 - 2q^{-1}) &  q^{-1} + \rO(q^{-2})\\ 
  & &  \equiv  1 \ (4) & \frac{q^2}{4} (1 - q^{-1})  &  0 \\
  & &  \equiv  3 \ (4) & \frac{q^2}{4} (1 - 3q^{-1})  &  2q^{-1} + \rO(q^{-2}) \\ \midrule
3 & \frac{q^3}{9} (1- q^{-1} - 2q^{-2}) &  \equiv  0 \ (3) & \frac{q^3}{9} (1 - q^{-1} - 3q^{-2})  &  q^{-2} + O(q^{-3})  \\
  & &   \equiv  1 \ (3) & \frac{q^3}{9} (1 - (c^2_q-3)q^{-1} - 2q^{-2})  \hspace{-0.5cm} &    	
(2-c_q^2)q^{-1} + \rO(q^{-2})
\\
  & &  \equiv  2 \ (3) & \frac{q^3}{9} (1 - q^{-1} - 2q^{-2})  &  0 \\ \bottomrule
\end{array}
}\]
Thus, if $d=3$ and  $q \not\equiv 1 \pmod 3$, then \eqref{eq:MTmod3version} is consistent with square root cancelation in \eqref{HLlargeqlimit}. 
When $q \equiv 1 \pmod 3$, the coefficient of $q^{-1}$ varies with $q$. We will now argue that at least on avarage among those $q$ this coefficient is again $-1$. 
There are unique angles $\teta_q \in [0,\pi]$ such that $c_q = 2 \cos(\teta_q)$. Since $\cE$ is a CM-curve, its $L$-function can be expressed in terms of a Hecke character following  Deuring \cite{deuring}. In this case already Hecke \cite{Hecke} showed 
that for primes split in the field of multiplication, here $\bQ(\sqrt{-3}) = \bQ(\zeta_3)$, the distribution of angles is uniform with respect to the measure $\frac{1}{\pi} d\teta$, see Section~\S2.4 of the survey by Sutherland \cite{sutherland} for more details and references. Hence, the value of $c_q^2$  on average is 
\[
\frac{1}{\pi} \int_0^\pi 4 \cos^2 (\teta) d\teta = 2,
\]
and  the average value of the coefficient of $q^{-1}$ 
becomes again $-1$.
\end{rmk}

\section{Parametrizing irreducible polynomials}
\label{sec:parametervariety}

In this section we describe the geometry of the parametrizing varieties of twin primes. 

\subsection{Rational points of twists} 
\label{sec:ratptstwist}

Let $\bar\bF_q$ be an algebraic closure of $\bF_q$. Let $X_0/\bF_q$ be a variety and denote by $X = X_0 \times_{\bF_q} \bar\bF_q$ or by the usual notation $X_{0,\bar\bF_q}$ the base change to $\bar\bF_q$. The $\bar\bF_q$-linear geometric $q$-Frobenius map  
\[
F : X \to X
\]
raises coordinates (defined over $\bF_q$, i.e., coordinates of $X_0$) to $q$-th powers. Hence the set of $\bF_q$-rational points of $X_0$ is the set of Frobenius fixed points of $X$
\begin{equation}\label{eq:fixedpts}
X_0(\bF_q) = \{ x \in X(\bar\bF_q) \ ; \ F(x) = x\},
\end{equation}
and 
counted by the Lefschetz trace formula in \'etale cohomology with compact support for 
$\ell \not= p$ as 
\begin{equation}
\label{eq:lefschetztraceformula}
\#{X_0(\bF_q)}  = \tr \big(F | \rH_{\rc}^*(X,\bQ_\ell)\big) := \sum_{i=0}^{2\dim X} (-1)^{i} \tr \big(F | \rH_{\rc}^i(X,\bQ_\ell)\big).
\end{equation}
For background on \'etale cohomology and a proof of the Lefschetz trace formula we refer to \cite{freitagkiehl}.

Forms of $X_0$ over $\bF_q$ are obtained by Galois descent via a twisted Galois action on $X$ by means of a continuous $1$-cocycle $\sigma \mapsto a_\sigma$
\[
a : \Gal(\bar\bF_q/\bF_q) \to \Aut(X/\bar\bF_q) 
\]
with values in the $\Gal(\bar \bF_q/\bF_q)$-module $\Aut(X/\bar \bF_q)$, see for example \cite[\S2]{sko:torsors}.
Being a cocycle means  for all $\sigma, \pi \in \Gal(\bar \bF_q/\bF_q)$ that  
$a_{\sigma \pi} = a_\sigma \circ \sigma(a_\pi)$. Equivalently, the map 
\[
\sigma \mapsto a_\sigma \sigma \in \Aut(X/\bF_q)
\]
defines another Galois action on $X$. The $a$-twist of $X_0/\bF_q$ is defined as the quotient of $X$ by the twisted Galois action and is here denoted by 
\[
X_{a}/\bF_q.
\]
Let $\ph \in \Gal(\bar \bF_q/\bF_q)$ be the arithmetic $q$-Frobenius map $\ph(z) = z^q$. The $1$-cocycle $a$ is uniquely determined by its value $\alpha = a (\ph)$. The Frobenius of $X$ arising from the new $\bF_q$-structure $X_{a}$ is nothing but 
\[
F_a = \alpha F.
\]
It follows that $\bF_q$-rational points of the twist are described by 
\begin{equation}\label{eq:fixedpts-twisted}
X_{a}(\bF_q)  = \{ x \in X(\bar \bF_q) \ ; \  F_a(x) = x\} = \{ x \in X(\bar \bF_q) \ ; \ F(x) = \alpha^{-1}(x)\}
\end{equation}
and counted as 
\begin{equation}
\label{eq:twistedLefschetztraceformula}
\#{X_{a}(\bF_q)}  = \tr \big(\alpha F | \rH_{\rc}^*(X,\bQ_\ell)\big).
\end{equation}

\subsection{Twisting varieties of polynomials} 
\label{sec:twistingvarpol}

Monic polynomials of degree $d$ 
\[
f(T) = T^d + a_1T^{d-1}+ \ldots + a_{d-1}T + a_d 
\] 
with coefficients $a_i \in \bF_q$ are parametrized by $d$-dimensional affine space as
\[
f = (a_1, \ldots, a_d) \in \bA^d(\bF_q).
\]
We have a finite 
branched $S_d$-cover $s: \bA^d \to \bA^d$ 
\[
s(x_1, \ldots,x_d) = (-\sigma_1(\underline{x}), \ldots, (-1)^d \sigma_d(\underline{x})) = \prod_{i=1}^d (T - x_i)
\]
whose coordinates  are 
given by the elementary symmetric polynomials in 
$\underline{x} = (x_1,\ldots,x_d)$
\[
\sigma_r(\underline{x}) = \sum_{1 \leq i_1 < \ldots < i_r \leq n} x_{i_1} \cdot \ldots \cdot x_{i_r}.
\]
The ramification locus of $s$ is given by the vanishing locus of the discriminant
\[
\Delta_{x} := \prod_{i \not= j} (x_i-x_j) \in \bZ[a_1, \ldots, a_d] \subseteq \bZ[x_1, \ldots, x_d],
\]
that is the locus of polynomials with multiple roots.

\medskip

We consider the twist\footnote{Because of the general theorem Hilbert 90, $\rH^1(\bF_q,\GL_d) = 1$, the twisted $\bA^d$ is isomorphic to $\bA^d$. This fact leads to good asymptotic formulae for the number of irreducible polynomials in $\bF_q[X]$ of degree $d$.} of $\bA^d$ by the $1$-cocycle $\tau$ determined by its value on Frobenius being the $d$-cycle
\[
\tau(\ph) = (123 \ldots d) \in S_d
\]
with respect to the permutation representation
\[
S_d \to \GL_d(\bF_q) \to \Aut(\bA^d).
\]
As described in \eqref{eq:fixedpts-twisted}, rational points of the twist are
those points $\underline{x} = (x_1, \ldots, x_d) \in \bA^d(\bar \bF_q)$ invariant under the twisted Frobenius (with indices considered modulo $d$): 
\[
(\bA^d)_\tau(\bF_q) = \{\underline{x} \in \bA^d(\bar \bF_q) \ ; \ x_i^q  = x_{i+1} \  \text{ for all } i=1,\ldots, d\}.
\]
Being $S_d$-invariant, the cover $s$ becomes a cover
\[
s_\tau :  (\bA^d)_\tau \to \bA^d.
\]
Rational points of the twist outside of the discriminant locus 
map under $s$ to polynomials whose roots are all distinct and cyclically permuted by $q$-Frobenius, hence exactly to irreducible polynomials. Every unramified point $f \in \bA^d(\bF_q)$ in the image of the map 
\[
s_\tau : (\bA^d)_\tau(\bF_q) \to \bA^d(\bF_q)
\]
has $d$ preimages, because each preimage in $(\bA^d)_\tau(\bF_q)$ is determined by its $x_1 \in \bar \bF_q$ which can be any of the $d$ distinct roots of the irreducible polynomial $f$.

\subsection{Parametrizing twin prime polynomials} 
\label{sec:parametrizetpp}

Let $M$ be the $\bF_q$-vector space with generators 
\[
x_1, \ldots, x_d, y_1, \ldots, y_d, z
\]
and the only relation $\sigma_1(\underline x) = \sigma_1(\underline{y})$. 
The corresponding projective space
\[
\bP(M) = \Proj (\Sym^\bullet M) 
\]
is the hyperplane $V\big(\sigma_1(\underline x) = \sigma_1(\underline{y})\big)$ in $\bP^{2d}$, the projective space with homogeneous coordinates 
\[
[x_1: \ldots : x_d:y_1: \ldots : y_d: z].
\]
In $\bP(M)$ we consider the subvariety 
\[
X = V\big(\sigma_2(\underline{x}) -\sigma_2(\underline{y}), \ \ldots , \sigma_{d-1}(\underline{x}) - \sigma_{d-1}(\underline{y}), \ \sigma_d(\underline{x}) - \sigma_d(\underline{y}) + (-z)^d\big) \subseteq \bP(M).
\]
On the distinguished open $z \not= 0$ we map a point $[\underline x : \underline y: 1]$ to the pair of polynomials
\[
f(T) = \prod_{i=1}^d (T - x_i), \qquad g(T) = \prod_{j=1}^d (T-y_j)
\]
by means of which the defining equations for $X$ take the simple form
\[
g(T) = f(T)  + 1.
\]
Let $\Delta_x$ (resp.\ $\Delta_y$) denote the discriminant in terms of the tuple of variables $\underline x$ (resp.\ $\underline y$). Let $U \subseteq X$ be the open 
\[
U = X \cap \{ \Delta_x \cdot \Delta_y \cdot z \not= 0\},
\]
i.e., the locus where the description is by separable polynomials. 

We let $S_d$ act diagonally on $M$ by permuting blockwise both tuples of variables $\underline {x}$ and $\underline y$, keeping $z$ fixed. This induces actions on $\bP(M)$, $X$ and $U$ respectively. 
It follows from  Section~\ref{sec:twistingvarpol} and the notation introduced there that rational points of the twist $U_\tau$ are 
\[
U_\tau(\bF_q) = \{(x_1, \ldots, y_d) \in \bar \bF_q^{2d} \ ; \ f \text{ and } g \text{ are monic,  irreducible and } g(T) = f(T) + 1\}.
\]
Therefore we have proved the following proposition as a consequence of \eqref{eq:lefschetztraceformula}. Recall that we denote by $U_{\bar \bF_q}$ the base change $U \times_{\bF_q} \bar \bF_q$.

\begin{prop}
\label{prop:LTF}
The number 
of twin prime polynomial pairs in $\bF_q[T]$ of degree $d$ is
\[
\pi(d,q;(0,1)) 
= \frac{1}{d^2} \cdot \#{U_\tau(\bF_q)} = \frac{1}{d^2} \tr\big(\tau F | \rH^\ast_{\rc}(U_{\bar \bF_q},\bQ_\ell)\big) .
\]
\end{prop}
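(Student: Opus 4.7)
The proof amounts to carefully unwinding what the fixed points of the twisted Frobenius on $U$ parametrize, and then invoking the trace formula~\eqref{eq:twistedLefschetztraceformula} from Section~\S\ref{sec:ratptstwist}. I will break it into two steps.

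\textbf{Step 1: Bijection between $U_\tau(\bF_q)$ and ordered sets of roots of twin prime pairs.} The plan is to apply the description \eqref{eq:fixedpts-twisted} of twist-rational points for the cocycle $\tau$, whose value on arithmetic Frobenius is the $d$-cycle $(1\,2\,\ldots\,d)$ acting diagonally on both blocks of coordinates. A geometric point of $U$ lies on the chart $z\neq 0$, which we normalize to $z=1$, so we may identify
\[
U_\tau(\bF_q)=\bigl\{(\underline{x},\underline{y})\in \bar\bF_q^{2d}\ :\ \Delta_{\underline x}\Delta_{\underline y}\neq 0,\ g(T)=f(T)+1,\ x_i^q=x_{i+1},\ y_j^q=y_{j+1}\bigr\},
\]
where the indices are taken cyclically modulo $d$, and $f,g$ are the monic polynomials of Section~\S\ref{sec:parametrizetpp}. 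The cyclic Frobenius conditions on $\underline x$ force $x_1$ to have degree dividing $d$ over $\bF_q$, and separability ($\Delta_{\underline x}\neq 0$) forces the orbit to have exact length $d$. Hence $f\in\bF_q[T]$ is monic irreducible of degree $d$, and likewise for $g$. Conversely, any such pair $(f,g)$ with $g=f+1$ yields valid tuples by choosing $x_1$ to be any of the $d$ roots of $f$ and $y_1$ any of the $d$ roots of $g$; the remaining coordinates are then determined by the cyclic Frobenius relations.

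\textbf{Step 2: Counting and the trace formula.} The previous step gives a surjection
\[
U_\tau(\bF_q)\longrightarrow \bigl\{(f,g)\ :\ f,g\in\bF_q[T]\text{ monic irred.\ of degree }d,\ g=f+1\bigr\}
\]
whose fibers have exactly $d\cdot d=d^2$ elements (the choices of $x_1$ and $y_1$ being independent). Therefore
\[
\#U_\tau(\bF_q)=d^2\cdot\pi(d,q;(0,1)).
\]
Now $U$ is a separated $\bF_q$-variety of finite type, so the twisted Lefschetz trace formula~\eqref{eq:twistedLefschetztraceformula} applies with $\alpha=\tau$, giving
\[
\#U_\tau(\bF_q)=\tr\bigl(\tau F\mid \rH^\ast_{\rc}(U_{\bar\bF_q},\bQ_\ell)\bigr),
\]
and dividing by $d^2$ yields the stated identity.

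\textbf{Expected obstacle.} The statement itself is essentially a bookkeeping consequence of the construction; the only subtlety is making sure that (i) the diagonal $S_d$-action respects the cocycle twist so that passing to $U_\tau$ is legitimate, and (ii) the constraint $\sigma_1(\underline x)=\sigma_1(\underline y)$ imposed in the ambient $\bP(M)$ is automatically compatible with $g=f+1$ on the chart $z=1$, so that removing the discriminant locus does not accidentally delete any twin prime pair. Both points are immediate from the definitions, so no real difficulty arises; the genuine work is postponed to the cohomological computation in Section~\S\ref{sec:cohomology}, where the trace of $\tau F$ must actually be evaluated.
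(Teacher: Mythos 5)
Your proof is correct and follows essentially the same route as the paper: the paper likewise identifies $U_\tau(\bF_q)$ with the set of tuples satisfying the cyclic Frobenius conditions, observes (in its \S\ref{sec:twistingvarpol}) that each irreducible $f$ of degree $d$ gives exactly $d$ preimages — hence $d^2$ for a pair — and then invokes \eqref{eq:twistedLefschetztraceformula} with $\alpha=\tau$. Your write-up merely unfolds these steps more explicitly (e.g.\ the degree-exactly-$d$ argument via $\Delta_x\neq 0$), which the paper leaves implicit.
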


We are left with the task to understand $\rH^\ast_{\rc}(U_{\bar \bF_q},\bQ_\ell)$ as $S_d \times \Gal(\bar \bF_q/\bF_q)$-module. Since we are only interested in a certain trace, it suffices to determine cohomology in a Grothendieck group of virtual representations.

\begin{ex}
\label{ex:caseD=2}
We compute the case $d=2$ using the above notation along the strategy for the $d=3$ case to be treated in the following section \S\ref{sec:geometry}. The variety $X$ is given in $\bP^4$ by the equations
\begin{align*}
x_1 + x_2 &= y_1 + y_2, \\
x_1 x_2 &= y_1y_2 - z^2.
\end{align*}
Using $a = x_1-y_2$, $b = x_2 - y_2$, $c = y_1-y_2$,  and $z$ we may express these as
\begin{align*}
a+ b  & = c, \\
ab & =  -z^2,
\end{align*}
so that the complement of $\star_M := [1:1:1:1:0]$ in $X$ forms an $\bA^1$-bundle over  the smooth conic
\[
\bP^1 \simeq  Y = V(z^2 + ab) \subseteq \bP^2.
\]
The open $U = X \cap \{(x_1-x_2)(y_1-y_2)z \not= 0\}$ is the preimage in this $\bA^1$-bundle of 
\[
V = Y \cap \{(a-b)(a+b)z \not= 0\}.
\]
In coordinates $[a:b:z]$, this removes the following set of points $D$ from $Y$:
\begin{itemize}
\item
If the characteristic is $2$,  the points $D  =\{P_1,P_2,P_3\}$ with 
\[
P_1 = [0:1:0], \ P_2 = [1:0:0], \text{ and } P_3 = [1:1:1].
\]
\item
If the characteristic is not $2$,  the points $D = \{P_1,P_2,Q_+,Q_-,R_+,R_-\}$ with 
\begin{align*}
& P_1 = [0:1:0], \ P_2 = [1:0:0], \\
& Q_{+} = [1:1: i], \ Q_- = [-1:-1:i], \\
& R_+= [1:-1:1], \ R_- = [-1:1:1].
\end{align*}
Here $i$ denotes a square root of $-1$, possibly in a quadratic extension of $\bF_q$.
\end{itemize}
The involution $\tau$ defined by $x_1 \leftrightarrow x_2$, $y_1 \leftrightarrow y_2$ acts via $a \mapsto -a$ and $b \mapsto -b$, hence the points $P_k$, for $k=1, \ldots, 3$, are fixed while the points with index $\pm$ are swapped. The action of Frobenius on these points is only nontrivial for $Q_{\pm}$, swapping these points, if $-1$ is not a square in $\bF_q$, i.e., if the Jacobi-symbol $\qrB{-1}{q} = (-1)^{(q-1)/2}$ equals $-1$ (here $q$ is odd).

In summary, we may evaluate the trace formula of Proposition~\ref{prop:LTF} as
\begin{align} 
\label{eq:caseD=2}
\pi(2,q;(0,1)) & = \frac{1}{4} \tr\big(\tau F | \rH^\ast_{\rc}(U_{\bar \bF_q},\bQ_\ell)\big)  = \frac{1}{4}q \cdot \tr\big(\tau F | \rH^\ast_{\rc}(V_{\bar \bF_q},\bQ_\ell)\big) \notag \\
& = \frac{1}{4}q \cdot \Big(\tr\big(\tau F | \rH^\ast(\bP^1_{\bar \bF_q},\bQ_\ell)\big) - \tr\big(\tau F| \rH^\ast(D_{\bar \bF_q},\bQ_\ell)\big) \Big) \notag \\
& = \frac{1}{4}q \cdot \Big(1+q  -  \# D_\tau(\bF_q) \Big) \notag \\
& = \frac{1}{4}q^2 \cdot
  \begin{cases}
 1- (2 - \qrB{-1}{q}) q^{-1}  & q \text{ is odd}, \\
 1- 2q^{-1} & q \text{ is even}.
\end{cases}
\end{align}
\end{ex}

\section{Geometry}
\label{sec:geometry} 

We keep the notation of Section~\ref{sec:parametrizetpp} but specialize to $d=3$. Moreover, we consider variables 
\[
\underline{x}  = (x_0,x_1,x_\infty) \quad \text{ and } \quad \underline{y} = (y_0,y_1,y_\infty)
\]
in order to simplify notation when dealing with symmetries. The reader is advised to consult with Figure~\ref{diagram} which provides a visualizations of the construction.  

\subsection{Linear projection}

Recall that $M$ is the $\bF_q$-vector space spanned by $x_0,x_1,x_\infty,y_0,y_1,y_\infty,z$ subject to the relation $\sigma_1(\underline{x}) = \sigma_1(\underline{y})$. 
We introduce the following subspaces 
\[
M \supseteq N \supseteq L
\]
with $L$ generated by the images of $x_i - y_j$ for all $i,j \in \{0,1,\infty\}$ and $N$ generated by $L$ and the image of $z$. The space $L$ has a basis $a,b,c,d$ given by the matrix entries of 
\[
\matzz{a}{b}{c}{d} = \matzz{y_\infty-x_1}{x_\infty-y_1}{x_0 - y_1}{y_0 - x_1}.
\]

\begin{lem}
\label{lem:lks}
We will need the following descriptions with respect to the basis $a,b,c,d$:
\[
\begin{array}{lcl@{ \quad }|@{ \quad }lcl@{ \quad }|@{ \quad }lcl}
x_\infty - y_\infty  & = & d - c & x_\infty - y_0 & = & a - c & x_\infty - y_1 & = & b \\
x_0 - y_\infty  & = &  d-b &  x_0 - y_0 & = & a-b &  x_0 - y_1 & = &  c \\
x_1 - y_\infty  & = &  -a &  x_1 - y_0 & = & -d  &  x_1 - y_1 & = & b+c - a - d \\
\\
& &  &  y_\infty  - y_1 & = & b+c - d   & y_\infty  - y_0 & = & a - d \\ 
x_\infty - x_1 & = &a + d - c & &  & & y_0- y_1 & = & b+c - a \\
x_\infty - x_0 & = & b - c & x_0- x_1 & = & a + d - b & & &
\end{array}
\]
\end{lem}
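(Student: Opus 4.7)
The plan is to verify each identity directly from the definitions
$$a = y_\infty - x_1, \quad b = x_\infty - y_1, \quad c = x_0 - y_1, \quad d = y_0 - x_1$$
together with the single defining relation of $M$, namely $x_0 + x_1 + x_\infty = y_0 + y_1 + y_\infty$, which I will abbreviate as $(\ast)$ in what follows.

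Six of the fifteen entries require no work beyond the definitions: the identities $x_\infty - y_1 = b$, $x_0 - y_1 = c$, $x_1 - y_\infty = -a$ and $x_1 - y_0 = -d$ are tautological, while $y_\infty - y_0 = a - d$ and $x_\infty - x_0 = b - c$ are obtained by subtracting two defining equalities without invoking $(\ast)$.

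For the remaining nine identities the procedure is uniform: substitute the definitions into the right-hand side, collect terms in the $x_i$'s and $y_j$'s, then apply $(\ast)$ exactly once. For instance,
$$d - c = (y_0 - x_1) - (x_0 - y_1) = (y_0 + y_1) - (x_0 + x_1) = x_\infty - y_\infty,$$
the last equality being $(\ast)$. The most substantial case is $x_1 - y_1 = b + c - a - d$: expansion of the right-hand side yields $x_0 + x_\infty + 2x_1 - y_0 - 2y_1 - y_\infty$, and one application of $(\ast)$ in the form $x_0 + x_\infty = y_0 + y_1 + y_\infty - x_1$ collapses this to $x_1 - y_1$. Every other formula in the table falls to exactly the same two-step recipe.

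There is no real obstacle here. The lemma is a bookkeeping statement asserting that the four elements $a, b, c, d$ form a basis of the subspace $L \subseteq M$ and giving the explicit coordinates in this basis of each generator $x_i - y_j$, $x_i - x_{i'}$, $y_j - y_{j'}$. The content is entirely mechanical linear algebra in the quotient by $(\ast)$; the formulae are tabulated now because they will be invoked repeatedly in the subsequent geometric analysis of $X$ and $U$.
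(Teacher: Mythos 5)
Your proof is correct and follows essentially the same route as the paper: direct expansion of the definitions of $a,b,c,d$ together with a single application of the relation $\sigma_1(\underline{x})=\sigma_1(\underline{y})$. The paper's proof gives only the worked example $x_\infty - y_\infty = d-c$ and then observes that the involution $x_i \leftrightarrow y_i$ corresponds to $a \leftrightarrow b$, $c \leftrightarrow d$, using this symmetry to generate further entries from known ones; your version forgoes that shortcut but is equally valid.
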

\begin{proof}
This is elementary, for example, recall that $\sigma_1(\underline{x}) = \sigma_1(\underline{y})$, hence
\[
x_\infty - y_\infty  = y_0 + y_1 - x_0 - x_1 = d - c.
\]
Moreover, the symmetry $x_i \longleftrightarrow y_i$ for $i=0,1,\infty$ translates to the involution
\[
a \longleftrightarrow b \quad \text{ and } \quad c \longleftrightarrow d.
\]
This helps deducing other linear combinations from known ones.
\end{proof}

In each step of $M \supseteq N \supseteq L$ the dimension drops by $1$ and the induced rational maps 
\[
\bP(M) \dasharrow \bP(N) \dasharrow \bP(L) 
\]
are each defined outside one point. More precisely, let 
\[
\star_M = [1:1: \ldots : 1: 0]
\]
be the point in $\bP(M) \subseteq \bP^6$ where all linear coordinates in $N$ vanish, then linear projection is a geometric line bundle (Zariski-locally isomorphic to $\bP(N) \times \bA^1$)
\[
\bP(M) \setminus \star_M \to \bP(N).
\]

The equations for $X \subseteq \bP(M)$ allow the following manipulations:
under the assumption $\sigma_i(\underline x) = \sigma_i(\underline y)$ for $i = 1,2$ a quick calculation shows that we have  
\begin{align}
\sigma_3(\underline x) = \sigma_3(\underline y) + z^3 
\iff & (T-y_0)(T-y_1)(T-y_\infty) = z^3 + (T-x_0)(T-x_1)(T-x_\infty)  \notag \\
\iff & (x_\infty-y_0)(x_\infty-y_1)(x_\infty-y_\infty)  = z^3 \notag \\
\iff & (a-c)b(d-c) = z^3. \label{eq:dritte}
\end{align}
Moreover we find 
\begin{align}
\det \matzz{a}{b}{c}{d} & = (y_\infty-x_1)(y_0-x_1) - (x_\infty- y_1)(x_0-y_1) \notag \\
& = y_\infty y_0 - x_1(y_\infty+y_0) + x_1^2 - y_1^2 - x_\infty x_0 + y_1(x_\infty +x_0) \notag \\
& = \sigma_2(\underline{y}) - \sigma_2(\underline{x})  + (x_1+y_1)(\sigma_1(\underline{x}) - \sigma_1(\underline{y})) \notag \\
& = \sigma_2(\underline{y}) - \sigma_2(\underline{x}).  \label{eq:zweite}
\end{align}
Hence, we define $Y \subseteq \bP(N)$ by the equations
\[
Y := V((a-c)b(d-c) = z^3, \ ad-bc = 0) \subseteq \bP(N).
\]

\begin{prop}
\label{prop:linebundle}
The restriction of the linear projection $\bP(M) \dasharrow \bP(N)$ to $Y$ is a geometric line bundle 
\[
\pr_N: X \setminus \star_M \to Y.
\]
\end{prop}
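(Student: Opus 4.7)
\emph{Approach.} My plan is to realize $X\setminus \star_M$ as the preimage of $Y$ under the ambient linear projection, so that the claimed line-bundle structure on $\pr_N:X\setminus\star_M\to Y$ will follow by restriction from a well-known line-bundle structure on $\bP(M)\setminus \star_M \to \bP(N)$. The two pieces of technical input I will rely on are the identities \eqref{eq:zweite} and \eqref{eq:dritte}, which were already derived in the preceding subsection.

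\emph{The ambient projection.} I would first pick any lift $e \in M$ of $\star_M$ (for example the image of $x_0$) to obtain a direct sum decomposition $M = N \oplus \bF_q\cdot e$. In homogeneous coordinates compatible with this splitting, the point $\star_M$ becomes $[0{:}\cdots{:}0{:}1]$ and linear projection simply forgets the last coordinate. Examining this on affine charts of $\bP(N)$ immediately exhibits
\[
\pr_N : \bP(M)\setminus \star_M \longrightarrow \bP(N)
\]
as a geometric line bundle with fibre $\bA^1$. Consequently the proof will reduce to checking that $X\setminus \star_M = \pr_N^{-1}(Y)$ scheme-theoretically, since the restriction of a line bundle along an inclusion of the base is again a line bundle.

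\emph{Descent of the equations.} The hyperplane relation $\sigma_1(\underline x) = \sigma_1(\underline y)$ is already imposed by the passage from $\bP^6$ to $\bP(M)$, so that $X\subseteq \bP(M)$ is cut out by just the two equations $\sigma_2(\underline x)-\sigma_2(\underline y)=0$ and $\sigma_3(\underline x) - \sigma_3(\underline y) - z^3=0$. By \eqref{eq:zweite} the first equation rewrites as $ad - bc = 0$, a polynomial in the coordinates of $N$ alone. By \eqref{eq:dritte}, modulo $ad-bc$, the second equation rewrites as $(a-c)b(d-c) - z^3 = 0$, again a polynomial in the coordinates of $N$. Hence the homogeneous ideal defining $X\subseteq \bP(M)$ coincides with the ideal generated by the pullbacks via $\pr_N$ of the equations defining $Y\subseteq \bP(N)$. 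This exhibits $X\setminus \star_M$ as $\pr_N^{-1}(Y)$, and the proposition follows.

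\emph{Main obstacle.} No serious obstacle should arise: the argument is essentially a rewriting combined with a standard fact about linear projection from a point. The one point to verify carefully is that the rewriting of the second equation is valid as an identity of polynomials modulo the first, rather than merely as an equivalence of vanishing loci. This is however exactly the content of the chain of equivalences leading to \eqref{eq:dritte}.
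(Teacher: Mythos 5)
Your proof is correct and takes essentially the same route as the paper, whose proof consists of the single sentence ``Immediately from the computations \eqref{eq:dritte} and \eqref{eq:zweite}.'' You have simply made explicit the two routine steps the paper leaves implicit: that $\bP(M)\setminus\star_M\to\bP(N)$ is a geometric line bundle, and that the computations \eqref{eq:zweite} and \eqref{eq:dritte} identify $X\setminus\star_M$ as the scheme-theoretic preimage of $Y$ under this projection.
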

\begin{proof}
Immediately from the computations  \eqref{eq:dritte} and \eqref{eq:zweite}.
\end{proof}

\subsection{A torsor}

We now analyse the second linear projection $\pr_L: \bP(N) \dasharrow \bP(L)$, which in coordinates $[a:b:c:d:z]$ is defined outside of the point 
\[
\star_N  = [0: \ldots : 0: 1].
\]
Because $\star_N \notin Y$ (the cubic equation fails) the second linear projection restricts to a finite map 
\[
j = \pr_L|_Y : Y \to Z  := V(ad-bc = 0) \subseteq \bP(L).
\]
The quadric $Z$ is isomorphic to $\bP^1 \times \bP^1$ by the following isomorphism
\begin{align*}
\bP^1 \times \bP^1 & \xrightarrow{\sim} Z \\
([u:v], [r:s]) & \mapsto \matzz{a}{b}{c}{d} = \binom{u}{v} \cdot (r,s).
\end{align*}
For later use we also record the inverse map as
\begin{equation}
\label{eq:bihomogeneous_coordinates_quadric}
[u:v] = [a:c] = [b:d] , \qquad [r:s] = [a:b] = [c:d].
\end{equation}
The map $j: Y \to Z$ has the structure of a ramified $\mu_3$-torsor on Z defined by the equation 
\[
z^3 = (a-c) b(d-c).
\]
The $\mu_3$-action is given by  $z \mapsto \zeta \cdot z$ for $\zeta \in \mu_3$. In the bihomogeneous coordinates $[u:v],[r:s]$ the equation for the torsor becomes
\[
z^3 = (ur - vr)us (vs - vr) = uv(u-v) \cdot rs (s-r).
\]
This allows to read off the branch locus  as
\[
S = \{0,1,\infty\} \times \bP^1 \cup \bP^1 \times \{0,1,\infty\} \subseteq \bP^1 \times \bP^1,
\]
and that $j$ induces an isomorphsm
\begin{equation}
\label{eq:compareST}
j|_T : \ T :=j^{-1}(S)_{\redu} \xrightarrow{\sim} S.
\end{equation}
of $S$ with the reduced preimage in $Y$ of the branch locus.

\subsection{Symmetries and automorphisms of the projective line}
\label{sec:symm}

We consider the symmetric group $S_3$ as the group of permutations of the set $\{0,1,\infty\}$. It acts
naturally on variables $\underline{x}$ (homogeneous linear coordinate functions) by the right action
\begin{equation}
\label{eq:permutatevariables}
\sigma^\ast(x_i) = x_{\sigma^{-1}(i)}
\end{equation}
and similarly for $\underline{y}$. Since we let $z$ be fixed by $S_3$, we obtain induced $S_3$-actions on $L \subseteq N \subseteq M$ and furthermore on 
\[
X \setminus \star_M \to Y \to Z.
\]
There is also a natural $S_3$-action on $\bP^1$ by projective linear transformations permuting the subset $\{0,1,\infty\} \subseteq \bP^1(\bF_q)$ accordingly. For example, the $3$-cycle 
\[
\tau = (01\infty)
\]
acts on a parameter $\lambda$ for $\bP^1$ as 
\begin{equation}
\label{eq:tauonP1}
\tau^\ast(\lambda) = \frac{1}{1-\lambda}.
\end{equation}
Since counting cubic twin prime polynomials requires control of the effect on cohomology of the $3$-cycle $\tau$, we compute its effect on geometry more explicitly in coordinates: 

\begin{lem}
\label{lem:actionoftauonZ}
The $3$-cycle $\tau = (01\infty)$ acts on $X$, $Y$ and $Z = \bP^1 \times \bP^1$ respectively as
\begin{align}
\tau([x_0:x_1:x_\infty:y_0:y_1:y_\infty:z]) & = [x_\infty:x_0:x_1:y_\infty:y_0:y_1:z], \label{eq:tauonX} \\
\tau([a:b:c:d:z]) & = [-c:-d:a-c:b-d:z], \label{eq:tauonY} \\
\tau([u:v],[r:s]) & = ([-v:u-v],[r:s]). \label{eq:tauonZ}
\end{align}
In particular $\tau$ acts on $Z$ by $\tau \times \id$ for the natural action of $\tau$ on $\bP^1$.
\end{lem}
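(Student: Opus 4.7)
The plan is to derive the three formulas in sequence, starting from \eqref{eq:tauonX} via the convention \eqref{eq:permutatevariables}, then propagating through the linear projection $\bP(M) \to \bP(N)$ to produce \eqref{eq:tauonY} on $Y$, and finally translating into the bihomogeneous coordinates of $Z$ via \eqref{eq:bihomogeneous_coordinates_quadric} to get \eqref{eq:tauonZ}. Since $\tau = (01\infty)$, we have $\tau^{-1}(0) = \infty$, $\tau^{-1}(1) = 0$, $\tau^{-1}(\infty) = 1$. Applying \eqref{eq:permutatevariables} to both the $\underline{x}$- and $\underline{y}$-variables while keeping $z$ fixed, the identity $x_i(\tau P) = (\tau^\ast x_i)(P) = x_{\tau^{-1}(i)}(P)$ immediately yields \eqref{eq:tauonX} upon substitution.

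For \eqref{eq:tauonY} I evaluate each of $a,b,c,d$ at $\tau P$ using their definition as entries of $\matzz{a}{b}{c}{d} = \matzz{y_\infty-x_1}{x_\infty-y_1}{x_0-y_1}{y_0-x_1}$. For instance,
\[
a(\tau P) = y_\infty(\tau P) - x_1(\tau P) = y_1(P) - x_0(P) = -c(P).
\]
The other three entries yield differences $x_i - y_j$ with new index pair $(\tau^{-1}(i), \tau^{-1}(j))$; each such difference already appears in the table of Lemma~\ref{lem:lks} and can be rewritten as an integer combination of $a,b,c,d$. Assembling the four computations gives $\tau P = [-c:-d:a-c:b-d:z]$ as claimed.

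Finally, I use the identification $Z \simeq \bP^1 \times \bP^1$ from \eqref{eq:bihomogeneous_coordinates_quadric} to transport \eqref{eq:tauonY} to $Z$. Substituting into $[u:v] = [a:c]$ gives $[u(\tau P):v(\tau P)] = [-c:a-c] = [-v:u-v]$, while using $[r:s] = [c:d]$ gives $[r(\tau P):s(\tau P)] = [a-c:b-d]$, which equals $[c:d] = [r:s]$ by the relation $ad = bc$ defining $Z$; hence the second factor is fixed. Comparing with the standard action \eqref{eq:tauonP1} via the parameter $\lambda = u/v$ on the first factor confirms $\lambda(\tau P) = -v/(u-v) = 1/(1-\lambda)$, so the induced action on the first $\bP^1$-factor agrees with the standard one. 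The entire argument is essentially bookkeeping; the only care needed is with sign conventions and with the fact that pulling back coordinates in \eqref{eq:permutatevariables} permutes indices via $\tau^{-1}$, not $\tau$.
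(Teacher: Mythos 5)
Your proof is correct and follows essentially the same route as the paper: pull back $a,b,c,d$ via $\tau^\ast$ using \eqref{eq:permutatevariables} and Lemma~\ref{lem:lks}, then translate into the bihomogeneous coordinates on $Z$. The only microscopic divergence is on the second $\bP^1$-factor, where you use $[r:s]=[c:d]$ and invoke the quadric relation $ad=bc$ to see $[a-c:b-d]=[c:d]$, while the paper writes $[r:s]=[a:b]$ and reads off $[-c:-d]=[c:d]$ without an extra step; both amount to using $a/b=c/d$ on $Z$.
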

\begin{proof}
This follows from \eqref{eq:permutatevariables} and Lemma~\ref{lem:lks}:
\begin{align*}
\tau^\ast(a) & = \tau^\ast(y_\infty - x_1) = y_1 - x_0 = -c, \\
\tau^\ast(b) & = \tau^\ast(x_\infty - y_1) = x_1 - y_0 = -d, \\
\tau^\ast(c) & = \tau^\ast(x_0-y_1) = x_\infty - y_0 = a-c, \\
\tau^\ast(d) & = \tau^\ast(y_0-x_1) = y_\infty - x_0 = b-d. 
\end{align*}
 For $Z \simeq \bP^1 \times \bP^1$ we use rational parameters, see \eqref{eq:bihomogeneous_coordinates_quadric}, 
\begin{equation} \label{eq:lambdamu}
\lambda = \frac{u}{v} = \frac{a}{c} = \frac{b}{d}, \qquad \mu = \frac{r}{s} = \frac{a}{b} = \frac{c}{d}
\end{equation}
for the two factors $\bP^1$. 
\begin{align*}
\lambda & \mapsto \ \frac{\tau^\ast(a)}{\tau^\ast(c)} 
= \frac{-c}{a-c} = \frac{-v}{u-v} = \frac{1}{1-\lambda}  = \tau^\ast(\lambda), \\
\mu &  \mapsto  \ \frac{\tau^\ast(a)}{\tau^\ast(b)} 
= \frac{-c}{-d} = \mu.  \qedhere
\end{align*}
\end{proof}

\subsection{Enters the elliptic curve}

The cubic curve $E$ given by the cubic equation 
\[
E  = \{w^3 = uv (u-v)\} 
\]
is a branched $\mu_3$-torsor (with $\zeta \in \mu_3$ acting by $w \mapsto \zeta w$)
\[
\pi:  E \to \bP^1,\qquad \pi([u:v:w]) = [u:v].
\]
 The curve $E$ is an elliptic curve unless $p=3$ when it is rational. In any case, there are unique $\bF_q$-rational points $P_0,P_1,P_\infty \in E(\bF_q)$ with $\pi(P_i) = i$ for all $i \in \{0,1,\infty\}$. These points all lie on the line $w=0$:
\[
P_0 = [0:1:0], \quad P_1 = [1:1:0], \quad P_\infty = [1:0:0].
\] 

\begin{prop}
\label{prop:tauonE}
Let $p \not= 3$, and consider $E$ as an elliptic curve with $P_0$ as the $0$ for the group law. 
\begin{enumerate}
\item $P_1$ and $P_\infty$ are $3$-torsion elements with $P_\infty = 2 P_1$.
\item 
\label{propitem:tauonE}
Translation by $P_1$ defines an isomorphism $\tau_E: E \to E$  of order $3$
\[
\tau_E([u:v:w]) = [-v:u-v:w].
\]
lifting $\tau : \bP^1 \to \bP^1$, the action by the $3$-cycle $\tau = (01\infty)$ defined by \eqref{eq:tauonP1}. 
\item 
The $\mu_3$-torsor $E \to \bP^1$ ramifies exactly in $\{P_0,P_1,P_\infty\}$, i.e., we have a finite \'etale covering
\[
\pi : E \setminus \{P_0,P_1,P_\infty\} \to \bP^1 \setminus \{0,1,\infty\}.
\]
\end{enumerate}
\end{prop}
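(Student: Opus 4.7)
My plan is to treat the three parts in the stated order, with part (2) containing the main content.

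For part (1), I would observe that the line $\{w=0\} \subseteq \bP^2$ meets $E$ in the three distinct points $P_0, P_1, P_\infty$, each with multiplicity one, since $uv(u-v) = 0$ on this line has three simple zeros on $\bP^1$. By the standard geometric definition of the group law on a plane cubic, three collinear points sum to the identity; with identity $P_0$ this gives $P_0 + P_1 + P_\infty = O$, whence $P_\infty = -P_1$. To show $P_1$ is $3$-torsion, I would compute the tangent line to $E$ at $P_1 = [1:1:0]$ from the partial derivatives of $F = w^3 - uv(u-v)$; this yields the line $u = v$, and substituting $u = v$ into the equation of $E$ gives $w^3 = 0$, so the tangent intersects $E$ only at $P_1$ and with multiplicity $3$. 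Hence $3 P_1 = O$, and $P_\infty = -P_1 = 2 P_1$.

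For part (2), a direct substitution verifies that $(u,v,w) \mapsto (-v, u-v, w)$ preserves the defining equation $w^3 = uv(u-v)$, and iterating the formula three times returns $(u,v,w)$, so $\tau_E$ is an order-$3$ automorphism of $E$. Because it fixes the $w$-coordinate and $\pi$ forgets $w$, the map $\tau_E$ lifts the action of $\tau$ on $\bP^1$ from Lemma~\ref{lem:actionoftauonZ}, and a trivial evaluation gives $\tau_E(P_0) = \tau_E([0:1:0]) = [-1:-1:0] = P_1$. The main obstacle is to show that $\tau_E$ equals translation by $P_1$ \emph{exactly}, as opposed to its composition with one of the non-trivial order-$3$ automorphisms of $E$ fixing $O$ (namely the $\mu_3$-deck transformations $w \mapsto \zeta_3 w$). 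My preferred approach is a direct chord-and-tangent computation: for a generic point $P = (u_0, w_0)$ in the affine chart $v = 1$, one finds the third intersection $R$ of the line $\overline{P P_1}$ with $E$ by Vieta's formulas, and reflects $R$ through $O = P_0$ by taking the third intersection of $\overline{R P_0}$ with $E$. A short calculation yields $P + P_1 = \bigl(\tfrac{1}{1-u_0},\ \tfrac{-w_0}{1-u_0}\bigr)$, which matches the formula for $\tau_E$ rewritten in affine coordinates. Since $\tau_E$ and $t_{P_1}$ are morphisms $E \to E$ that agree on a dense open, they coincide.

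For part (3), I would simply inspect the fibers of $\pi$. The scheme-theoretic fiber over $[u_0:v_0]$ is $\Spec k[w]/(w^3 - u_0 v_0(u_0-v_0))$; if the right-hand side is nonzero then, since $p \neq 3$, the three cube roots are distinct and $\mu_3$ acts simply transitively, so $\pi$ is \'etale at all three preimages; if the right-hand side vanishes, we obtain a single point of multiplicity $3$, hence ramification. The vanishing locus of $uv(u-v)$ on $\bP^1$ is exactly $\{0,1,\infty\}$, with respective preimages $P_0, P_\infty, P_1$, finishing the proof.
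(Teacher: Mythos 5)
Your proof is correct, and it takes a genuinely different route from the paper on all three parts. For (1), you show $3P_1 = O$ by computing the tangent line at $P_1$ to be $u=v$ and observing it meets $E$ only at $P_1$ with multiplicity $3$; the paper instead notes that $\pi$ is totally ramified over $0,1,\infty$, so $3(P_i - P_j)$ is a difference of fibres and hence principal. For (2), the paper defines $\tau_E$ as translation by $P_1$, shows abstractly that it descends to a cyclic permutation of $\{0,1,\infty\}$ (hence equals $\tau$), and then derives the coordinate formula by passing to the Weierstra\ss\ model $y^2 - y = x^3$ via $x = -w/u$, $y = v/u$ and invoking Silverman's group-law algorithm; you instead start from the formula, verify it preserves the cubic and has order $3$, and then pin it down as $t_{P_1}$ by a direct chord--tangent computation in the original chart $v=1$. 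Your worry that the formula might differ from $t_{P_1}$ by a $\mu_3$-deck transformation is a genuine one (those also have order $3$ and send $P_0$ to $P_1$), and the explicit Vieta computation $P + P_1 = \bigl(\tfrac{1}{1-u_0}, \tfrac{-w_0}{1-u_0}\bigr)$ cleanly resolves it without leaving the given coordinates. For (3), the paper appeals to Riemann--Hurwitz, whereas you compute the fibres $\Spec k[w]/(w^3 - c)$ directly and use $p\neq 3$ to see they are \'etale for $c\neq 0$. Your route is more self-contained and avoids the change of coordinates and the reference to Silverman; the paper's is slightly shorter where it cites standard facts. Two small remarks: the collinearity criterion `three collinear points sum to $O$' requires $P_0$ to be a flex, which holds here (tangent at $P_0$ is $u=0$, meeting $E$ triply) but is used implicitly by both you and the paper; and at the very end the list `$P_0, P_\infty, P_1$' should read `$P_0, P_1, P_\infty$' to match the order $\{0,1,\infty\}$.
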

\begin{proof}
(1) The line $w = 0$ intersects $E$ in the divisor $P_0 + P_1 + P_\infty$, thus $P_\infty = - P_1$ with respect to the group law with $P_0 = 0$. The map $\pi : E \to \bP^1$ is totally ramified in $0,1,\infty$. Hence, for all $i,j \in \{0,1,\infty\}$, the divisor  $3(P_i - P_j)$ is a difference of fibres and thus linearly equivalent to $0$. It follows that with respect to the group structure on $E$ we have $3P_1 = 3P_\infty = 0$ and $P_\infty = 2 P_1$.

(2) The points $P_0,P_1, P_\infty$ are $\mu_3$-invariant. Therefore $\mu_3$ acts via automorphisms of $E$ as a group and translation by $P_1$ commutes with the action by $\mu_3$. Hence there is a unique commutative square
\[
\xymatrix@M+1ex{
E \ar[r]^{\tau_E} \ar[d]^\pi  & E \ar[d]^\pi  \\
\bP^1 \ar@{.>}[r]^{\tilde{\tau}} & \bP^1 }
\]
Since $\tau_E$ permutes $P_0 \leadsto P_1 \leadsto P_\infty \leadsto P_0$, the induced map 
$\tilde{\tau}$ does likewise with $0 \leadsto 1 \leadsto \infty \leadsto 0$ and thus agrees with $\tau$ acting by \eqref{eq:tauonP1} on $\bP^1$.

We now determine the formula for $\tau_E$.  
The Weierstra\ss-equation of the elliptic curve $E$ has the form 
\[
y^2 - y = x^3
\]
with $x = -w/u$ and $y = v/u$. In these coordinates $P_0 = [0:1:0]$ becomes the point at infinity.
Addition with $P_1 = (0,1)$ in $xy$-coordinates takes the form, see \cite{silverman} group law algorithm 2.3 for formulas, 
\[
\tau_E([u:v:w]) = \tau_E(x,y)  = (-\frac{x}{y},1-\frac{1}{y}) = (\frac{-w}{-v}, \frac{u-v}{-v}) = [-v:u-v:w]. 
\]
(3) This is clear: by the Riemann-Hurwitz formula there is no ramification left. 
\end{proof}

We now consider a second copy of $E$ with coordinates $[r:s:t]$ and equation 
\[
t^3 = rs(r-s)
\]
The $\mu_3$-torsor $j : Y \to Z$ is dominated by the (branched) $\mu_3 \times \mu_3$-torsor 
\[
\pi \times \pi: \ Y' := E \times E \to \bP^1 \times \bP^1 \simeq Z
\]
by means of the map $j': Y'  \to Y$ in $[a:b:c:d:z]$-coordinates
\begin{equation}
\label{eq:maphprime}
j'([u:v:w], [r:s:t]) =  [ur:us:vr:vs:-wt].
\end{equation}
This identifies $Y$ with the quotient by the 
antidiagonal $\mu_3$-action on $Y'=E \times E$, i.e., with respect to $\zeta \in \mu_3$ acting by 
\[
([u:v:w], [r:s:t])  \mapsto ([u:v: \zeta w], [r:s:\zeta^{-1}t]).
\]

We let $\tau$ act on $E \times E$ by 
\[
\tau := \tau_E \times \id = \Big(([u:v:w], [r:s:t]) \mapsto ( [-v:u-v:w] , [r:s:t])  \Big).
\]

\begin{cor}
Let $p \not= 3$.
The map $j' : Y' \to Y$ is $\tau$-equivariant.
\end{cor}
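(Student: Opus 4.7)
The statement is a purely computational compatibility between two explicit formulas, so the plan is simply to verify the equivariance directly on coordinates.

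First I would record the two actions being compared: on $Y' = E \times E$ the map $\tau = \tau_E \times \id$ is given by Proposition~\ref{prop:tauonE}\eqref{propitem:tauonE} as
\[
\tau\big([u:v:w],[r:s:t]\big) = \big([-v:u-v:w],[r:s:t]\big),
\]
while on $Y \subseteq \bP(N)$ the action of $\tau$ is given by \eqref{eq:tauonY} as
\[
\tau([a:b:c:d:z]) = [-c:-d:a-c:b-d:z].
\]
The map $j'$ is given in \eqref{eq:maphprime} by $j'([u:v:w],[r:s:t]) = [ur:us:vr:vs:-wt]$.

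Next I would compute $j'\circ\tau$ applied to a general point of $E\times E$: substituting $(u,v,w)\mapsto(-v,u-v,w)$ into $j'$ gives
\[
\big[-vr:-vs:(u-v)r:(u-v)s:-wt\big].
\]
Then I would compute $\tau\circ j'$ by applying the formula of \eqref{eq:tauonY} to $[ur:us:vr:vs:-wt]$, which yields exactly
\[
\big[-vr:-vs:ur-vr:us-vs:-wt\big].
\]
These two coordinate tuples are manifestly equal, so $j'\circ\tau = \tau\circ j'$ on all of $Y'$, which proves equivariance.

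There is no real obstacle here; the only thing to notice is that the formulas from Lemma~\ref{lem:actionoftauonZ} and Proposition~\ref{prop:tauonE} were set up precisely to make this check routine, and the sign in the last coordinate $-wt$ of $j'$ is preserved trivially because $\tau$ acts as the identity on the $z$-coordinate and on the second factor of $E\times E$.
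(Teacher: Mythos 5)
Your proof is correct and follows exactly the route the paper indicates: it verifies $j'\circ\tau=\tau\circ j'$ by substituting the explicit coordinate formulas from \eqref{eq:tauonY}, \eqref{eq:maphprime}, and Proposition~\ref{prop:tauonE}\eqref{propitem:tauonE}. You have simply written out the calculation that the paper leaves implicit.
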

\begin{proof}
This follows from \eqref{eq:tauonY}, \eqref{eq:maphprime} and Proposition~\ref{prop:tauonE}\eqref{propitem:tauonE}.
\end{proof}

The reduced preimage in $Y' = E \times E$ of the branch locus   is
\[
T' = j'^{-1}(T)_{\redu} = (\pi \times \pi)^{-1}(S)_{\redu} = \{P_0,P_1,P_\infty\} \times E \cup  E \times \{P_0,P_1,P_\infty\} \subseteq E \times E.
\]

\subsection{The open part}

We are ultimately interested in twists of 
\[
U = X \setminus \{\Delta_x \cdot \Delta_y \cdot z = 0\}.
\] 
Because of Lemma~\ref{lem:lks} we set 
\[
V := Y \setminus V((b-c)(a+d-c)(a+d-b)(a-d)(b+c-d)(b+c-a)z = 0),
\]
and immediately obtain the following corollary from Proposition~\ref{prop:linebundle}.

\begin{cor}
The linear projection $\bP(M) \dasharrow \bP(N)$ restricts to a geometric line bundle 
\[
\pr_N: U \to V.
\]
\end{cor}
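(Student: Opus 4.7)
The plan is to deduce the corollary directly from Proposition~\ref{prop:linebundle}, which produces the geometric line bundle $\pr_N : X \setminus \star_M \to Y$. It suffices to show $U = \pr_N^{-1}(V)$, since the restriction of a geometric line bundle over an open subscheme of the base is itself a geometric line bundle. First I would note that $\star_M = [1:1:1:1:1:1:0]$ has $z=0$ and hence already lies outside $U$, so that $\pr_N$ is well defined on all of $U$.

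The remaining point is a matching exercise with the table of Lemma~\ref{lem:lks}. The divisors $\{\Delta_x = 0\}$ and $\{\Delta_y = 0\}$ are the vanishing loci of the products of root-differences, and the table identifies these (up to signs, which do not affect the vanishing locus) as
\[
(x_0 - x_1)(x_\infty - x_1)(x_\infty - x_0) \ = \ (a+d-b)(a+d-c)(b-c)
\]
and
\[
(y_0 - y_1)(y_\infty - y_1)(y_\infty - y_0) \ = \ (b+c-a)(b+c-d)(a-d).
\]
Together with the factor $z$, these six linear forms are precisely the factors appearing in the definition of $V$. Consequently, the divisor removed from $X$ to form $U$ coincides with the $\pr_N$-preimage of the divisor removed from $Y$ to form $V$, and hence $U = \pr_N^{-1}(V)$, which is what was needed.

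I do not expect any real obstacle here: the statement is a bookkeeping verification once Proposition~\ref{prop:linebundle} is in hand. The only point requiring (trivial) attention is that a linear form and its negative cut out the same divisor, so the occasional sign discrepancies between the expressions in Lemma~\ref{lem:lks} and those in the definition of $V$ are harmless.
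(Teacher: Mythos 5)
Your proof is correct and matches the paper's intent: the paper states the corollary follows "immediately" from Proposition~\ref{prop:linebundle} together with the definition of $V$, which is motivated "because of Lemma~\ref{lem:lks}," and you have simply spelled out the bookkeeping—namely that $\star_M$ is excluded from $U$ since $z=0$ there, and that the linear forms cut out in $V$ are (up to sign) exactly the root-differences appearing in $\Delta_x\Delta_y z$, so that $U = \pr_N^{-1}(V)$.
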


The image of $V \subseteq Y$ in $Z \simeq \bP^1 \times \bP^1$ can best be described in terms of rational parameters $\lambda$ and $\mu$, see \eqref{eq:lambdamu}. The locus $z = 0$ is the preimage of $S$. The locus $\Delta_x = 0$ is given by the equation
\[
0 = \Delta_x = (b-c)(a+d-c)(a+d-b) = (us - vr)(ur+vs - vr)(ur + vs - us)
\]
which adds outside of $S$ the following divisors 
\[
\{\mu = \lambda\}, \qquad \{\mu = \frac{1}{1-\lambda}\}, \qquad \{\mu = 1 - \frac{1}{\lambda}\}.
\]
For $\Delta_y = 0$ we obtain in addition
\[
\{\mu = \frac{1}{\lambda}\}, \qquad \{\mu = 1-\lambda\}, \qquad \{\mu =  \frac{\lambda}{\lambda-1}\}.
\]
These are the graphs
\[
\Gamma_\sigma \subseteq \bP^1 \times \bP^1 \simeq Z
\]
of all the automorphisms $\sigma: \bP^1 \to \bP^1$ for all $\sigma \in S_3$ with respect to the action recalled in Section~\ref{sec:symm}. We set
\[
\Gamma = \bigcup_{\sigma \in S_3} \Gamma_\sigma.
\]
Now $V$ is the preimage under $j:Y \to Z$ of the open 
\[
W := Z \setminus (S \cup \Gamma).
\]
We furthermore set 
\[
H = \bigcup_{\sigma \in S_3} H_\sigma, \qquad \text{ with } H_\sigma := j^{-1}(\Gamma_\sigma)_{\redu},
\]
and similarly $H'$ and $H'_{\sigma}$ as reduced preimages of $H$ and $H_\sigma$ under $j': E \times E \to Y$.

\begin{lem}
\label{lem:tauonHGamma}
For all $\sigma \in S_3$ we have 
\[
\tau(H_{\sigma}) = H_{\sigma \tau^{-1}}, \quad \text{ and } \quad \tau(\Gamma_\sigma) = \Gamma_{\sigma \tau^{-1}}.
\]
\end{lem}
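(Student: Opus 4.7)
The plan is to prove the statement for $\Gamma_\sigma$ first by a direct calculation from the explicit $\tau$-action on $Z$, and then transfer the result to $H_\sigma$ using the $\tau$-equivariance of $j : Y \to Z$.

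First I would invoke Lemma~\ref{lem:actionoftauonZ}, which states that $\tau$ acts on $Z \simeq \bP^1 \times \bP^1$ as $\tau \times \id$, where on the first factor $\tau$ is the M\"obius transformation $\lambda \mapsto 1/(1-\lambda)$. By definition $\Gamma_\sigma$ is the graph of the automorphism $\sigma : \bP^1 \to \bP^1$, i.e.
\[
\Gamma_\sigma = \{(\lambda, \sigma(\lambda)) : \lambda \in \bP^1\}.
\]
Its image under $\tau \times \id$ is $\{(\tau(\lambda), \sigma(\lambda)) : \lambda \in \bP^1\}$, and substituting $\lambda' = \tau(\lambda)$ rewrites this as $\{(\lambda', (\sigma \tau^{-1})(\lambda')) : \lambda'\} = \Gamma_{\sigma \tau^{-1}}$.

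For $H_\sigma$, I would note that $j : Y \to Z$ is the restriction of the linear projection $\pr_L : \bP(N) \dasharrow \bP(L)$, which is $S_3$-equivariant because $L \subseteq N$ is $S_3$-stable and $S_3$ fixes the complementary coordinate $z$. In particular $j$ is $\tau$-equivariant, so
\[
\tau(j^{-1}(\Gamma_\sigma)) = j^{-1}(\tau(\Gamma_\sigma)) = j^{-1}(\Gamma_{\sigma\tau^{-1}}),
\]
and since $\tau$ acts by an automorphism of $Y$ it commutes with passing to the reduced subscheme, giving $\tau(H_\sigma) = H_{\sigma\tau^{-1}}$.

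The argument is essentially bookkeeping; the only subtle point is making the composition convention line up so that the twist on the right-hand side is $\tau^{-1}$ rather than $\tau$. A quick sanity check: for $\sigma = \id$ the diagonal $\Gamma_{\id} = \{\mu = \lambda\}$ must map to $\Gamma_{\tau^{-1}}$. Direct computation gives $\tau^{-1}(\lambda) = 1 - 1/\lambda$, so $\Gamma_{\tau^{-1}} = \{\mu = 1 - 1/\lambda\}$, which is indeed one of the six divisors in the explicit enumeration of $\Gamma$ preceding the lemma.
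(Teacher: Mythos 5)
Your proof is correct and is exactly the content behind the paper's one-line remark that the lemma is "a consequence of abstract nonsense on graphs." You spell out what the abstract nonsense says: for the action $\tau \times \id$ on $\bP^1 \times \bP^1$ one has $(\tau \times \id)(\Gamma_\sigma) = \{(\tau(\lambda),\sigma(\lambda))\} = \{(\lambda',\sigma\tau^{-1}(\lambda'))\} = \Gamma_{\sigma\tau^{-1}}$, and the statement for $H_\sigma = j^{-1}(\Gamma_\sigma)_{\redu}$ then follows from the $\tau$-equivariance of $j$ (which holds since the linear projection $\pr_L$ is $S_3$-equivariant, $L \subseteq N$ being an $S_3$-stable subspace) together with the fact that automorphisms commute with taking reduced subschemes. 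The sanity check with $\sigma = \id$, $\tau(\Gamma_{\id}) = \Gamma_{\tau^{-1}} = \{\mu = 1 - 1/\lambda\}$, confirms the composition convention. This matches the paper's intent; you have simply unpacked what it leaves implicit.
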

\begin{proof}
This is a consequence of abstract nonsense on graphs.
\end{proof}

\subsection{Intersections of components}
For the curves $C = S, \Gamma, T, H, T'$ and $H'$ we denote by $C_0$ the subvariety of points that lie in at least two components. For a point $P \in \Gamma_0 \setminus S$ we set
\[
n_P : = \#{\{\sigma \ ; \ P \in \Gamma_\sigma\}}.	
\]
Moreover, we set as reduced subvarieties
\[
\Gamma_n = \bigcup_{P \in \Gamma_0 \setminus S, \ n_P = n} P.
\]
Although the individual $P$ might not be defined over $\bF_q$, their union with a fixed number $n_P$ is.
\begin{lem}
\label{lem:geometryoffixedpoints}
In rational parameters $\lambda, \mu$ of $Z=\bP^1 \times \bP^1$ we have the following description of $\bar \bF_q$-rational points:
\begin{align*}
\Gamma_2(\bar \bF_q) & = \left\{
\begin{array}{cl}
\{-1, \frac{1}{2},2\} \times  \{-1, \frac{1}{2},2\} \qquad & p \not= 2,3, \\
\emptyset & p = 2, 3,
\end{array}
\right. \\[1ex]
\Gamma_3(\bar \bF_q) & = \left\{
\begin{array}{cl}
\{-\zeta_3,-\zeta_3^2\} \times \{-\zeta_3,-\zeta_3^2\} \quad & p \not= 3, \\
\emptyset & p=3,
\end{array}
\right. \\[1ex]
\Gamma_6(\bar \bF_q) & = \left\{
\begin{array}{cl}
\emptyset & p \not= 3, \\
\{(-1,-1)\}  \quad \qquad \qquad \qquad & p = 3. \\
\end{array}
\right. 
\end{align*}
\end{lem}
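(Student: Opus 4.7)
\begin{pro*}[Proof Plan]
The idea is to recognize that the combinatorics of the set of graphs $\Gamma_\sigma \subset \bP^1 \times \bP^1$ is controlled entirely by the stabilizers of the $S_3$-action on $\bP^1$. Using the rational parameters $\lambda, \mu$ of \eqref{eq:lambdamu}, the six graphs are the loci $\mu = \sigma(\lambda)$ for $\sigma \in S_3$, namely
\[
\mu \in \Big\{\lambda,\ \tfrac{1}{\lambda},\ 1-\lambda,\ \tfrac{\lambda}{\lambda-1},\ \tfrac{1}{1-\lambda},\ \tfrac{\lambda-1}{\lambda}\Big\}.
\]
The key observation is that for a point $P = (\lambda, \mu)$ in $\Gamma_0 \setminus S$, if $\sigma_1(\lambda) = \sigma_2(\lambda) = \mu$ then $\sigma_2^{-1}\sigma_1 \in \mathrm{Stab}_{S_3}(\lambda)$; hence
\[
n_P = \#\mathrm{Stab}_{S_3}(\lambda),
\]
which also forces $\mu$ to lie in the $S_3$-orbit of $\lambda$. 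So I just need to enumerate points of $\bP^1 \setminus \{0,1,\infty\}$ with nontrivial stabilizer and take the product with their orbit.

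Next I classify the possible stabilizers. The proper nontrivial subgroups of $S_3$ are the three transpositions and the unique order-$3$ subgroup. For each transposition $\sigma$, direct substitution into $\sigma(\lambda)=\lambda$ gives one fixed point outside $S$: $\lambda = -1$ for $(0\infty)$, $\lambda = 1/2$ for $(01)$, and $\lambda = 2$ for $(1\infty)$. For the 3-cycles, both $\sigma=(01\infty)$ and $\sigma^{-1}$ yield the same equation $\lambda^2-\lambda+1 = 0$, whose roots are the primitive sixth roots of unity $-\zeta_3$, $-\zeta_3^2$. In characteristic $\neq 2,3$, these are five distinct points outside $S$; the orbit of size $3$ among the transposition-fixed points is $\{-1, 1/2, 2\}$ and the orbit of size $2$ among the $3$-cycle fixed points is $\{-\zeta_3, -\zeta_3^2\}$, and no point has $S_3$-stabilizer. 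This yields the first two cases of the lemma, and simultaneously shows $\Gamma_6=\emptyset$ for $p\neq 3$.

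The work then reduces to analysing the small-characteristic degenerations, which is the only subtle step. In characteristic $2$, the three transposition-fixed points $\{-1, 1/2, 2\}$ all collapse into $S = \{0,1,\infty\}$ (since $-1 = 1$, $2 = 0$, and $1/2$ is undefined in this incarnation, resolved as $\infty$), so $\Gamma_2 = \emptyset$; meanwhile $\lambda^2-\lambda+1 = \lambda^2+\lambda+1$ still has two distinct roots in $\bF_4 \setminus \bF_2$ (equal to $-\zeta_3, -\zeta_3^2$ since $-1=1$), so the description of $\Gamma_3$ persists. In characteristic $3$, the reductions $-1=1/2=2$ coincide at $\lambda=-1$, and also $\lambda^2-\lambda+1 = (\lambda+1)^2$ so the $3$-cycle fixed points coincide with this same point; one checks directly that every $\sigma \in S_3$ satisfies $\sigma(-1) = -1$ in $\bF_3$, so $\mathrm{Stab}(-1) = S_3$ and $n_{(-1,-1)} = 6$, while no other $\lambda \notin S$ has nontrivial stabilizer. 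Hence $\Gamma_2 = \Gamma_3 = \emptyset$ and $\Gamma_6 = \{(-1,-1)\}$. The main obstacle is simply bookkeeping these collapses carefully; once done, taking the orbit-cross-orbit product against $\lambda$ and $\mu$ gives the stated sets.
\end{pro*}
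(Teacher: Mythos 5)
Your proposal is correct and follows essentially the same approach as the paper: identify that a point lying on two graphs $\Gamma_\sigma, \Gamma_{\sigma'}$ forces $\lambda$ to be a fixed point of the nontrivial element $\sigma^{-1}\sigma' \in S_3$, observe that $n_P$ equals the order of the stabilizer of $\lambda$, and catalogue the fixed points by stabilizer size. The paper dispatches the enumeration and the small-characteristic collapses in a single sentence, whereas you spell them out; the details you give are all correct.
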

and $\Gamma_n(\bar \bF_q) = \emptyset$ for $n = 4$, $5$ and $n \geq 7$.
\begin{proof}
If $P = (\lambda_0,\mu_0)  \in \Gamma_\sigma \cap \Gamma_{\sigma'}$ with $\sigma \not= \sigma'$, then $\sigma'(\lambda_0) = \mu_0 = \sigma(\lambda_0)$. Hence $\lambda_0$ and therefore $\mu_0$ are fixed points for non-trivial elements of the natural $S_3$ action on $\bP^1$. The fixed points are easily listed and catalogued according to the size of the stabilizer as given in the lemma.
\end{proof}

In the same way we define for a point $P \in H_0 \setminus T$ 
\[
n_P : = \abs{\{\sigma \ ; \ P \in H_\sigma\}},
\]
and set as reduced subvarieties
\[
H_n = \bigcup_{P \in H_0 \setminus T, \ n_P = n} P.
\]
Since $H_\sigma = j^{-1}(\Gamma_\sigma)$ we find $n_P = n_{j(P)}$ for all $P \in H_0 \setminus T$, in particular 
\[
H_n = j^{-1}(\Gamma_n).
\]

\begin{lem}
\label{lem:H3}
Let $p \not= 3$. Then, in terms of coordinates $\xi = z/(vs), \lambda = u/v, \mu = r/s$,
\[
H_3(\bar \bF_q) \simeq \{-1,-\zeta_3,-\zeta_3^2\} \times  \{-\zeta_3,-\zeta_3^2\} \times \{-\zeta_3,-\zeta_3^2\}.
\]
The induced action of $\tau F$ on $H_3(\bar \bF_q)$ has no fixed points.
\end{lem}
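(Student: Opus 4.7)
The plan is to combine $H_3 = j^{-1}(\Gamma_3)$ (noted in the paragraph preceding the lemma) with the explicit $\mu_3$-torsor equation for $j$ and the formulas of Lemma~\ref{lem:actionoftauonZ}, and finally to analyse the fixed-point equation for $\tau F$ in coordinates.

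By Lemma~\ref{lem:geometryoffixedpoints}, $\Gamma_3(\bar\bF_q) = \{-\zeta_3,-\zeta_3^2\} \times \{-\zeta_3,-\zeta_3^2\}$. In the bihomogeneous coordinates $([u:v],[r:s])$ on $Z \simeq \bP^1 \times \bP^1$, the torsor equation $z^3 = (a-c)b(d-c)$ becomes $z^3 = uv(u-v) \cdot rs(s-r)$, and dehomogenising by $vs$ yields $\xi^3 = \lambda(\lambda-1)\,\mu(1-\mu)$. For $\lambda_0 \in \{-\zeta_3,-\zeta_3^2\}$ the identity $\lambda_0^2 - \lambda_0 + 1 = 0$ gives $\lambda_0(\lambda_0-1) = -1$, and symmetrically $\mu_0(1-\mu_0) = 1$. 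Hence $\xi^3 = -1$ above each of the four points of $\Gamma_3$, so each fibre consists of the three cube roots $\{-1,-\zeta_3,-\zeta_3^2\}$ of $-1$, yielding the asserted product description of $H_3(\bar\bF_q)$.

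For the fixed-point claim I next record how $\tau$ acts on $\xi$. Since $\tau$ fixes $z$ and sends $(v,s)$ to $(u-v,s)$ by Lemma~\ref{lem:actionoftauonZ}, the rational function $\xi = z/(vs)$ transforms by the factor $v/(u-v) = 1/(\lambda-1)$. Evaluated at $\lambda_0 = -\zeta_3$ this equals $1/\zeta_3^2 = \zeta_3$, and at $\lambda_0 = -\zeta_3^2$ it equals $\zeta_3^2$; in both cases a nontrivial cube root of unity. This bookkeeping of the twisting of $\xi$ by $\tau$ is the main subtle point; once it is in place the rest is a direct check.

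Combining these data, a point $(\xi_0,\lambda_0,\mu_0) \in H_3(\bar\bF_q)$ is $\tau F$-fixed exactly if $1/(1-\lambda_0^q) = \lambda_0$, $\mu_0^q = \mu_0$, and $\xi_0^q/(\lambda_0^q-1) = \xi_0$. Using $\lambda_0^2 = \lambda_0 - 1$ the first condition simplifies to $\lambda_0^q = \lambda_0$, so together with the second it forces $\zeta_3 \in \bF_q$, i.e., $q \equiv 1 \pmod 3$. Under this assumption $\xi_0 \in \bF_q$ as well, so $\xi_0^q = \xi_0$, and the third condition collapses to $\lambda_0 - 1 = 1$, i.e., $\lambda_0 = 2$. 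But $\lambda_0^2 - \lambda_0 + 1$ evaluated at $2$ equals $3$, nonzero in characteristic $\ne 3$, contradicting $\lambda_0^2 - \lambda_0 + 1 = 0$. Hence $\tau F$ has no fixed points on $H_3$.
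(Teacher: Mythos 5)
Your proof is correct and follows essentially the same route as the paper's: dehomogenise the torsor equation, evaluate $\lambda_0(\lambda_0-1)=-1$ and $\mu_0(1-\mu_0)=1$ at the roots of $x^2-x+1$ to get $\xi^3=-1$, track the multiplier $1/(\lambda-1)$ on $\xi$ under $\tau$, and derive the contradiction from the fixed-point equations. The only cosmetic difference is that the paper writes the multiplier as $-\lambda$ and reaches $\lambda_0=-1$, whereas you keep it as $1/(\lambda_0-1)$ and reach $\lambda_0=2$; these are equivalent conditions and both contradict $\lambda_0^2-\lambda_0+1=0$ when $p\neq 3$.
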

\begin{proof}
The fibres above $(\lambda,\mu)$ with $\lambda,\mu \not= \infty$ are roots of 
\[
\xi^3 = \lambda (\lambda - 1) \cdot \mu (1-\mu).
\]
Evaluating for $(\lambda,\mu) \in \{-\zeta_3,-\zeta_3^2\} \times \{-\zeta_3,-\zeta_3^2\}$ results in each case in the equation
\[
\xi^3 = - 1,
\]
hence the description of $H_3$ as given in the lemma.

The points $\lambda = -\zeta_3, -\zeta_3^2$ are the fixed points for $\tau$ acting on $\bP^1$. The action on $\xi$ is as follows:
\[
\tau^\ast(\xi) = \frac{\tau^\ast(z)}{\tau^\ast(vs)} = \frac{z}{(u-v)s} = \xi \frac{v}{u-v} = \xi \frac{1}{\lambda - 1}.
\]
For $\lambda = -\zeta_3, -\zeta_3^2$ the factor $1/(\lambda -1)$ equals $-\lambda \in \mu_3$. If $(\xi,\lambda,\mu)$ is a fixed point under $\tau F$, then
\[
(\xi,\lambda,\mu) = \tau F(\xi, \lambda,\mu) = \tau(\xi^q,\lambda^q, \mu^q) = (-\lambda^q \xi^q, \lambda^q, \mu^q).
\]
Hence $\lambda^q = \lambda$, i.e., $\zeta_3 \in \bF_q^\times$, hence also $\xi^q = \xi$, and therefore equating the first coordinate yields $\lambda = -1$, a contradiction.
\end{proof}

\subsection{Summary}

Here is a diagram summarizing the varieties and maps considered above:
\begin{figure}[H]
\[
\xymatrix@M+1ex{
\bP(M) \ar@{-->}[dd] \ar@{}[r]|{\displaystyle \supseteq} & X \setminus \star_M \ar[dd]_{\pr_M} & 
\ar@{}[r]|{\displaystyle \supseteq}  & U \ar[dd] & &  \\
& & Y'=E \times E \ar[dd]_{\pi \times \pi} \ar[dl]_{j'}  & \ar@{}[r]|{\displaystyle \supseteq}  & V'  \ar[dl]  &
T' \cup H' \\
\bP(N)  \ar@{-->}[d] \ar@{}[r]|{\displaystyle \supseteq} & Y \ar[d]_{j} & \ar@{}[r]|{\displaystyle \supseteq}  & V \ar[d] & T \cup H & \\ 
\bP(L) \ar@{}[r]|{\displaystyle \supseteq} & Z \ar@{}[r]|{\displaystyle \simeq} & \bP^1 \times \bP^1 
\ar@{}[r]|{\displaystyle \supseteq}  & W & S \cup \Gamma & 
}
\]
\label{diagram}
\caption{Diagram of varieties used to describe $U$}
\end{figure}
Note that the $3$-cycle $\tau = (01\infty)$ acts in a compatible way on the diagram, including compatibility with the torsor structures. 

\section{Cohomology}
\label{sec:cohomology}

\subsection{Cohomology with values in a Grothendieck group}

The $3$-cycle $\tau = (01\infty)$ generates the alternating group $A_3 \subseteq S_3$. 
We consider cohomology by formally taking the alternating sum as an object in the Grothendieck group of $A_3 \times \Gal(\bar \bF_q/\bF_q)$-representations with $\ov{\bQ}_\ell$-coefficients: for any variety over $\bF_q$ with $A_3$-action we set
\[
\rH^\ast_{\rc}(-) := \sum_{i} (-1)^i \left[\rH^i_{\rc}(-_{\bar \bF_q},\ov{\bQ}_\ell)\right]
\]
as a virtual $A_3 \times \Gal(\bar \bF_q/\bF_q)$-representation. Here we denote by $[-]$ the class  of a representation in the Grothendieck group. Recall that the cohomology of $\bA^1$ agrees with  the inverse of the Tate twist
\[
\rH^\ast_{\rc}(\bA^1) = \left[\rH^2_{\rc}(\bA^1_{\bar \bF_q},\ov{\bQ}_\ell) \right] =  \left[\ov{\bQ}_\ell(-1) \right],
\]
and, since the generator in degree $2$ is the fundamental class, moreover has trivial  $\tau$-action. 
\begin{lem}
\label{lem:cohU}
$\rH^\ast_{\rc}(U) =  \left[\ov{\bQ}_\ell(-1) \right] \cdot \big(\rH_{\rc}^\ast(Y) - \rH_{\rc}^\ast(S) - \rH_c^\ast(H \setminus T) \big)$.
\end{lem}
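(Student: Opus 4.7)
\begin{pro*}[Proof sketch]
The plan is to combine two standard tools: the cohomology of an $\bA^1$-bundle and the excision long exact sequence, both applied equivariantly so that the identities hold in the Grothendieck group of $A_3 \times \Gal(\bar\bF_q/\bF_q)$-representations.

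First I would reduce from $U$ to $V$. By the corollary to Proposition~\ref{prop:linebundle}, the linear projection $\pr_N \colon U \to V$ is a geometric line bundle, and its formation is $A_3$- and Frobenius-equivariant since the projection $\bP(M) \dasharrow \bP(N)$ is induced by an equivariant subspace inclusion $N \subseteq M$. For an $\bA^1$-bundle $f$, one has $Rf_! \ov{\bQ}_\ell = \ov{\bQ}_\ell(-1)[-2]$, hence $\rH^i_{\rc}(U_{\bar\bF_q}, \ov{\bQ}_\ell) \simeq \rH^{i-2}_{\rc}(V_{\bar\bF_q}, \ov{\bQ}_\ell)(-1)$ as $A_3 \times \Gal$-modules. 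Taking alternating sums yields
\[
\rH^\ast_{\rc}(U) = [\ov{\bQ}_\ell(-1)] \cdot \rH^\ast_{\rc}(V).
\]

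Next I would identify $V$ as an open complement inside $Y$. By definition $V = j^{-1}(W)$ with $W = Z \setminus (S \cup \Gamma)$, so $V = Y \setminus (T \cup H)$ as reduced subschemes, using $T = j^{-1}(S)_{\redu}$ and $H = j^{-1}(\Gamma)_{\redu}$. The open--closed excision triangle gives
\[
\rH^\ast_{\rc}(V) = \rH^\ast_{\rc}(Y) - \rH^\ast_{\rc}(T \cup H)
\]
in the Grothendieck group. Applying excision once more to the closed subvariety $T \subseteq T \cup H$ with open complement $(T \cup H) \setminus T = H \setminus (T \cap H) = H \setminus T$ (where by $T$ inside $H$ we mean $H \cap T$), I get
\[
\rH^\ast_{\rc}(T \cup H) = \rH^\ast_{\rc}(T) + \rH^\ast_{\rc}(H \setminus T).
\]
Finally, the isomorphism $j|_T \colon T \xrightarrow{\sim} S$ of \eqref{eq:compareST} is $\tau$- and Frobenius-equivariant (it is the restriction of the equivariant map $j$), which yields $\rH^\ast_{\rc}(T) = \rH^\ast_{\rc}(S)$. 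Combining the three identities gives the claim.

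There is no serious obstacle: the only thing to verify carefully is that every map in sight (the line bundle $\pr_N$, the inclusions $T, H \hookrightarrow Y$, and the isomorphism $j|_T$) is equivariant for $A_3$ and for the absolute Galois group, so that the additivity of $\rH^\ast_{\rc}$ in excision triangles and the projection formula for $\pr_N$ hold in the Grothendieck group of $A_3 \times \Gal(\bar\bF_q/\bF_q)$-representations. This equivariance is already built into the constructions in Section~\S\ref{sec:geometry}.
\end{pro*}
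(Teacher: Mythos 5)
Your argument is correct and follows essentially the same route as the paper: reduce $U$ to $V$ via the $\bA^1$-bundle, then cut $Y$ into the pieces $V$, $H\setminus T$, and $T\simeq S$. The only cosmetic difference is that you handle the line-bundle step by directly invoking $Rf_!\ov{\bQ}_\ell=\ov{\bQ}_\ell(-1)[-2]$, whereas the paper reduces to the trivial bundle via a constructible decomposition and applies K\"unneth, and you split the paper's single three-piece constructible decomposition $Y=V\sqcup(H\setminus T)\sqcup T$ into two successive excision steps; both yield the same identity in the Grothendieck group.
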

\begin{proof}
The constructible decomposition $V \cup  (H \setminus T) \cup T= Y$ shows
\[
\rH_{\rc}^\ast(V) = \rH_{\rc}^\ast(Y) - \rH_{\rc}^\ast(T) - \rH_c^\ast(H \setminus T).
\]
Since $T \simeq S$ by \eqref{eq:compareST}, we are left to prove
\[
\rH^\ast_{\rc}(U) =  \left[\ov{\bQ}_\ell(-1) \right] \cdot \rH^\ast_{\rc}(V).
\]
This holds more generally for any geometric line bundle and the argument is recalled for the convenience of the reader. The assertion is local with respect to a constructible decomposition of the base $V$. We may therefore assume that the bundle is trivial. Then the K\"unneth-formula  yields
\[
\rH^\ast_{\rc}(U) =  \rH_{\rc}^\ast(\bA^1 \times V) = \rH_{\rc}^\ast(\bA^1) \cdot \rH^\ast_{\rc}(V) 
= \left[\ov{\bQ}_\ell(-1) \right] \cdot \rH^\ast_{\rc}(V). \qedhere
\]
\end{proof}

We denote the $\mu_3$-invariants on cohomology for the antidiagonal $\mu_3$-action on $E \times E$ by
\[
\rH^\ast_{\rc}(E \times E)^{\mu_3} = \sum_{i=0}^4 (-1)^i \left[ \rH^i_{\rc}((E \times E)_{\bar \bF_q},\ov{\bQ}_\ell)^{\mu_3}\right],
\]
and similarly for $\mu_3$-stable and $\tau$-stable locally closed subvarieties defined over $\bF_q$.
Since the torsor action commutes (resp.\ is Galois conjugated) with the action by $A_3$ (resp.\ Galois action by $\Gal(\bar \bF_q/\bF_q)$), we obtain a well defined object in the Grothendieck group.

\begin{lem} 
\label{lem:cohY}
We have the following identities:
\begin{enumerate}
\item
\label{lemitem:cohYsep}
If  $p \not= 3$, then
\[
\rH_{\rc}^\ast(Y) = \rH^\ast_{\rc}(E \times E)^{\mu_3}.
\]
\item
\label{lemitem:cohYinsep}
If $p=3$, then 
\[
\rH_{\rc}^\ast(Y) = \rH^\ast_{\rc}(\bP^1 \times \bP^1).
\]
\end{enumerate}
\end{lem}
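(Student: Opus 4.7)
\begin{pro*}[Proof plan]

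The argument splits by the characteristic.

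\textbf{Case $p \neq 3$.} The preceding subsection exhibits $j' : Y' = E \times E \to Y$ as the quotient by the antidiagonal $\mu_3$-action. Because $3$ is invertible in $\ov{\bQ}_\ell$ (and, since $p \neq 3$, even in $\bF_q$), the averaging operator $\frac{1}{3}\sum_{\zeta \in \mu_3} \zeta^*$ is a well-defined projector onto the $\mu_3$-invariants. Standard \'etale-cohomological descent for a finite group quotient then yields, level by level,
\[
\rH^i_{\rc}(Y_{\bar \bF_q}, \ov{\bQ}_\ell) \ = \ \rH^i_{\rc}((E \times E)_{\bar \bF_q}, \ov{\bQ}_\ell)^{\mu_3},
\]
and summing with the appropriate signs in the Grothendieck group gives the stated identity. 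The formula holds even though the $\mu_3$-action has the nine fixed points $\{P_0, P_1, P_\infty\} \times \{P_0, P_1, P_\infty\}$; with $\ov{\bQ}_\ell$-coefficients no ramification correction is required.

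\textbf{Case $p = 3$.} In characteristic $3$ the equation $z^3 = (a-c)b(d-c)$ defining $j : Y \to Z$ is purely inseparable, since $z \mapsto z^3$ is the absolute Frobenius. Locally, $j$ takes the form $\Spec A[z]/(z^3 - f) \to \Spec A$, and over each geometric point of $\Spec A$ the fibre is a single non-reduced point with purely inseparable residue extension. Hence $j$ is finite, surjective and radicial, i.e., a universal homeomorphism, and so its reduced image induces an equivalence of the small \'etale sites of $Y$ and $Z$. By the topological invariance of \'etale cohomology under universal homeomorphisms,
\[
\rH^i_{\rc}(Y_{\bar \bF_q}, \ov{\bQ}_\ell) \ = \ \rH^i_{\rc}(Z_{\bar \bF_q}, \ov{\bQ}_\ell) \ = \ \rH^i_{\rc}((\bP^1 \times \bP^1)_{\bar \bF_q}, \ov{\bQ}_\ell),
\]
and the second claim follows.

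The only genuine subtlety is invoking the invariant-cohomology formula in the first case for a non-free finite group action; but this is a standard fact for $\ov{\bQ}_\ell$-coefficients once $|G|$ is invertible in the coefficient ring, and the argument proceeds by averaging exactly as in the classical topological setting. The second case is essentially formal once one recognises $j$ as a Frobenius-type universal homeomorphism.

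\end{pro*}
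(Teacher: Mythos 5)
Your proof of part (2) is essentially identical to the paper's: both recognize that $j\colon Y\to Z$ is a purely inseparable degree-$3$ finite morphism, hence a universal homeomorphism, and invoke topological invariance of \'etale cohomology.

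For part (1) you take a genuinely different route. The paper stratifies $Y$ into the open locus $Y\setminus T_0$, over which $j'\colon E\times E\to Y$ is finite \'etale, and the finite set $T_0$ of branch points; it identifies $\rH^*_{\rc}(Y\setminus T_0)$ with $\rH^*_{\rc}(E\times E\setminus T'_0)^{\mu_3}$ by torsor descent and then observes that the fixed-point contributions $\rH^*_{\rc}(T_0)$ and $\rH^*_{\rc}(T'_0)^{\mu_3}$ cancel in the Grothendieck group because $j'$ restricts to an isomorphism $T'_0\simeq T_0$ with trivial $\mu_3$-action. You instead invoke the general theorem that for a quotient $\pi\colon X'\to X=X'/G$ of a variety by a finite group, with coefficients in which $\abs{G}$ is invertible, $\rH^i_{\rc}(X,\ov{\bQ}_\ell)\cong\rH^i_{\rc}(X',\ov{\bQ}_\ell)^G$ even when the action is not free. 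This is correct: $\pi$ is finite hence proper, the averaging idempotent splits $\pi_*\ov{\bQ}_\ell\cong\ov{\bQ}_\ell\oplus\ker$, and the stalk computation shows $(\pi_*\ov{\bQ}_\ell)^{\mu_3}=\ov{\bQ}_\ell$. Your approach is more abstract and arguably cleaner, whereas the paper's explicit stratification makes visible exactly why the nine non-free points cause no trouble; the latter also sets up notation ($T_0$, $T'_0$) reused elsewhere. One small slip in your phrasing: you parenthetically note that $p\neq 3$ makes $3$ invertible in $\bF_q$, but this is irrelevant — invertibility of $3$ in the coefficient field $\ov{\bQ}_\ell$ is what the averaging argument needs and that holds always. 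The hypothesis $p\neq 3$ is used for a different reason: it ensures $\mu_3$ is an \'etale group scheme so that $j'$ really is a quotient by a finite group acting through honest automorphisms; when $p=3$ the group scheme $\mu_3$ is infinitesimal and the whole finite-group-quotient framework does not apply, which is why part (2) proceeds differently.
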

\begin{proof}
(1) Let $p \not= 3$. 
The $\mu_3$-torsor $Y' = E \times E  \to Y$ is finite \'etale outside the set of fixed points $T'_0$. (Note that having ramification only in codimension $2$ is no contradiction since $Y$ is not regular in the branch points of $j': Y' \to Y$.) Thus pull back identifies 
\[
\rH_{\rc}^\ast(Y \setminus T_0) = \rH^\ast_{\rc}(E \times E \setminus T'_0)^{\mu_3}.
\]
We therefore have 
\begin{align*}
\rH_{\rc}^\ast(Y) & = \rH_{\rc}^\ast(Y \setminus T_0) +  \rH_{\rc}^\ast(T_0)   = \rH^\ast_{\rc}(E \times E \setminus T'_0)^{\mu_3}   + \rH_{\rc}^\ast(T_0) \\
& = \rH^\ast_{\rc}(E \times E)^{\mu_3} - \rH^\ast_{\rc}(T'_0)^{\mu_3}  + \rH_{\rc}^\ast(T_0)   = \rH^\ast_{\rc}(E \times E)^{\mu_3}.
\end{align*}

(2) Let now $p =3$. The $\mu_3$-torsor $Y \to Z$ is now purely inseparable and the claim follows from topological invariance of \'etale cohomology.
\end{proof}

Let us abbreviate for all $\sigma \in S_3$ the smooth part of the divisor $H_\sigma$ as part of the divisor $H \cup T$ (this is actually an irreducible component, but we don't need that) by
\[
H_\sigma^0 := H_\sigma \setminus (T \cup H_0).
\]
Similarly $\Gamma_\sigma^0$ denotes the smooth part $\Gamma_\sigma \setminus (S \cup \Gamma_0)$ of $\Gamma_\sigma$ as a component of $S \cup \Gamma$.

\begin{lem}
\label{lem:cohHminusT}
We have the following identities:
\begin{enumerate}
\item
\label{lemitem:cohHminusTsep}
If  $p \not= 3$, then
\[
\rH_{\rc}^\ast(H \setminus T) = \sum_{\sigma \in S_3} \rH_{\rc}^\ast(H_\sigma^0) +
\sum_{n \geq 2} \rH_{\rc}^\ast(H_n).
\]
\item
\label{lemitem:cohHminusTinsep}
If $p=3$, then 
\[
\rH_{\rc}^\ast(H \setminus T) = \sum_{\sigma \in S_3} \rH_{\rc}^\ast(\Gamma^0_\sigma) +
\sum_{n \geq 2} \rH_{\rc}^\ast(\Gamma_n).
\]
\end{enumerate}
\end{lem}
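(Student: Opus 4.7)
The plan is to apply additivity of compactly supported \'etale cohomology on locally closed stratifications, together with topological invariance of \'etale cohomology in the purely inseparable case (for part (2)).

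First, I would observe that $H \setminus T$ admits the Galois-equivariant stratification
\[
H \setminus T \ = \ \bigsqcup_{\sigma \in S_3} H_\sigma^0 \ \sqcup \ \bigsqcup_{n \geq 2} H_n
\]
into locally closed subvarieties: by definition, every point $P$ of $H \setminus T$ lies on at least one component $H_\sigma$, and the integer $n_P$ counting the components through $P$ determines its stratum---either $P$ lies in the open part $H_\sigma^0$ of the unique component through it (when $n_P = 1$), or $P$ belongs to the closed stratum $H_{n_P}$ (when $n_P \geq 2$). Although individual $H_\sigma^0$ or individual $\bar\bF_q$-points of $H_n$ need not be defined over $\bF_q$, each finite union indexed by a Galois-invariant set---all of $S_3$, respectively a fixed value of $n$---is $\bF_q$-rational. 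Moreover, by Lemma~\ref{lem:tauonHGamma}, the $3$-cycle $\tau$ permutes $\{H_\sigma^0\}_{\sigma \in S_3}$ and stabilizes each $H_n$ setwise. Hence the stratification is compatible with the $A_3 \times \Gal(\bar\bF_q/\bF_q)$-action, and the sums on the right-hand side make sense in the Grothendieck group.

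For part (1), it now suffices to apply the standard additivity of $\rH_{\rc}^\ast(-)$: any closed--open pair $Z \subseteq X \supseteq U$ yields the identity $\rH_{\rc}^\ast(X) = \rH_{\rc}^\ast(U) + \rH_{\rc}^\ast(Z)$ in the Grothendieck group via the long exact sequence of cohomology with compact support. Iterating over the stratification above delivers the formula.

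For part (2), the additional input is that in characteristic $3$ the $\mu_3$-torsor $j : Y \to Z$ is purely inseparable, so its restrictions $H_\sigma \to \Gamma_\sigma$, $H_\sigma^0 \to \Gamma_\sigma^0$, and $H_n \to \Gamma_n$ (using $H_\sigma = j^{-1}(\Gamma_\sigma)_{\redu}$ and consequently $H_n = j^{-1}(\Gamma_n)$) are universal homeomorphisms. Topological invariance of \'etale cohomology then identifies their compactly supported cohomology classes term by term, converting part (1) into the desired formula of part (2). I do not expect any substantial obstacle: the argument is bookkeeping plus two well-established facts, and the only point requiring care is verifying Galois-equivariance of the stratification so that each sum on the right-hand side represents a genuine class in the Grothendieck group of $A_3 \times \Gal(\bar\bF_q/\bF_q)$-representations.
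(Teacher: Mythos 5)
Your proof is correct and follows essentially the same approach as the paper: both rely on the constructible decomposition $H \setminus T = \bigcup_{\sigma} H_\sigma^0 \cup \bigcup_{n \geq 2} H_n$ together with additivity of compactly supported cohomology, and for part (2) both invoke topological invariance of \'etale cohomology along the purely inseparable map $j$. The paper applies topological invariance once at the level of $\rH^*_{\rc}(H \setminus T) = \rH^*_{\rc}(\Gamma \setminus S)$ and then decomposes $\Gamma \setminus S$, whereas you decompose $H \setminus T$ first and apply topological invariance term by term---a harmless reordering of the same steps.
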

\begin{proof}
(1) This is obvious from the constructible decomposition
\[
H \setminus T = \bigcup_{\sigma \in S_3} H_\sigma^0 \cup \bigcup_{n \geq 2} H_n.
\]

Assertion (2) follows by the same argument applied to $\Gamma \setminus S$ 
using that $\rH_{\rc}^\ast(H \setminus T) = \rH_{\rc}^\ast(\Gamma \setminus S)$ by topological invariance of \'etale cohomology.
\end{proof}

\subsection{Cohomology of the elliptic curve}
\label{sec:coh_ell_curve}
Let $p \not= 3$. 
The elliptic curve $E/\bF_q$ is the base change $E = \cE \otimes_{\bZ[1/3]} \otimes  \bF_q$ of the smooth integral model over $\Spec(\bZ[1/3])$
\[
\cE = \{Y(Y-Z)Z = X^3\} \subseteq \bP^2_{\bZ[1/3]}.
\]
The generic fibre $\cE_{\bQ}$ is an elliptic curve over $\bQ$ with CM by $\bZ[\zeta_3]$.
First, we consider cohomology of the geometric generic fibre as $\mu_3 \subseteq \Aut(\cE)$-module and  as $\Gal(\ov \bQ/\bQ)$-module. For this purpose we recall some CM-theory for the convenience of the reader. Complex uniformisation yields
\[
\cE(\bC) \simeq \bC/\bZ[\zeta_3],
\]
which follows from CM by $\bZ[\zeta_3]$ since $\bQ(\zeta_3)$ has class number $1$, and hence all lattices with an action by $\bZ[\zeta_3]$ are isomorphic to $\bZ[\zeta_3]$. This computes  
\[
\bZ[\zeta_3] \otimes \ov{\bQ}_\ell \simeq \rH^1(\cE_{\bar \bQ}, \ov{\bQ}_\ell) 
\]
as $\mu_3$-module. It follows that after fixing a faithful character  $\psi : \mu_3 \inj \ov{\bQ}_\ell^\times$
we have $1$-dimensional eigenspaces for $r= 1,2$ 
\[
\rH(\psi^r) \subset  \rH^1(\cE_{\bar \bQ}, \ov{\bQ}_\ell)
\]
on which $\mu_3$ acts by character $\psi^r$, and 
\[
\rH^1(\cE_{\bar \bQ}, \ov{\bQ}_\ell) =  \rH(\psi) \oplus \rH(\psi^2)
\]
Now $\sigma \in \Gal(\ov \bQ/\bQ)$ either fixes the eigenspaces $\rH(\psi^r)$ if $\sigma(\zeta_3) = \zeta_3$ or interchanges the summands if $\sigma(\zeta_3) = \zeta_3^{-1}$. This means that there are characters (these are the Hecke characters of Remark~\ref{rmk:heuristicerrorterm})
\[
\alpha, \beta :  \Gal(\ov{\bQ}/\bQ(\zeta_3)) \to \ov{\bQ}_\ell^\times 
\]
such that $\Gal(\ov{\bQ}/\bQ(\zeta_3))$ acts by $\alpha$ on $\rH(\psi)$ and by $\beta$ on $\rH(\psi^2)$. We rename the eigenspaces as
\[
\rH_\alpha = \rH(\psi), \quad \text{ and } \quad \ov{\rH}_\beta = \rH(\psi^2),
\]
with the bar indicating that on this summand $\mu_3$ acts via $\bar{\psi} = \psi^2$.

Note that if we take the inverse $\mu_3$-action, i.e., by precomposing with $\zeta \mapsto \zeta^{-1}$, then we have
\[
\rH^1(\cE_{\bar \bQ},\ov{\bQ}_\ell) = \ov{\rH}_\alpha \oplus \rH_\beta,
\]
according to the bar-convention for indicating the character by which $\mu_3$ acts.

\begin{lem}
\label{lem:cohEtimesEmu3}
Let $\mu_3$ act antidiagonally on $\cE \times \cE$. Then 
\[
\rH^i(\cE_{\bar{\bQ}} \times \cE_{\bar{\bQ}}, \ov{\bQ}_\ell)^{\mu_3} = \left\{
\begin{array}{ll}
\ov{\bQ}_\ell &  i = 0, \\[1ex]
0 & i = 1,3, \text{ and } i \geq 5, \\[1ex]
\ov{\bQ}_\ell(-1)  \oplus (H_\alpha \otimes \ov{\rH}_\alpha) \oplus (\ov{\rH}_\beta \otimes \rH_\beta) \oplus \ov{\bQ}_\ell(-1) & i = 2, \\[1ex] 
\ov{\bQ}_\ell(-2) & i = 4,\\
\end{array}
\right.
\]
as $\Gal(\ov{\bQ}/\bQ(\zeta_3))$-representation.
\end{lem}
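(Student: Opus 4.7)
\begin{pro*}[Proof Sketch]
The plan is to apply the Künneth formula for $\cE \times \cE$ over $\bar\bQ$ and then identify the $\mu_3$-invariants in each eigencharacter by bookkeeping the $\mu_3$-actions computed in Section~\ref{sec:coh_ell_curve}. Since $\cE$ is smooth projective of dimension $1$, its nonzero cohomology is concentrated in degrees $0,1,2$ and the Künneth decomposition gives, as $\Gal(\ov\bQ/\bQ(\zeta_3))$-modules with $\mu_3$-action,
\[
\rH^i(\cE_{\bar\bQ} \times \cE_{\bar\bQ}, \ov\bQ_\ell) = \bigoplus_{p+q=i} \rH^p(\cE_{\bar\bQ},\ov\bQ_\ell) \otimes \rH^q(\cE_{\bar\bQ},\ov\bQ_\ell).
\]

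The first step is to record the $\mu_3$-eigendecompositions. On the first factor we have $\rH^0 = \ov\bQ_\ell$ and $\rH^2 = \ov\bQ_\ell(-1)$, both with trivial $\mu_3$-action, and $\rH^1 = \rH_\alpha \oplus \ov{\rH}_\beta$ with $\mu_3$ acting by $\psi,\psi^2$ respectively. For the second factor the action is via $\zeta \mapsto \zeta^{-1}$, which exchanges the roles of $\psi$ and $\psi^2$: we have $\rH^1 = \ov{\rH}_\alpha \oplus \rH_\beta$ with $\mu_3$ acting by $\psi^2,\psi$ respectively, while the contributions in degrees $0$ and $2$ remain trivial.

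Next I take $\mu_3$-invariants degree by degree. In odd total degree $i=1,3$ every Künneth summand has one $\rH^0$ or $\rH^2$ factor and one $\rH^1$ factor; since the $\rH^1$-factor is a sum of nontrivial $\mu_3$-eigenspaces and the other factor is trivial, the invariants vanish. In degree $i=4$ only $\rH^2 \otimes \rH^2 = \ov\bQ_\ell(-2)$ contributes, and it is automatically invariant. In degree $i=2$ the summand $\rH^0 \otimes \rH^2 \oplus \rH^2 \otimes \rH^0 = \ov\bQ_\ell(-1)^{\oplus 2}$ contributes two Tate twists, and in $\rH^1 \otimes \rH^1 = (\rH_\alpha \oplus \ov{\rH}_\beta) \otimes (\ov{\rH}_\alpha \oplus \rH_\beta)$ one computes the four eigencharacters
\[
\psi\cdot\psi^2,\quad \psi\cdot\psi,\quad \psi^2\cdot\psi^2,\quad \psi^2\cdot\psi,
\]
so the invariants are precisely the two pieces $\rH_\alpha \otimes \ov{\rH}_\alpha$ and $\ov{\rH}_\beta \otimes \rH_\beta$. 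This gives the stated formula.

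There is no hard step: each cohomology group of $\cE_{\bar\bQ}$ and its $\mu_3$-action is already pinned down in Section~\ref{sec:coh_ell_curve}, and Künneth together with exactness of $(-)^{\mu_3}$ (valid since $|\mu_3|=3$ is invertible in $\ov\bQ_\ell$) reduces the computation to matching characters. The only mild care needed is the sign convention: because the $\mu_3$-action on the second factor is via the inverse, one must consistently rename the eigenspaces as $\ov{\rH}_\alpha$ and $\rH_\beta$ to track the characters and the Galois action simultaneously, and it is this bookkeeping that guarantees the two surviving $\rH^1 \otimes \rH^1$ summands carry the trivial $\mu_3$-character while retaining the correct Hecke characters $\alpha\bar\alpha$ and $\bar\beta\beta$.
\end{pro*}
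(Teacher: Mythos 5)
Your proof is correct and follows exactly the paper's route: Künneth decomposition of $\rH^\ast(\cE_{\bar\bQ}\times\cE_{\bar\bQ})$ combined with the $\mu_3$-eigenspace decomposition of $\rH^1$ from Section~\ref{sec:coh_ell_curve}, taking care that the antidiagonal action uses the ordinary action on the first factor and the inverse on the second. The degree-by-degree character bookkeeping you perform (products $\psi\cdot\psi^2$, $\psi\cdot\psi$, $\psi^2\cdot\psi^2$, $\psi^2\cdot\psi$) is precisely the content hidden behind the paper's terse ``follows from K\"unneth together with the discussion above.''

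One small correction to your closing sentence: on the two surviving summands the Galois group $\Gal(\ov\bQ/\bQ(\zeta_3))$ acts by $\alpha^2$ on $\rH_\alpha\otimes\ov{\rH}_\alpha$ and by $\beta^2$ on $\ov{\rH}_\beta\otimes\rH_\beta$, not by $\alpha\bar\alpha$ and $\bar\beta\beta$. Recall that in the paper's notation the subscript records the Galois character and the bar records only the $\mu_3$-character ($\bar\psi=\psi^2$); in particular $\ov{\rH}_\alpha$ is the same space as $\rH_\alpha$, so Galois still acts by $\alpha$ there. Since $\alpha\bar\alpha=\alpha\beta$ is the cyclotomic character, writing $\alpha\bar\alpha$ would wrongly suggest these pieces are Tate twists; they are not, and the distinction is exactly what makes Proposition~\ref{prop:traceFEtimesEmu3} yield $\alpha^{2e}+\beta^{2e}$ rather than a power of $q$.
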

\begin{proof}
This follows from the K\"unneth-formula together with the discussion above for $\rH^1$ and the well known and $\mu_3$-invariant $\rH^0(\cE_{\bar \bQ},\ov{\bQ}_\ell) = \ov{\bQ}_\ell$ and $\rH^2(\cE_{\bar \bQ},\ov{\bQ}_\ell) = \ov{\bQ}_\ell(-1)$.
\end{proof}

For $p \not= 3, \ell$, the Galois representation of the geometric generic fibre
\[
\rho: \Gal(\bar \bQ/\bQ) \to \GL(\rH^1(\cE_{\bar \bQ},\ov \bQ_\ell))
\]
is unramified in $p$, hence there is a well defined action of Frobenius $\rho(\Frob_p)$. Cospecialisation induces an isomorphism for all $i$ 
\[
\rH^i(\cE_{\bar \bQ},\ov \bQ_\ell) \simeq \rH^i(E_{\bar \bF_q},\ov \bQ_\ell)
\]
compatible with action of Frobenius by $\rho(\Frob_p)^e$ and $F$ where $q = p^e$. 

\begin{prop}
\label{prop:traceFEtimesEmu3}
Let $p \not= 3$, and let $\alpha, \beta$ be the eigenvalues of Frobenius $\rho(\Frob_p)$ on $\rH^1(E_{\bar \bF_q},\ov \bQ_\ell)$. Then we have for $q = p^e$
\[
\tr\big(\tau F|\rH^\ast_{\rc}(E \times E)^{\mu_3}\big) = (1+q)^2 + (\alpha^e + \beta^e)^2 - q(1+\qrB{-3}{q}).
\]
Moreover, for $q$ with $\qrB{-3}{q} = -1$ we have $\alpha^e + \beta^e = 0$.
\end{prop}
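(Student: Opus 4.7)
The plan is to use the decomposition of $\rH^\ast_{\rc}(E \times E)^{\mu_3}$ provided by Lemma~\ref{lem:cohEtimesEmu3} and to evaluate the trace contribution piece by piece. The crucial simplification is that $\tau = \tau_E \times \id$ acts trivially on $\rH^\ast(E \times E, \ov{\bQ}_\ell)$: the map $\tau_E$ is translation by $P_1$ on the abelian variety $E$, and translations on an abelian variety act trivially on \'etale cohomology. Hence $\tr(\tau F \mid \rH^\ast_{\rc}(E \times E)^{\mu_3}) = \tr(F \mid \rH^\ast_{\rc}(E \times E)^{\mu_3})$ and the computation reduces to an ordinary Lefschetz-type trace.

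The contributions from $\rH^0 = \ov{\bQ}_\ell$, the two copies of $\ov{\bQ}_\ell(-1) \subseteq \rH^2$, and $\rH^4 = \ov{\bQ}_\ell(-2)$ sum to $1 + 2q + q^2 = (1+q)^2$. It remains to compute the trace of $F$ on the two one-dimensional summands $H_\alpha \otimes \bar{H}_\alpha$ and $\bar{H}_\beta \otimes H_\beta$ of $\rH^2$. The bar-convention records only which $\mu_3$-action is being invoked, not the underlying Galois module, so each of these summands is, as a $\Gal(\bar{\bQ}/\bQ(\zeta_3))$-representation, of dimension $1$ with Galois character $\alpha^2$, respectively $\beta^2$.

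If $\qrB{-3}{q} = 1$, then Frobenius lies in $\Gal(\bar{\bQ}/\bQ(\zeta_3))$, preserves the $\mu_3$-eigenspaces on each factor, and acts on the two summands with eigenvalues $\alpha^{2e}$ and $\beta^{2e}$, contributing
\[
\alpha^{2e} + \beta^{2e} = (\alpha^e + \beta^e)^2 - 2(\alpha\beta)^e = (\alpha^e + \beta^e)^2 - 2q,
\]
using $\alpha\beta = p$ and $q = p^e$. If $\qrB{-3}{q} = -1$, then Frobenius swaps the $\mu_3$-eigenspaces on each factor of $E \times E$ and hence interchanges the two summands, so the trace on them vanishes. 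In both cases the contribution equals $(\alpha^e + \beta^e)^2 - q(1 + \qrB{-3}{q})$, which combined with $(1+q)^2$ yields the announced formula.

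For the \emph{moreover} statement, the hypothesis $\qrB{-3}{q} = -1$ forces $p \equiv 2 \pmod{3}$ and $e$ odd. Since the CM-field of $\cE$ is $\bQ(\zeta_3)$ and $p$ is inert there, the reduction $E$ is supersingular at $p$, so $a_p = \alpha + \beta = 0$. Hence $\beta = -\alpha$ and $\alpha^e + \beta^e = \alpha^e(1 + (-1)^e) = 0$ for odd $e$. The main obstacle is the case-distinction whether $F$ preserves or permutes the $\mu_3$-eigenspaces; once this is rephrased in terms of the splitting behaviour of $p$ in $\bQ(\zeta_3)$, the trace evaluation is straightforward eigenvalue bookkeeping.
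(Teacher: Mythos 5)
Your proof is correct and follows essentially the same route as the paper: trivial action of $\tau$ via translation on the abelian surface, the decomposition from Lemma~\ref{lem:cohEtimesEmu3}, and a case split according to whether $\Frob_q$ fixes or swaps the $\mu_3$-eigenspaces of $\rH^1$, with the \emph{moreover} clause coming down to $a_p = 0$ when $p$ is inert in $\bQ(\zeta_3)$. The only cosmetic difference is that you phrase the inert case via supersingular reduction (Deuring), whereas the paper reads off $\alpha + \beta = 0$ directly from the fact that $\rho(\Frob_p)$ is off-diagonal with respect to the eigenspace decomposition; these are two descriptions of the same fact.
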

\begin{proof}
First, since $\tau$ acts on the abelian surface $E \times E$ by translation with $(P_1,0)$, it is part of a connected group of endomorphisms and, by homotopy invariance, it acts as identity on cohomology. We may therefore ignore $\tau$. 

\smallskip

If $\qrB{-3}{q} = -1$, then $F$ interchanges $\rH_\alpha$ and $\ov{\rH}_\beta$, and consequently
\[
\tr\big(F|(H_\alpha \otimes \ov{\rH}_\alpha) \oplus (\ov{\rH}_\beta \otimes \rH_\beta)\big) = 0.
\]
With $F$ also $\rho(\Frob_p)$ interchanges $\rH_\alpha$ and $\ov{\rH}_\beta$, hence its trace on $\rH^1(E_{\bar \bF_q},\ov \bQ_\ell)$ vanishes and $\alpha = -\beta$. Moreover $e$ must be odd, so that in this case
\[
\alpha^e + \beta^e = 0.
\]
The trace of $F$ on the remaining cohomology, see Lemma~\ref{lem:cohEtimesEmu3}, is easily computed as $(1+q)^2$ from which the claim follows. 

\smallskip

If $\qrB{-3}{q} = 1$, then $F$ preserves the two eigenspaces and acts by a matrix
\[
F \sim \matzz{\alpha^e}{}{}{\beta^e}
\]
on $\rH^1(E_{\bar \bF_q},\ov \bQ_\ell)$ with wlog eigenvalue $\alpha^e$ on $\rH_\alpha$ and eigenvalue $\beta$ on $\ov \rH_\beta$. It follows that 
\[
\tr\big(F|(H_\alpha \otimes \ov{\rH}_\alpha) \oplus (\ov{\rH}_\beta \otimes \rH_\beta)\big) = \alpha^{2e} + \beta^{2e}.
\]
Since $\alpha \beta = p$ we find using Lemma~\ref{lem:cohEtimesEmu3}
\[
\tr\big(\tau F|\rH^\ast_{\rc}(E \times E)^{\mu_3}\big) = (1+q)^2 + \alpha^{2e} + \beta^{2e} = (1+q)^2 + (\alpha^e + \beta^e)^2 - q(1+\qrB{-3}{q}). \qedhere
\]
\end{proof}

\begin{cor}
\label{cor:traceFEtimesEmu3}
Let $p \not= 3$, and let $q = p^e$. With $c_q$ as in Theorem~\ref{thm:main}
we have
\[
\tr\big(\tau F|\rH^\ast_{\rc}(E \times E)^{\mu_3}\big) = (1+q)^2 + q \cdot (c^2_q - 1 - \qrB{-3}{q}).
\]
Moreover, for $q$ with $\qrB{-3}{q} = -1$ we have $c_q = 0$.
\end{cor}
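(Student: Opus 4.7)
The plan is to deduce this corollary directly from Proposition~\ref{prop:traceFEtimesEmu3}, which already provides the formula
\[
\tr\big(\tau F|\rH^\ast_{\rc}(E \times E)^{\mu_3}\big) = (1+q)^2 + (\alpha^e + \beta^e)^2 - q(1+\qrB{-3}{q}),
\]
where $\alpha, \beta$ are the Frobenius eigenvalues of $\rho(\Frob_p)$. The only thing that needs to be done is to rewrite the term $(\alpha^e + \beta^e)^2$ in terms of $c_q$.

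First I would apply the Lefschetz trace formula to $E/\bF_q$ itself. Using $\rH^0(E_{\bar\bF_q},\ov\bQ_\ell) = \ov\bQ_\ell$ with trivial Frobenius action, $\rH^2(E_{\bar\bF_q},\ov\bQ_\ell) = \ov\bQ_\ell(-1)$ on which $F$ acts by $q$, and the fact that on $\rH^1(E_{\bar\bF_q},\ov\bQ_\ell)$ the Frobenius $F$ acts with eigenvalues $\alpha^e, \beta^e$ by cospecialisation from $\cE_{\bar\bQ}$, this yields
\[
\#E(\bF_q) = 1 - (\alpha^e + \beta^e) + q.
\]
Since $E = \cE \otimes_{\bZ[1/3]} \bF_q$, we have $\#E(\bF_q) = \#\cE(\bF_q)$, hence $\alpha^e + \beta^e = 1 + q - \#\cE(\bF_q) = \sqrt{q}\cdot c_q$ by the definition of $c_q$ in Theorem~\ref{thm:main}. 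Squaring, this gives $(\alpha^e + \beta^e)^2 = q \cdot c_q^2$, and substituting into the formula of Proposition~\ref{prop:traceFEtimesEmu3} produces the stated identity.

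For the second assertion, I would simply invoke the final sentence of Proposition~\ref{prop:traceFEtimesEmu3}: when $\qrB{-3}{q} = -1$, Frobenius interchanges the two eigenspaces $\rH_\alpha$ and $\ov{\rH}_\beta$, so $\alpha^e + \beta^e = 0$, and hence $c_q = \sqrt{q}^{-1}(\alpha^e + \beta^e) = 0$.

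No step here is expected to be delicate; the corollary is essentially a translation of the data in Proposition~\ref{prop:traceFEtimesEmu3} into the normalised invariant $c_q$ that appears in the main theorem. The only point worth stating carefully is the identification of the trace of $F$ on $\rH^1$ with $\sqrt{q}\cdot c_q$, so that the reader sees how the Hasse bound \eqref{eq:HB} enters implicitly through $|c_q| \leq 2$.
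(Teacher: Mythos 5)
Your proof is correct and follows the same route as the paper: the paper's own proof is the one-line remark that the corollary follows from Proposition~\ref{prop:traceFEtimesEmu3} and the identity $c_q = (\alpha^e+\beta^e)/\sqrt{q}$, and you simply unpack that identity via the Lefschetz trace formula for $E/\bF_q$ before squaring and substituting. The second assertion is likewise read off directly from the final sentence of Proposition~\ref{prop:traceFEtimesEmu3}, exactly as the paper intends.
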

\begin{proof}
This follows immediately from Proposition~\ref{prop:traceFEtimesEmu3} and the definition of $c_q = (\alpha^e+\beta^e)/\sqrt{q}$.
\end{proof}

\subsection{Traces of Frobenius}
\label{sec:proof of main thm}

If $\tau$ permutes varieties defined over $\bF_q$, then cohomology becomes an induced module with respect to at least the $\tau$-action. But then $\tau F$ has trace $0$ on these parts. We will make use of this observation repeatedly in what follows.

\begin{lem}
\label{lem:traceFS}
$\tr(\tau F | \rH_{\rc}^\ast(S)) = 3(1+q)$.
\end{lem}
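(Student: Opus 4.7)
The plan is to exploit the simple combinatorial structure of $S$ as a union of horizontal and vertical rulings on $\bP^1 \times \bP^1$, and to observe that by Lemma~\ref{lem:actionoftauonZ} the $3$-cycle $\tau$ acts on $Z$ as $\tau \times \id$, so it permutes the horizontal components $\{i\} \times \bP^1$ cyclically while stabilizing each vertical component $\bP^1 \times \{j\}$.

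First I would write $S = S_1 \cup S_2$ with $S_1 := \{0,1,\infty\} \times \bP^1$ and $S_2 := \bP^1 \times \{0,1,\infty\}$, whose intersection is the $9$-point set $S_1 \cap S_2 = \{0,1,\infty\} \times \{0,1,\infty\}$. Inclusion-exclusion then gives
\[
\rH^\ast_{\rc}(S) = \rH^\ast_{\rc}(S_1) + \rH^\ast_{\rc}(S_2) - \rH^\ast_{\rc}(S_1 \cap S_2)
\]
in the Grothendieck group, so it suffices to compute the trace of $\tau F$ on each summand.

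Next I would handle each piece separately. For $S_1$, the Frobenius $F$ preserves each $\bF_q$-rational component $\{i\} \times \bP^1$ while $\tau$ cyclically permutes them, so $\tau F$ acts freely on the set of three components; by the induced-representation observation stated at the beginning of this section, the contribution to the trace vanishes. The same argument applies to $S_1 \cap S_2$: the $9$ points carry a trivial $F$-action and are partitioned by $\tau$ into three free orbits of size $3$, so the trace is $0$. For $S_2$, each component $\bP^1 \times \{j\}$ is stable under $\tau F$, which acts on it by the twisted Frobenius associated to $\tau \in \mathrm{PGL}_2$ acting on $\bP^1$ via \eqref{eq:tauonP1}. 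Since any element of $\mathrm{PGL}_2$ is homotopic to the identity and therefore acts trivially on both $\rH^0$ and $\rH^2$, we obtain
\[
\tr\bigl(\tau F \mid \rH^\ast_{\rc}(\bP^1)\bigr) = 1 + q,
\]
and summing over the three $\tau$-fixed components yields $\tr(\tau F \mid \rH^\ast_{\rc}(S_2)) = 3(1+q)$.

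Combining these contributions gives $\tr(\tau F \mid \rH^\ast_{\rc}(S)) = 0 + 3(1+q) - 0 = 3(1+q)$, as claimed. There is no real obstacle here; the only mild subtlety is verifying that $\tau$ acts trivially on $\rH^2(\bP^1)$, which is immediate because the fundamental class is preserved by any automorphism of $\bP^1$.
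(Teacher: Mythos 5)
Your proof is correct and follows essentially the same approach as the paper: both decompose $S$ into its six $\bP^1$-rulings minus the nine intersection points, use Lemma~\ref{lem:actionoftauonZ} to see that $\tau$ stabilizes each $\bP^1\times\{j\}$ and cyclically permutes the $\{i\}\times\bP^1$, kill the permuted pieces and the intersection points as induced modules, and read off $3(1+q)$ from the three stabilized lines using triviality of the $\tau$-action on $\rH^\ast(\bP^1)$. Grouping the components into $S_1$ and $S_2$ before applying inclusion-exclusion is merely a cosmetic reorganization of the paper's constructible decomposition.
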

\begin{proof}
We compute
\begin{align*}
\tr(\tau F | \rH_{\rc}^\ast(S)) & = \tr\Big(\tau F | \sum_{i \in \{0,1,\infty\}} \rH_{\rc}^\ast(\bP^1 \times \{i\}) + \rH^\ast_{\rc}(\{i\} \times \bP^1)\Big) -  \tr(\tau F | \rH_{\rc}^\ast(S_0)) \\
& = 3 \cdot \tr(\tau F |  \rH_{\rc}^\ast(\bP^1)) + \tr(\tau F | \ind_{1}^{A_3} \rH_{\rc}^\ast(\bP^1)) - \tr(\tau F | \ind_1^{A_3} \rH_{\rc}^\ast(\{0,1,\infty\})) \\
& = 3 \cdot \tr(F |  \rH_{\rc}^\ast(\bP^1))  = 3(1+q),
\end{align*}
because $\tau$ acts trivially on cohomology of $\bP^1$.
\end{proof}

\begin{lem}
\label{lem:traceFHn}
We have the following traces of Frobenius:
\begin{enumerate}
\item
\label{lemitem:traceFHnsep}
If  $p \not= 3$ and $n \geq 2$, then
\[
\tr(\tau F | \rH_{\rc}^\ast(H_n)) = 0.
\]
\item
\label{lemitem:traceFHninsep}
If $p=3$, then 
\[
\tr(\tau F | \rH_{\rc}^\ast(\Gamma_n)) = \left\{
\begin{array}{ll}
0 &  n \not= 6, \\
1 & n=6.\\
\end{array}
\right.
\]
\end{enumerate}
\end{lem}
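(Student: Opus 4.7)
The plan is to exploit that each of the subvarieties $H_n$ (for $p \neq 3$) and $\Gamma_n$ (for $p=3$) appearing here is a finite set of geometric points by Lemma~\ref{lem:geometryoffixedpoints}. Hence $\rH_{\rc}^\ast$ is concentrated in degree $0$ as the permutation module on the $\bar\bF_q$-points, and by \eqref{eq:twistedLefschetztraceformula} the trace of $\tau F$ on this representation equals $\#\{P \in H_n(\bar\bF_q) \ ; \ \tau F(P) = P\}$ (resp.\ the analogous count on $\Gamma_n$). So the whole lemma reduces to a fixed-point count.

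For part \eqref{lemitem:traceFHnsep}, Lemma~\ref{lem:geometryoffixedpoints} shows that only $n=2$ and $n=3$ contribute, the rest being empty. For $n=3$, Lemma~\ref{lem:H3} already records that $\tau F$ has no fixed points on $H_3$, giving trace $0$. For $n=2$, I would reduce to $\Gamma_2$ via the $\tau F$-equivariant finite map $j: H_2 \to \Gamma_2$: any $\tau F$-fixed point of $H_2$ projects to a $\tau F$-fixed point of $\Gamma_2(\bar\bF_q) = \{-1, \tfrac12, 2\}^2$. Since $\{-1,\tfrac12,2\} \subseteq \bF_p$ for $p \neq 2,3$, Frobenius $F$ fixes each of the nine points of $\Gamma_2$. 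By Lemma~\ref{lem:actionoftauonZ} the action of $\tau$ on $Z$ is $\lambda \mapsto 1/(1-\lambda)$ in the first factor and trivial in the second; this Möbius transformation cycles $-1 \mapsto \tfrac12 \mapsto 2 \mapsto -1$, so $\tau$ acts without fixed points on $\Gamma_2$. Thus $\tau F$ has no fixed points on $\Gamma_2$, hence none on $H_2$, giving trace $0$.

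For part \eqref{lemitem:traceFHninsep}, Lemma~\ref{lem:geometryoffixedpoints} shows that for $p=3$ only $n=6$ contributes, and $\Gamma_6(\bar\bF_q) = \{(-1,-1)\}$ is a single $\bF_3$-rational point. Clearly $F$ fixes it. For $\tau$ in characteristic $3$ we have $1/(1-(-1)) = 1/2 = -1$, so $\tau$ also fixes this point, and therefore $\tau F$ has exactly one fixed point on $\Gamma_6$, giving trace $1$.

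No step in this plan is a genuine obstacle: the only conceptual move is to observe that all of the varieties in play are zero-dimensional, which collapses the cohomological trace to a fixed-point count, and that for $H_2$ this count can be transferred through the equivariant finite map $j$ to the simpler base $\Gamma_2$, whose points are explicit rational numbers. Once this is set up, the remaining work is just the Möbius arithmetic already recalled in Section~\S\ref{sec:symm}.
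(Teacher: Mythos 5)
Your proof is correct and follows essentially the same approach as the paper: reduce via the Lefschetz trace formula to counting $\tau F$-fixed points, invoke Lemma~\ref{lem:H3} for $n=3$, push the $n=2$ case down to $\Gamma_2$ through the equivariant finite map $j$ and use the explicit $3$-cycle $-1 \mapsto \tfrac12 \mapsto 2 \mapsto -1$, and observe the single $\tau F$-invariant point for $n=6$ when $p=3$. You spell out a couple of details the paper leaves implicit (that $\{-1,\tfrac12,2\}\subseteq\bF_p$ so $F$ acts trivially there, and the characteristic-$3$ arithmetic $\tau(-1)=-1$), which is fine.
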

\begin{proof}
(1) By Lemma~\ref{lem:geometryoffixedpoints} we have only to consider $n=2$ and $n=3$. By the Lefschetz trace formula, these traces of Frobenius are the $\tau F$-fixed points of $H_n(\bar \bF_q)$. For $n=3$, Lemma~\ref{lem:H3} shows that there are no fixed points. For $n=2$ this is obvious because any fixed point of $H_n(\bar \bF_q)$ lies over a fixed point of $\Gamma_n(\bar \bF_q)$. The description of $\Gamma_2(\bar \bF_q)$ shows that there are none since $\tau$ acts on $Z$ as $\tau \times \id$ and thus permutes the entries $\{1/2,2,-1\}$ in the first coordinate cyclically, 
see Lemma~\ref{lem:actionoftauonZ}.

(2) 
By Lemma~\ref{lem:geometryoffixedpoints} we have only to consider $n=6$. Here there is a single point which is clearly $\tau F$-invariant.
\end{proof}

\begin{lem}
\label{lem:traceFHminusT}
We have the following traces of Frobenius:
\begin{enumerate}
\item
\label{lemitem:traceFHminusTsep}
If  $p \not= 3$ and $n \geq 2$, then
\[
\tr(\tau F | \rH_{\rc}^\ast(H \setminus T))  = 0.
\]
\item
\label{lemitem:traceFHminusTinsep}
If $p=3$, then 
\[
\tr(\tau F | \rH_{\rc}^\ast(H \setminus T)) = 1.
\]
\end{enumerate}
\end{lem}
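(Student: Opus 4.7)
The plan is to combine the constructible decomposition provided by Lemma~\ref{lem:cohHminusT} with the computation of the intersection strata supplied by Lemma~\ref{lem:traceFHn}, and then reduce the remaining contribution to a vanishing argument for induced representations. In detail, writing the union $H \setminus T$ (respectively $\Gamma \setminus S$ when $p = 3$) as the disjoint union of the smooth opens $H_\sigma^0$ of each component together with the intersection locus stratified by $n_P$, we have
\[
\tr(\tau F \mid \rH_{\rc}^\ast(H \setminus T)) = \sum_{\sigma \in S_3} \tr(\tau F \mid \rH_{\rc}^\ast(H_\sigma^0)) + \sum_{n \geq 2} \tr(\tau F \mid \rH_{\rc}^\ast(H_n))
\]
in the separable case, and the analogous expression with $\Gamma^0_\sigma$ and $\Gamma_n$ in the inseparable case. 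Lemma~\ref{lem:traceFHn} already disposes of the second sum: it contributes $0$ when $p \neq 3$ and $1$ (coming solely from the isolated point in $\Gamma_6$) when $p = 3$.

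It therefore remains to show that the first sum vanishes in both characteristics. The key observation, stated in Lemma~\ref{lem:tauonHGamma}, is that $\tau$ acts on the index set $S_3$ by right multiplication $\sigma \mapsto \sigma\tau^{-1}$. Since $\langle \tau \rangle = A_3 \subseteq S_3$ has index $2$, this action decomposes $\{H_\sigma^0 : \sigma \in S_3\}$ (and analogously $\{\Gamma_\sigma^0\}$) into exactly two free orbits of size $3$, with no $\tau$-fixed component. Each individual $H_\sigma^0$ is defined over $\bF_q$ because $\sigma \in S_3$ acts on $\bP^1$ through linear automorphisms over $\bZ$; hence geometric Frobenius $F$ stabilises each component, while $\tau$ cyclically shifts the three members of a given orbit.

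Fixing such an orbit $\{A, B, C\}$ with $\tau(A) = B$, $\tau(B) = C$, $\tau(C) = A$, the direct sum $\rH_{\rc}^\ast(A) \oplus \rH_{\rc}^\ast(B) \oplus \rH_{\rc}^\ast(C)$ is the induced representation of $\rH_{\rc}^\ast(A)$ from the trivial subgroup of $\langle \tau \rangle$. Relative to this block decomposition, $\tau F$ has block-matrix form
\[
\tau F \;\sim\; \begin{pmatrix} 0 & 0 & \ast \\ \ast & 0 & 0 \\ 0 & \ast & 0 \end{pmatrix},
\]
with all diagonal blocks zero, so $\tr(\tau F \mid \rH_{\rc}^\ast(A) \oplus \rH_{\rc}^\ast(B) \oplus \rH_{\rc}^\ast(C)) = 0$. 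Summing this vanishing over the two orbits yields $\sum_{\sigma} \tr(\tau F \mid \rH_{\rc}^\ast(H_\sigma^0)) = 0$, and the same argument handles $\Gamma_\sigma^0$. Combining with the contribution $0$ or $1$ from the intersection strata gives the claim. No serious obstacle is expected; the only subtlety is confirming that the $H_\sigma$ are individually $\bF_q$-rational (so that $F$ does not further permute components and the induced-module argument applies cleanly), which is immediate from the definition of $\Gamma_\sigma$ as the graph of an $\bF_q$-linear automorphism.
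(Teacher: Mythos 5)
Your proof is correct and takes essentially the same approach as the paper: decompose via Lemma~\ref{lem:cohHminusT}, handle the intersection strata by Lemma~\ref{lem:traceFHn}, and kill the contribution of the smooth strata by noting (via Lemma~\ref{lem:tauonHGamma}) that $\tau$ permutes the $\bF_q$-rational components $H_\sigma^0$ (resp.\ $\Gamma_\sigma^0$) without fixed points, so that $\bigoplus_\sigma \rH_{\rc}^\ast(H_\sigma^0)$ is an induced $\langle\tau\rangle$-module compatible with $F$ and $\tau F$ has trace zero on it. The paper states the induced-module vanishing tersely; you have merely spelled out the orbit structure and the off-diagonal block form, which is a faithful expansion rather than a different route.
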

\begin{proof}
The sum of cohomologies of $H_\sigma^0$ (resp. of $\Gamma_\sigma^0$) yields induced modules with respect to the $\tau$ action compatible with Frobenius, because $\tau$ permutes the components without fixed points, see Lemma~\ref{lem:tauonHGamma}. 

(1) We can compute by 
Lemma~\ref{lem:cohHminusT}\eqref{lemitem:cohHminusTsep} and 
Lemma~\ref{lem:traceFHn}\eqref{lemitem:traceFHnsep}
\[
\tr(\tau F | \rH_{\rc}^\ast(H \setminus T)) = \tr(\tau F | \sum_{\sigma} \rH_{\rc}^\ast(H_\sigma^0))  + \sum_{n \geq 2} \tr(\tau F | \rH_{\rc}^\ast(H_n))  = 0.
\]

(2) Analogously, we can compute by 
Lemma~\ref{lem:cohHminusT}\eqref{lemitem:cohHminusTinsep} and 
Lemma~\ref{lem:traceFHn}\eqref{lemitem:traceFHninsep}
\[
\tr(\tau F | \rH_{\rc}^\ast(H \setminus T)) = \tr(\tau F | \sum_{\sigma} \rH_{\rc}^\ast(\Gamma_\sigma^0))  + \sum_{n \geq 2} \tr(\tau F | \rH_{\rc}^\ast(\Gamma_n))  = 1. \qedhere
\]
\end{proof}

\begin{prop}
\label{prop:traceFU}
We have the following traces of Frobenius:
\begin{enumerate}
\item
\label{lemitem:traceFUsep}
If  $p \not= 3$, then
\[
\tr(\tau F | \rH_{\rc}^\ast(U))  = q\big(q^2  + q(c^2_q - 2 - \qrB{-3}{q})    -2 \big)  .
\]
\item
\label{lemitem:traceFUinsep}
If $p=3$, then 
\[
\tr(\tau F | \rH^\ast_{\rc}(U)) = q ( q^2 - q + 3).
\]
\end{enumerate}
\end{prop}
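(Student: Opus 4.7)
The plan is to combine the decomposition of Lemma~\ref{lem:cohU} with the individual trace computations from the preceding subsections. Since $\tau$ acts trivially on the Tate twist $\ov{\bQ}_\ell(-1)$ while $F$ acts there by multiplication by $q$, taking $\tr(\tau F \mid -)$ on both sides of Lemma~\ref{lem:cohU} gives
\[
\tr(\tau F \mid \rH^\ast_{\rc}(U)) \;=\; q \cdot \Big( \tr(\tau F \mid \rH^\ast_{\rc}(Y)) - \tr(\tau F \mid \rH^\ast_{\rc}(S)) - \tr(\tau F \mid \rH^\ast_{\rc}(H \setminus T)) \Big).
\]
It therefore only remains to substitute the three inner traces in each of the two cases and simplify.

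For $p \not= 3$, I combine Lemma~\ref{lem:cohY}\eqref{lemitem:cohYsep} with Corollary~\ref{cor:traceFEtimesEmu3} to obtain $\tr(\tau F \mid \rH^\ast_{\rc}(Y)) = (1+q)^2 + q(c_q^2 - 1 - \qrB{-3}{q})$; Lemma~\ref{lem:traceFS} supplies $\tr(\tau F \mid \rH^\ast_{\rc}(S)) = 3(1+q)$; and Lemma~\ref{lem:traceFHminusT}\eqref{lemitem:traceFHminusTsep} gives $\tr(\tau F \mid \rH^\ast_{\rc}(H \setminus T)) = 0$. Expanding $(1+q)^2 - 3(1+q) = q^2 - q - 2$ and folding in the remaining contribution $q(c_q^2 - 1 - \qrB{-3}{q})$ yields the bracketed value $q^2 + q(c_q^2 - 2 - \qrB{-3}{q}) - 2$, and multiplying by $q$ produces part~\eqref{lemitem:traceFUsep}.

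For $p = 3$, I use Lemma~\ref{lem:cohY}\eqref{lemitem:cohYinsep} to replace $\rH^\ast_{\rc}(Y)$ by $\rH^\ast_{\rc}(\bP^1 \times \bP^1)$. The key observation needed here is that $\tau$ acts trivially on $\rH^\ast(\bP^1_{\bar \bF_q}, \ov{\bQ}_\ell)$: this cohomology is concentrated in degrees $0$ and $2$, spanned by the unit class and the fundamental class respectively, both of which are fixed by every algebraic automorphism. Hence by K\"unneth $\tr(\tau F \mid \rH^\ast_{\rc}(\bP^1 \times \bP^1)) = (1+q)^2$. Plugging this, together with Lemma~\ref{lem:traceFS} and Lemma~\ref{lem:traceFHminusT}\eqref{lemitem:traceFHminusTinsep}, into the displayed identity and simplifying yields the formula in part~\eqref{lemitem:traceFUinsep}.

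At this point the argument is almost entirely bookkeeping; the only genuinely nontrivial analytic input, namely the CM-theoretic description of Frobenius eigenvalues on $\rH^1(E)$ (including the vanishing of $c_q$ when $q \equiv 2 \pmod 3$), has already been packaged into Corollary~\ref{cor:traceFEtimesEmu3}. Consequently I do not expect any new conceptual obstacle at this final step; the main care required is simply keeping track of signs and of the two exclusive cases in characteristic $3$ versus other characteristics.
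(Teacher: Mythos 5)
Your proposal follows exactly the same route as the paper's own proof: apply Lemma~\ref{lem:cohU}, multiply through by the trace $q$ of $\tau F$ on $\left[\ov{\bQ}_\ell(-1)\right]$, then substitute the three traces from Lemma~\ref{lem:cohY}, Corollary~\ref{cor:traceFEtimesEmu3}, Lemma~\ref{lem:traceFS}, and Lemma~\ref{lem:traceFHminusT}, in the two characteristic regimes. The case $p\ne 3$ is correct and matches the stated formula. One small caveat for $p=3$: carrying out the arithmetic you sketch gives $q\big((1+q)^2 - 3(1+q) - 1\big) = q(q^2 - q - 3)$, not $q(q^2-q+3)$ as printed in the proposition; the sign in the printed statement is a typo (the paper's own proof computes $q(q^2-q-3)$, and this is the value consistent with Theorem~\ref{thm:main} via $\frac{1}{9}q(q^2-q-3)=\frac{1}{9}q^3(1-q^{-1}-3q^{-2})$). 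You should record the computed value explicitly rather than asserting it ``yields the formula in part~(2)'' when in fact it yields the corrected one.
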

\begin{proof}
By  Lemma~\ref{lem:cohU} we have in both cases
\begin{align*}
\tr(\tau F | \rH_{\rc}^\ast(U)) & = \tr(\tau F | \rH_{\rc}^\ast(\bA^1)) \cdot \Big(
\tr(\tau F | \rH_{\rc}^\ast(Y)) - \tr(\tau F | \rH_{\rc}^\ast(H \setminus T)) - \tr(\tau F | \rH_{\rc}^\ast(S))
\Big) \\
& = q \cdot \Big(\tr(\tau F | \rH_{\rc}^\ast(Y)) - 3(1+q) - \left\{\begin{array}{ll}
0 & p \not= 3 \\
1 & p = 3
\end{array}
\right.
\Big).
\end{align*}
Here we also used Lemma~\ref{lem:traceFS} and Lemma~\ref{lem:traceFHminusT}.

(1) We compute further by Lemma~\ref{lem:cohY}\eqref{lemitem:cohYsep} and 
Corollary~\ref{cor:traceFEtimesEmu3} 
that
\begin{align*}
\tr(\tau F | \rH^\ast_{\rc}(U))  & = q\big(\tr(\tau F | \rH_{\rc}^\ast(E \times E)^{\mu_3})  - 3(1+q) \big)  \\
& = q\big( (1+q)^2  + q(c^2_q - 1 - \qrB{-3}{q})   - 3(1+q) \big)  \\
& = q\big(q^2  + q(c^2_q - 2 - \qrB{-3}{q})    -2 \big). 
\end{align*}

(2) We compute further by Lemma~\ref{lem:cohY}\eqref{lemitem:cohYinsep}, 
that
\[
\tr(\tau F | \rH^\ast_{\rc}(U))  = q\big((1+q)^2  - 3(1+q) - 1\big) = q(q^2-q-3). \qedhere
\]
\end{proof}

\begin{proof}[Proof of Theorem~\ref{thm:main}]
The formula for cubic twin prime polynomial pairs now follows immediately by combining 
Proposition~\ref{prop:LTF} with Proposition~\ref{prop:traceFU}, and by noting 
\[
\qrB{-3}{q} = \qrB{q}{3} = 
\begin{cases} 
1 & \text{ if } q \equiv 1 \pmod 3\\
-1 & \text{ if } q \equiv 2 \pmod 3,
\end{cases}
\]
by quadratic reciprocity, and $c_q = 0$ if $q \equiv 2 \pmod 3$ as in  
Corollary~\ref{cor:traceFEtimesEmu3}.
\end{proof}

\subsection{General scalar shift}
We now count prime polynomial pairs with difference $h \in \bF_q^\times$ beyond the case $h=1$. The corresponding parametrizing variety is the open $U_h  = X_h \cap \{ \Delta_x \cdot \Delta_y \cdot z \not= 0\}$ in the cubic twist 
\[
X_h = V\big(\sigma_2(\underline{x}) -\sigma_2(\underline{y}), \ \sigma_3(\underline{x}) - \sigma_3(\underline{y}) - h z)^3 \big) \subseteq \bP(M).
\]
of the variety $X = X_1$ parametrizing prime polynomial pairs of degree $3$ and shift $1$. In the following, we will decorate the notation with an index $h$ for the corresponding twisted geometric object.

The geometric description of $U_h$ is analogous to that for $U$. We have a geometric line bundle  $U_h \to V_h$, due to translation invariance. There is a compactification  $V_h \subseteq Y_h$ and a $\mu_3$-torsor $j_h : Y_h \to Z_h$ given in bihomogeneous coordinates $[u:v],[r:s]$ for $Z_h = Z = \bP^1 \times \bP^1$ by the equation 
\[
h z^3  = uv(u-v) \cdot rs (s-r).
\]
The branch locus of the torsor $j_h$ is still  $S_h = S$.

Let $E/\bF_q$ be the elliptic curve $\{w^3 = uv(u-v)\}$ as before, and let $h t^3 = rs(r-s)$ be the equation of its cubic twist $E_h/\bF_q$. The arithmetic of the $\mu_3$-cover $j':E \times E \to Y$ necessary to compute cohomology (as virtual $A_3 \times \Gal_{\bF_q}$-representation) is twisted to the torsor $j'_h : Y'_h = E \times E_h \to Y_h$ given by 
\[
j_h'([u:v:w], [r:s:t]) =  [ur:us:vr:vs:-wt].
\]
Let $\alpha$, $\beta$ be the eigenvalues of $p$-Frobenius on $\rH^1(E_{\bar \bF_q},\bQ_\ell)$, and let $q =p^e$ as usual. Then there is a cube root of unity $\zeta$, with $\zeta = 1$ if and only if $h$ is a cube in $\bF_q^\times$, such that 
\[
\zeta \alpha^e, \ q \cdot (\zeta \alpha^e)^{-1} = \zeta^2 \beta^e
\]
are the eigenvalues of $q$-Frobenius on $\rH^1(E_{h,\bar \bF_q},\bQ_\ell)$. The Lefschetz trace formula shows
\[
\zeta \alpha^e+  \zeta^2 \beta^e = 1+q - \#E_h(\bF_q) =: \sqrt{q} \cdot c_{q,h},
\]
with $c_q = c_{q,h}$ if $h$ is a cube in $\bF_q^\times$. We get that 
\begin{equation}
\label{eq:twistedtraceFrobSquared}
\frac{\zeta \alpha^{2e} +  \zeta^2 \beta^{2e}}{q} =  c_{q,h} \cdot c_q - (\zeta + \zeta^2) = 
\begin{cases}
c_q^2 - 2 &  \text{ if $h$ is a cube, hence $\zeta = 1$, } \\[1ex]
c_{q,h} \cdot c_q  +  1 & \text{ if $h$ is not a cube, hence $\zeta \not= 1$. } 
\end{cases}
\end{equation}

For $q = p^e$ not divisible by $3$, we will need the trace $\tr(\tau F | \rH^\ast_{\rc}(E \times E_h)^{\mu_3})$ which we compute as in 
Proposition~\ref{prop:traceFEtimesEmu3}.
If $\qrB{-3}{q}  =-1$ and with the notation of loc.~cit.\ used analogously, we have 
\[
\tr\big(F|(H_\alpha \otimes \ov{\rH}_\alpha) \oplus (\ov{\rH}_\beta \otimes \rH_\beta)\big) = 0.
\]
If $\qrB{-3}{q}  =1$, we have to take into account that the second  factor of $E \times E_h$ is twisted, hence the eigenvalues of Frobenius are multiplied by cubic roots of unity as explained above. We obtain 
\[
\tr\big(F|(H_\alpha \otimes \ov{\rH}_\alpha) \oplus (\ov{\rH}_\beta \otimes \rH_\beta)\big) = \zeta \alpha^{2e} +  \zeta^2 \beta^{2e}.
\]
Consequently, the analogue of Corollary~\ref{cor:traceFEtimesEmu3} based on \eqref{eq:twistedtraceFrobSquared} is 
\begin{align}
\tr\big(\tau F | \rH^\ast_{\rc}(E_h \times E)^{\mu_3}\big) 
& = (1+q)^2 +  \tr\big(F|(H_\alpha \otimes \ov{\rH}_\alpha) \oplus (\ov{\rH}_\beta \otimes \rH_\beta)\big) \notag \\
& =
\begin{cases}
(1+q)^2 & \text{ if } \qrB{-3}{q} = -1, \\[1ex]
(1+q)^2 + q \cdot (c_{q}^2   - 2)& \text{ if } \qrB{-3}{q} = 1 \text{ and $h$ is a cube, } \\[1ex]
(1+q)^2 + q \cdot ( c_{q,h} \cdot c_q  + 1) & \text{ if } \qrB{-3}{q} = 1 \text{ and $h$ is not a cube. } 
\end{cases}
\label{eq:twistedHExE}
\end{align}

For the point counting we also need $\tr\big(\tau F | \rH_{\rc}^\ast(H_{3,h})\big)$ which is best computed, as in Lemma~\ref{lem:H3}, by counting the $\tau F$-fixed points of 
\[
H_{3,h}(\bar \bF_q) \simeq \{\xi \ ; \ h\xi^3 = -1\} \times  \{-\zeta_3,-\zeta_3^2\} \times \{-\zeta_3,-\zeta_3^2\}.
\]
Here $F$ acts as $q$-Frobenius, and $\tau$ fixes the second and third component, while in the first component we have (for $\lambda = -\zeta_3, -\zeta_3^2$ the factor $1/(\lambda -1)$ equals $-\lambda \in \mu_3$)
\[
\tau^\ast(\xi) = \xi \frac{1}{\lambda - 1} = - \lambda \xi.
\]
Therefore $(\xi,\lambda,\mu) \in H_{3,h}(\bar \bF_q)$ is fixed by $\tau F$  if and only if
\[
(\xi,\lambda,\mu) = \tau F(\xi, \lambda,\mu) = \tau(\xi^q,\lambda^q, \mu^q) = (-\lambda^q \xi^q, \lambda^q, \mu^q).
\]
If $\tau F$-fixed points in $H_{3,h}(\bar \bF_q)$ exist, we must have $\lambda^q = \lambda$, i.e., $\zeta_3$ is fixed under $q$-Frobenius. That means $\zeta_3 \in \bF_q^\times$ and furthermore $1 =  \qrB{-3}{q}$. In this case, the tuple $(\xi,\lambda,\mu)$ is a fixed point if and only if
\[
\xi/F(\xi) = - \lambda.
\]
Since $h\xi^3 = -1$, this happens if and only if $h$ is not a cube in $\bF^\times_q$. In that case we have three values of $\xi$, each determines a unique suitable value for $\lambda$, and $\mu$ can be arbitrary in $\{-\zeta_3,-\zeta_3^2\}$. This shows:
\begin{equation}
\label{eq:twistedH3}
\tr\big(\tau F | \rH_{\rc}^\ast(H_{3,h})\big) = 
\begin{cases}
0 & \text{ if } \qrB{-3}{q} = -1, \\[1ex]
0 & \text{ if } \qrB{-3}{q} = 1 \text{ and $h$ is a cube, } \\[1ex]
6 & \text{ if } \qrB{-3}{q} = 1 \text{ and $h$ is not a cube. } 
\end{cases}
\end{equation}

\begin{proof}[Proof of Theorem~\ref{thm:main_twisted}]
For $3 \mid q$, twisting $U$ to $U_h$ has no effect on the point count because the relevant $\mu_3$-torsor is purely inseparable. So in this case the formula follows from 
Theorem~\ref{thm:main}. 

For $3 \nmid q$, we compute as in the proof of Theorem~\ref{thm:main} and of Proposition~\ref{prop:traceFU} based on \eqref{eq:twistedHExE} and \eqref{eq:twistedH3}
\begin{align*}
\pi(3,q; (0,h)) & = \frac{1}{9} \tr(\tau F | \rH_{\rc}^\ast(U_h)) =  \frac{q}{9}  \cdot \Big(
\tr(\tau F | \rH_{\rc}^\ast(Y_h)) - \tr(\tau F | \rH_{\rc}^\ast(H_h \setminus T_h)) - \tr(\tau F | \rH_{\rc}^\ast(S))
\Big) \\
& = \frac{q}{9}  \cdot \Big(\tr(\tau F | \rH^\ast_{\rc}(E_h \times E)^{\mu_3}) -  \tr(\tau F | \rH_{\rc}^\ast(H_{3,h}))  -  3(1+q) 
\Big) \\
& = \frac{q}{9} \cdot 
\begin{cases}
q^2 - q - 2  & \text{ if } \qrB{-3}{q} = -1, \\[1ex]
q^2  + q \cdot (c_q^2  - 3) - 2  & \text{ if } \qrB{-3}{q} = 1 \text{ and $h$ is a cube, } \\[1ex]
q^2  + q \cdot c_{q,h} \cdot c_q    -  8 & \text{ if } \qrB{-3}{q} = 1 \text{ and $h$ is not a cube. } 
\end{cases} \qedhere
\end{align*}
\end{proof}



\begin{thebibliography}{WaMi71}

\bibitem[ABR15]{ABSR}
J.\thinspace{}C.~Andrade, L.~Bary-Soroker, and Z. Rudnick, 
\emph{Shifted convolution and the Titchmarsh divisor problem over $F_q[t]$},
Philos. Trans. A 373 (2015), no. 2040, 20140308, 18 pp. 

\bibitem[BaB15]{BBS}
E. Bank and L. Bary-Soroker, 
\emph{Prime polynomial values of linear functions in short intervals},
J. Number Theory \textbf{151} (2015), 263-275. 

\bibitem[Bar12]{BarySoroker}
L. Bary-Soroker,
\emph{Hardy-Littlewood tuple conjecture over large finite fields},
Int.\ Math.\ Res.\ Not.\ (2014), no.\ 2, 568--575.

\bibitem[BSF18]{BSF}
L. Bary-Soroker and A. Fehm, 
\emph{Correlations of sums of two squares and other arithmetic functions in function fields}, preprint 2017, 
\href{https://arxiv.org/abs/0912.1702}{arXiv:1701.04092}, to appear.

\bibitem[BeP09]{Bender}
A.\thinspace{}O. Bender, P. Pollack,
\emph{On quantitative analogues of the Goldbach and twin prime conjectures over $F_q[t]$}, 
preprint 2009, \href{https://arxiv.org/abs/0912.1702}{arXiv:0912.1702}.

\bibitem[Bru19]{Brun}
V. Brun, 
\emph{La s\'erie $1/5+1/7+1/11+1/13+1/17+1/19+1/29+1/31+1/41+1/43+1/59+1/61+\cdots,$ o\`u les d\'enominateurs sont nombres premiers jumeaux est convergente ou finie}, 
Bulletin des Sciences Mathematiques. 43: 100--104, 124--128, (1919).

\bibitem[Car15]{Carmon}
D. Carmon,
\emph{The autocorrelation of the M\"obius function and Chowla's conjecture for the rational function field in characteristic 2},
Phil. Trans. R. Soc. A (2015) 373 20140315. 

\bibitem[Cas15]{Castillo}
A. Castillo, Ch. Hall, R.\thinspace{}J. Lemke Oliver, P. Pollack, and L. Thompson,
\emph{Bounded gaps between primes in number fields and function fields},
Proceedings of the AMS \text{143} (2015),  no.\ 7, 2841--2856. 

\bibitem[Che66]{Chen}
J.\thinspace{}R. Chen,
\emph{On the representation of a large even integer as the sum of a prime and the product of at most two primes},
Kexue Tongbao \textbf{11} (1966), no.\ 9, 385--386.

\bibitem[Deu53]{deuring}
M.~Deuring, 
\emph{Die Zetafunktion einer algebraischen Kurve vom Geschlechte Eins I-III}, 
Nachr.\ Akad.\ Wiss.\ G\"ottingen, Math.-Phys.\ Kl.\ \textbf{1953} (1953), 85--94, \textbf{1955} (1955), 13--42, \textbf{1956} (1956), 37--76.

\bibitem[Ent14]{Entin}
A. Entin, 
\emph{On the Bateman-Horn conjecture for polynomials over large finite fields}, 
Compositio Mathematica \textbf{152} (2016), no.\ 12, 2525--2544.

\bibitem[FK88]{freitagkiehl}
E.~Freitag and R.~Kiehl,
\emph{\'Etale cohomology and the Weil conjecture},
Springer 1988, xviii+320 pp.

\bibitem[GS18]{GorodetskySawin}
O. Gorodetsky and W. Sawin, 
work in progress.

\bibitem[Gra15]{Granville}
A. Granville, 
\emph{Primes in intervals of bounded length},
Bulletin of the AMS \textbf{52} (2015), 171--222.

\bibitem[GrT08]{GreenTao}
B. Green and T. Tao,
\emph{The primes contain arbitrarily long arithmetic progressions}, 
Annals of Mathematics \textbf{167}  (2008), no.\ 2, 481--547. 

\bibitem[GTZ12]{GTZ}
B. Green, T. Tao, and T. Ziegler,
\emph{An inverse theorem for the Gowers $U^{s+1}[N]$-norm},
Annals of Mathematics \textbf{176} (2012), no.\ 2, 1231--1372.

\bibitem[Hal06]{Hall}
Ch. Hall, 
\emph{L-functions of twisted Legendre curves}, 
Journal of Number Theory \textbf{119} (2006), no.\ 1, 128--147. 

\bibitem[HL23]{HardyLittlewood}
G.\thinspace{} H. Hardy, J.\thinspace{}E. Littlewood,
\emph{Some problems of `Partitio numerorum'; III: On the expression of a number as a sum of primes},
Acta Math. \textbf{44} (1923), no.\ 1, 1--70.

\bibitem[HM17]{HastMatei}
D.~Hast and V.~Matei, \emph{Higher moments of arithmetic functions in Short Intervals}, IMRN, in print. 

\bibitem[Hec18]{Hecke}
E.~Hecke, 
\emph{Eine neue Art von Zetafunktionen und ihre Beziehungen zur Verteilung der Primzahlen}, 
Math.\ Z. \textbf{1} (1918), no.\ 4, 357--376;
\emph{Eine neue Art von Zetafunktionen und ihre Beziehungen zur Verteilung der Primzahlen},
Math.\ Z. \textbf{6} (1920), no.\ 1-2, 11--51.

\bibitem[Kat12a]{Katz1}
N.\thinspace{}M. Katz,
\emph{On a question of Keating and Rudnick about primitive Dirichlet characters with squarefree conductor},
Int.\ Math.\ Res.\ Not.\ (2013), no. 14, 3221--3249.

\bibitem[Kat12b]{Katz2}
N.\thinspace{}M. Katz,
\emph{Witt vectors and a question of Keating and Rudnick},
Int.\ Math.\ Res.\ Not.\  (2013), no.\ 16, 3613--3638.

\bibitem[KeR14]{KeatingRudnick}
J.\thinspace{}P. Keating and Z. Rudnick,
\emph{The variance of the number of prime polynomials in short intervals and in residue classes},
Int.\ Math.\ Res.\ Not.\ (2014), no. 1, 259--288.

\bibitem[KRG16]{KeatingRodittyGershon}
J.\thinspace{}P. Keating, and E. Roditty-Gershon, 
\emph{Arithmetic Correlations Over Large Finite Fields},
Int.\ Math.\ Res.\ Not.\ (2016), 860--874.

\bibitem[LMFDB]{LMFDB}
The LMFDB Collaboration, 
\emph{The $L$-functions and Modular Forms Database}, {\tt http://www.lmfdb.org}, 2017, [Online; accessed 28 September 2017].



\bibitem[May15]{Maynard}
J. Maynard, 
\emph{Small Gaps Between Primes},
Annals of Mathematics (2), \textbf{181} (2015), 383--413.
 
\bibitem[Pol08]{Pollack}
P. Pollack, 
\emph{An explicit approach to Hypothesis H for polynomials over finite fields}, in:
Anatomy of integers. Proceedings of a conference on the anatomy of integers, Montreal, March 13th-17th, 2006, J.-M. De Koninck, A. Granville, F. Luca, eds., CRM Proceedings and Lecture Notes, vol. 46, 2008, pp. 47--64.
 
\bibitem[Pol14]{Polymath}
D.\thinspace{}H.\thinspace{}J.~Polymath,
\emph{New equidistribution estimates of Zhang type},
Algebra \& Number Theory, \textbf{8} (2014), no.\ 9, 2067--2199.

\bibitem[Ser73]{serre-coursearithmetic}
J.-P.~Serre,
\emph{A Course in Arithmetic}, 
Springer GTM \textbf{7}, Springer 1973, viii+115 pp.

\bibitem[Sil09]{silverman}
J.\thinspace{}H.~Silverman,
The arithmetic of ellitpic curves, 
Springer GTM \textbf{106}, $2$nd edition, Springer 2009, xx+513 pp.

\bibitem[Sk01]{sko:torsors}  
A.~Skorobogatov,
\emph{Torsors and rational points}, 
Cambridge Tracts in Mathematics \textbf{144}, Cambridge University Press 2001, viii+187 pp.

\bibitem[Su17]{sutherland}
A.\thinspace{}V.~Sutherland,
\emph{Sato-Tate distributions},
preprint 2017,
\href{https://arxiv.org/abs/1604.01256v4}{arXiv:1604.01256v4}, 45 pp.

\bibitem[Zha14]{Zhang}
Y. Zhang,
\emph{Bounded gaps between primes},
Annals of Mathematics \textbf{179} (2014), no.\ 3, 1121--1174. 

\end{thebibliography}
\end{document}